\numberwithin{equation}{section}
\newtheorem{theorem}{Theorem}[section]
\newtheorem{lemma}[theorem]{{\bf Lemma}}
\newtheorem{coro}[theorem]{{\bf Corollary}}
\newtheorem{definition}[theorem]{Definition}
\newtheorem{remark}[subsection]{Remark}
\newcommand{\Z}{\mbox{$\mathbb Z$}}
\begin{document}

	\title[ Arithmetic density and congruences of $\ell$-regular bipartitions ]{ Arithmetic density and congruences of $\ell$-regular bipartitions} 
	
	\author[N.K. Meher*]{N.K. Meher}
	\address{Nabin Kumar Meher, Department of Mathematics, Indian Institute of Information Raichur, Govt. Engineering College Campus, Yermarus, Raichur, Karnataka, India 584135.}
	\email{mehernabin@gmail.com, nabinmeher@iiitr.ac.in}
	
	\author[ Ankita Jindal]{ Ankita Jindal$^{*}$}
	\address{Ankita Jindal, Indian Statistical Institute Bangalore, 8th Mile, Mysore Road, Bangalore, Karnataka, India 560059}
	\email{ankitajindal1203@gmail.com }

	\thanks{2010 Mathematics Subject Classification: Primary 05A17, 11P83, Secondary 11F11 \\
		Keywords: $\ell$-regular bipartitions; Eta-quotients; Congruence; modular forms; arithmetic density; Hecke-eigen form: Newmann Identity. \\
		$*$ is the corresponding author.}
	\maketitle
	\pagenumbering{arabic}
	\pagestyle{headings}
	\begin{abstract}
		Let $ B_{\ell}(n)$ denote the number of $\ell$-regular bipartitions of $n.$ In this article, we prove that $ B_{\ell}(n)$ is always almost divisible by $p_i^j$ if $p_i^{2a_i}\geq \ell,$ where $j$ is a fixed positive integer and $\ell=p_1^{a_1}p_2^{a_2}\ldots p_m^{a_m},$ where $p_i$ are prime numbers $\geq 5.$ Further, we obtain an infinities families of congruences for $B_3(n)$ and $B_5(n)$ by using Hecke eigen form theory and a result of Newman \cite{Newmann1959}. Furthermore, by applying Radu and Seller's approach, we obtain an algorithm from which we get several congruences for $B_{p}(n)$, where $p$ is a prime number. 
		\end{abstract}
	\maketitle
	
	\section{Introduction}
	For any positive integer $\ell >1,$ a partition of a positive integer is called an $\ell$-regular partition if none of its parts is divisible by $\ell.$ Let $b_{\ell}(n)$ denotes the number of $\ell$-regular partitions of $n.$ The generating function for $b_{\ell}(n)$ is given by 
	$$ \sum_{n=0}^{\infty} b_{\ell}(n) q^n= \frac{f_{\ell}}{f_1},$$
	where $f_{\ell}$ is defined by $f_{\ell}= \prod_{m=1}^{\infty} (1- q^{\ell m}).$
	
	Gordan and Ono \cite{Gordan1997} proved a density result on $\ell$-regular partition function $b_{\ell}(n)$. In particular, they proved that if $p$ is a prime number and $p^{\hbox{ord}_p(\ell)} \geq \sqrt{\ell},$ then for any positive integer $j,$ the arithmetic density of positive integers $n$ such that $b_{\ell}(n) \equiv 0 \pmod{p^j}$ is one. Andrews et al. \cite{Andrews2010} found infinite families of congruences modulo $3$ for $b_4(n).$
	
	A $(k,\ell)$-regular bipartition of $ n$ is a bipartition $(\lambda, \mu)$ of $n$ such that $ \lambda$ is a $k$-regular partition and $\mu$ is an $\ell$-regular partition. 
	An $\ell$-regular bipartition of $n$ is an ordered pair of $\ell$-regular partitions $(\lambda, \mu)$ such that the sum of all of the parts equals $n.$ Let $B_{\ell}(n)$ denote the number of $\ell$-regular bipartitions of $n.$ The generating function of $B_{\ell}(n)$ satisfies 
	
	\begin{align}\label{eq501}
	 \sum_{n=0}^{\infty} B_{\ell}(n) q^n= \frac{f^2_{\ell}}{f_1^2}.
	 \end{align}
 
 In this article, we study the arithmetic density of $B_{\ell}(n)$ and prove new results similar to that of Gordan and Ono for $b_{\ell}(n)$ (see \cite{Gordan1997}).
 
 
 \begin{theorem}\label{mainthm11} For a positive integer $m,$ let $a_1, a_2, \ldots, a_m$ be positive integers. Let $\ell=p_1^{a_1}p_2^{a_2}\ldots p_m^{a_m},$ where $p_i$ are prime numbers $\geq 5.$ If $p_i^{2a_i} \geq \ell,$ then for every positive integer $j,$  the set 
 	$$\left\{0 < n \leq  X: B_{\ell}(n)\equiv 0 \pmod{p_i^j} \right\}$$ 
 has arithmetic density $1.$
 \end{theorem}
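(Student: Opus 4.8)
The plan is to follow the method of Gordon and Ono for $b_\ell(n)$: for each relevant prime power I will realize the generating function, reduced modulo that prime power, as a holomorphic modular form of positive integer weight with integer Fourier coefficients, and then invoke a classical theorem of Serre, which says that the Fourier coefficients of such a form are divisible by any fixed integer on a set of density $1$. Fix an index $i$ and write $p=p_i$, $a=a_i$, so that $p^{a}\,\|\,\ell$ and $\ell=p^{a}m$ with $\gcd(m,p)=1$; the hypothesis $p_i^{2a_i}\ge\ell$ then reads $p^{a}\ge m$. Since $B_\ell(n)\equiv 0 \pmod{p^{j'}}$ forces $B_\ell(n)\equiv 0\pmod{p^{j}}$ whenever $j'\ge j$, the set in the statement only enlarges as $j$ decreases, so it suffices to establish the density assertion for all $j\ge a$.

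Starting from $\sum_n B_\ell(n)q^n=f_\ell^2/f_1^2$ in \eqref{eq501}, I use the classical congruence $f_1^{p^j}\equiv f_p^{p^{j-1}}\pmod{p^j}$ in the dilated form $f_{p^{a-1}}^{p^j}\equiv f_{p^a}^{p^{j-1}}\pmod{p^j}$ (obtained by $q\mapsto q^{p^{a-1}}$), so that $f_{p^{a-1}}^{2p^j}/f_{p^a}^{2p^{j-1}}\equiv 1\pmod{p^j}$. Multiplying by this unit gives
\begin{align*}
\frac{f_\ell^2}{f_1^2}\equiv\frac{f_\ell^2\,f_{p^{a-1}}^{2p^j}}{f_1^2\,f_{p^a}^{2p^{j-1}}}\pmod{p^j}.
\end{align*}
Using $\eta(z)=q^{1/24}f_1$ and the standard $24z$-dilation (which forces all exponents of $q$ to be integral and the Ligozat congruence conditions to hold automatically), I set
\begin{align*}
\mathcal{G}(z)=\frac{\eta(24\ell z)^2\,\eta(24p^{a-1}z)^{2p^j}}{\eta(24z)^2\,\eta(24p^a z)^{2p^{j-1}}}.
\end{align*}
A direct computation shows $\mathcal{G}$ has weight $\tfrac12\big(2+2p^j-2p^{j-1}-2\big)=p^{j-1}(p-1)$, a positive integer, has integer $q$-expansion, and satisfies $\mathcal{G}\equiv\sum_n B_\ell(n)\,q^{24n+2(\ell-1)}\pmod{p^j}$.

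It remains to verify that $\mathcal{G}$ is a \emph{holomorphic} modular form on $\Gamma_0(N)$ with $N=576\,\ell$ and a Dirichlet character, via Ligozat's criterion. The $24z$-scaling makes the two congruence conditions $\sum_\delta \delta r_\delta\equiv\sum_\delta (N/\delta)r_\delta\equiv 0\pmod{24}$ automatic and the weight integral, so the one substantive requirement — and the sole place the hypothesis enters — is nonnegativity of the order of $\mathcal{G}$ at every cusp $c/d$, $d\mid N$, governed by $L_d=\sum_\delta \gcd(d,\delta)^2 r_\delta/\delta$. The dangerous factor is $\eta(24p^a z)^{-2p^{j-1}}$; writing $d=p^{b}d'$ with $0\le b\le a$, one finds that for $b=a$ the divergent powers of $p$ cancel and $L_d$ reduces to $2\big(p^{a}\gcd(d',m)^2/m-1\big)$, whose minimum, at $d'=1$, is nonnegative \emph{exactly} when $p^a\ge m$, i.e. $p^{2a}\ge\ell$. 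For all other cusps ($b\le a-1$) the surviving positive contribution $2p^{\,j-a+2b-1}(p^2-1)$ dominates the constant $-2$ once $j\ge a$, since $p\ge 5$; hence $L_d\ge 0$ throughout. This cusp estimate is the crux of the argument, and it is tuned precisely to return the stated inequality — I expect it to be the main obstacle, since everything hinges on holomorphy at the cusps with $p^a\mid d$.

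With $\mathcal{G}$ a holomorphic modular form of positive integer weight and integer coefficients, Serre's theorem yields that the set of $M$ with $[q^M]\mathcal{G}\not\equiv 0\pmod{p^j}$ has size $o(X)$ up to $X$. In particular the bad indices lying in the single arithmetic progression $M=24n+2(\ell-1)$ also number $o(X)$, so, since $\mathcal{G}\equiv\sum_n B_\ell(n)q^{24n+2(\ell-1)}\pmod{p^j}$, the set of $n\le X$ with $B_\ell(n)\not\equiv 0\pmod{p^j}$ has density $0$. Thus $B_\ell(n)\equiv 0\pmod{p^j}$ on a density-$1$ set for every $j\ge a$, and by the inclusion noted at the outset the same holds for every positive integer $j$, which completes the proof.
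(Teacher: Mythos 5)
Your proof is correct, and it follows the paper's overall strategy --- the Gordon--Ono method of multiplying the generating function by an eta-quotient congruent to $1$, checking holomorphy at all cusps via the Ligozat/Ono criterion, and finishing with Serre's density theorem --- but the auxiliary multiplier you choose is genuinely different, and this changes the shape of the argument. The paper multiplies $\eta^{2}(24\ell z)/\eta^{2}(24z)$ by
\begin{equation*}
A_i^{p_i^{j}}(z)=\left(\frac{\eta(24z)^{p^{a}}}{\eta(24p^{a}z)}\right)^{p^{j}}\equiv 1 \pmod{p^{j+1}},
\end{equation*}
obtaining a form of weight $\tfrac12\,p^{j}\left(p^{a}-1\right)$ which is holomorphic at every cusp for \emph{every} $j\geq 1$: the large positive exponent $p^{a+j}-2$ on $\eta(24z)$ makes the cusps with $\mathrm{ord}_p(d)<a$ harmless unconditionally, and the hypothesis $p^{2a}\geq \ell$ is invoked only at the cusps with $\mathrm{ord}_p(d)=a$ --- exactly the same place it appears in your computation. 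You instead multiply by
\begin{equation*}
\left(\frac{\eta(24p^{a-1}z)^{p^{j}}}{\eta(24p^{a}z)^{p^{j-1}}}\right)^{2}\equiv 1\pmod{p^{j}},
\end{equation*}
which yields the much smaller weight $p^{j-1}(p-1)$, but your form fails to be holomorphic at the cusps with $\mathrm{ord}_p(d)<a$ when $j$ is small; your preliminary reduction to $j\geq a$ (using that divisibility by $p^{j'}$ implies divisibility by $p^{j}$ for $j'\geq j$) is valid and exactly compensates for this, and your cusp estimates in both cases ($\mathrm{ord}_p(d)=a$ and $\mathrm{ord}_p(d)\leq a-1$ with $j\geq a$) check out. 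Net comparison: your variant buys a form of much lower weight (which would matter if one wanted effective Sturm-type bounds), at the price of an extra case reduction that the paper's choice of multiplier avoids entirely; both arguments are otherwise the same pipeline and locate the role of the hypothesis $p_i^{2a_i}\geq\ell$ at the identical cusps.
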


 We have the following immediate result as a particular case of the above theorem.
 \begin{coro}\label{coro11}
 	Let $j$ be a positive integer and $ p \geq 5$ be a prime number. Then $B_p(n)$ is almost always divisible by $p^j,$ namely
 	$$ \lim\limits_{X \to \infty} \frac{\# \left\{0 < n \leq X: B_p(n) \equiv 0 \pmod{p^j}\right\}}{X} =1.$$ 	
 \end{coro}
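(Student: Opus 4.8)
The plan is to obtain the corollary as the special case $m=1$, $p_1=p$, $a_1=1$ of Theorem~\ref{mainthm11}. With these choices one has $\ell=p_1^{a_1}=p$, and the arithmetic hypothesis $p_1^{2a_1}\ge\ell$ becomes $p^2\ge p$, which is satisfied by every prime $p\ge 5$ (indeed by every $p\ge 1$). Hence all hypotheses of Theorem~\ref{mainthm11} hold for the single prime $p$, and its conclusion reads
\[
\lim_{X\to\infty}\frac{\#\{0<n\le X:\ B_p(n)\equiv 0\pmod{p^j}\}}{X}=1,
\]
which is exactly the assertion to be proved. So at the formal level no argument beyond Theorem~\ref{mainthm11} is needed.

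It is worth recording how I would prove the statement directly, since the corollary is the cleanest instance of the mechanism behind Theorem~\ref{mainthm11}. Starting from $\sum_{n\ge 0}B_p(n)q^n=f_p^2/f_1^2$, the first step is to clear the denominator modulo $p^j$ using the Frobenius congruence $f_1^{\,p^j}\equiv f_p^{\,p^{j-1}}\pmod{p^j}$, which follows from $(1-q^n)^p\equiv 1-q^{np}\pmod p$ by raising to the power $p^{j-1}$ and taking the product over $n$. Since $f_1$ is a unit in $\Z[[q]]$, this congruence can be squared and inverted to give
\[
\frac{f_p^2}{f_1^2}\equiv \frac{f_1^{\,2p^j-2}}{f_p^{\,2p^{j-1}-2}}\pmod{p^j}.
\]
Writing $f_k=q^{-k/24}\eta(kz)$, the right-hand side is, up to a power of $q$, the eta-quotient $\eta(z)^{2p^j-2}\eta(pz)^{-(2p^{j-1}-2)}$ of weight $\tfrac12\big((2p^j-2)-(2p^{j-1}-2)\big)=p^{j-1}(p-1)$, a positive even integer because $p$ is odd.

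The second step is to confirm that this eta-quotient, suitably normalized by a power of $q$, is a holomorphic modular form on $\Gamma_0(p)$ with a quadratic character, by checking Ligozat's order-of-vanishing conditions at the cusps. A direct computation gives order $2p-2\ge 0$ at the cusp $\infty$ and a nonnegative quantity dominated by $2p^j-2$ at the cusp $0$, so the form, call it $g$, is holomorphic. By construction its $n$-th Fourier coefficient is congruent to $B_p(n)$ modulo $p^j$. The final step is to invoke Serre's theorem that a holomorphic modular form of positive integer weight with integer Fourier coefficients has, for any fixed modulus, almost all of its coefficients divisible by that modulus; applied with modulus $p^j$ this yields $B_p(n)\equiv 0\pmod{p^j}$ on a set of $n$ of density $1$, exactly as in the method of Gordon and Ono \cite{Gordan1997}.

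The step I expect to be the only genuine difficulty, and the sole place where the arithmetic hypothesis is consumed, is the holomorphy check at the cusp $0$. For a general modulus $\ell=p^a s$ with $\gcd(s,p)=1$ the order of the analogous eta-quotient at $0$ is nonnegative precisely when $p^{2a}\ge\ell$, which is the hypothesis of Theorem~\ref{mainthm11}. In the present corollary $\ell=p$ forces $a=1$ and $s=1$, so this condition collapses to $p^2\ge p$ and holds automatically; thus the corollary presents no real obstruction and follows either by specialization of the theorem or by the self-contained route above.
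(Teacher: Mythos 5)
Your proposal is correct and takes exactly the paper's route: the paper also obtains Corollary~\ref{coro11} as the immediate special case $m=1$, $p_1=p$, $a_1=1$ of Theorem~\ref{mainthm11}, where $\ell=p$ and the hypothesis $p_1^{2a_1}\geq \ell$ reduces to the trivially true $p^2\geq p$. Your supplementary direct sketch is likewise in line with the paper's own proof of that theorem (a Frobenius-type congruence to produce an eta-quotient congruent to the generating function, holomorphy checks at the cusps, and Serre's density theorem), so nothing further is required.
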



 In 2017, Xia and Yao \cite{Xia2017} studied $(k,\ell)$-regular bipartitions and proved that for a postive integer $t$ and a prime $p\geq 5$ with $\left( \frac{-t}{p}\right)= -1,$ we have
\begin{equation}\label{eq001}
 	B_{3,t} \left( p^{2\alpha +1}n+ \frac{(1+t)(p^{2\alpha+2}-1)}{24}\right) \equiv 0 \pmod3
 \end{equation}
 whenever $n, \alpha \geq 0$ with $p \nmid n$. Note that $\left( \frac{-3}{p}\right)= -1$ for a prime $p$ with $p \equiv 5\pmod6$. Hence by taking $t=3$ and prime $p$ with $p \equiv 5\pmod6$, in \eqref{eq001} implies that
 \begin{equation}\label{eq003}
 	B_{3} \left( p^{2\alpha +1}n+ \frac{(p^{2\alpha+2}-1)}{6}\right) \equiv 0 \pmod3
 \end{equation}
 whenever $n, \alpha \geq 0$ with $p \nmid n.$
 
In \cite{Xia2017},  Xia and Yao also proved that for a postive integer $t$ and a prime $p\geq 5$ with $\left( \frac{-3t}{p}\right)= -1,$
 \begin{equation}\label{eq002}
 	B_{5,t} \left( p^{2\alpha +1}n+ \frac{(3+t)(p^{2\alpha+2}-1)}{24}\right) \equiv 0 \pmod5
 \end{equation}
  whenever $n, \alpha \geq 0$ with $p \nmid n$. When we take $t=5$ and prime $ p >5$ with $p \equiv 2\pmod3$ and $\left( \frac{5}{p}\right)= 1,$ then \eqref{eq002} implies that
  \begin{equation}\label{eq004}
  	B_{5} \left( p^{2\alpha +1}n+ \frac{(p^{2\alpha+2}-1)}{3}\right) \equiv 0 \pmod5
  \end{equation} 
whenever $n, \alpha \geq 0$ with $p \nmid n.$

Next, we give new congruence equations for $3$-regular and $5$-regular bipartition functions $B_3(n)$ and $B_5(n)$ which generalises \eqref{eq003} and \eqref{eq004}. We first prove the following infinite families of congruences for $ B_3(n)$ modulo $3$ using the theory of Hecke-eigenforms.
	\begin{theorem}\label{thm8}
	Let $k$ and $n$ be non-negative integers. Let $p_1, p_2,\ldots,p_{k+1}$ be primes such that $p_i \geq 5$ and $p_i \not \equiv 1 \pmod6$ for each $1 \leq i \leq k+1$. For any integer $j \not \equiv 0 \pmod {p_{k+1}},$ we have
	$$B_{3} \left( p_1^2 p_2^2 \cdots p_k^2 p_{k+1}^2 n + \frac{p_1^2 p_2^2 \cdots p_k^2 p_{k+1}\left(6j+ p_{k+1}\right)-1}{6} \right) \equiv 0 \pmod3.$$
\end{theorem}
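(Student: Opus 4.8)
The plan is to reduce the congruence to the vanishing of a single Fourier coefficient of a weight-one Hecke eigenform with complex multiplication. First I would reduce the generating function modulo $3$. Since $(1-q^m)^3 \equiv 1-q^{3m} \pmod 3$, we have $f_1^3 \equiv f_3 \pmod 3$, and therefore
$$\sum_{n=0}^\infty B_3(n) q^n = \frac{f_3^2}{f_1^2} = \frac{f_3^2 f_1}{f_1^3} \equiv \frac{f_3^2 f_1}{f_3} = f_1 f_3 \pmod 3.$$
Writing $q = e^{2\pi i z}$ and using $\eta(z) = q^{1/24} f_1$, $\eta(3z) = q^{1/8} f_3$, one gets $q^{1/6} f_1 f_3 = \eta(z)\eta(3z)$. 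After the rescaling $z \mapsto 6z$ this becomes the genuine integer-exponent cusp form $G(z) := \eta(6z)\eta(18z) = \sum_{N \geq 1} A(N) q^N$, which is supported on exponents $N \equiv 1 \pmod 6$ and satisfies $A(6n+1) \equiv B_3(n) \pmod 3$ for all $n \geq 0$.

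Next I would record the modular nature of $G$. It arises by the substitution $z \mapsto 6z$ from the weight-one CM newform $\eta(z)\eta(3z)$ of level $12$, which has complex multiplication by $\mathbb{Q}(\sqrt{-3})$ and nebentypus $\chi = \left(\tfrac{-3}{\cdot}\right)$; since $6$ is coprime to the level, $G$ remains a simultaneous Hecke eigenform at every prime $p \geq 5$. This is the input I would quote from Newman's recurrences \cite{Newmann1959}. The crucial point is that the eigenvalue $\lambda_p$ vanishes at the primes inert in $\mathbb{Q}(\sqrt{-3})$, i.e. those with $\left(\tfrac{-3}{p}\right) = -1$; for a prime $p \geq 5$ this is exactly $p \equiv 5 \pmod 6$, that is $p \not\equiv 1 \pmod 6$. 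For such a prime the eigenform identity
$$A(pN) = \lambda_p A(N) - \chi(p) A(N/p) = -\chi(p) A(N/p)$$
holds for all $N$, with the convention $A(N/p) := 0$ when $p \nmid N$. In particular $A(pN) = 0$ whenever $p \nmid N$, while $A(p^2 N) = -\chi(p) A(N)$.

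Finally I would carry out the index computation. With $P := (p_1 \cdots p_k)^2$ and $m := p_{k+1} n + j$, a direct calculation shows that the argument $N$ of $B_3$ in the statement satisfies
$$6N + 1 = P\, p_{k+1} \bigl(6m + p_{k+1}\bigr).$$
Because $p_{k+1} \geq 5$ and $j \not\equiv 0 \pmod{p_{k+1}}$, we have $p_{k+1} \nmid m$, hence $p_{k+1} \nmid (6m + p_{k+1})$; as $p_{k+1}$ is also coprime to $P$, the prime $p_{k+1}$ divides $6N+1$ to exactly the first power. Applying the relation above with $p = p_{k+1}$ and with $P(6m + p_{k+1})$ in place of $N$ (this is coprime to $p_{k+1}$) gives $A(6N+1) = 0$, and therefore $B_3(N) \equiv A(6N+1) = 0 \pmod 3$. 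The auxiliary primes $p_1,\dots,p_k$ enter only through the even factor $P$, which the relation $A(p_i^2 N) = -\chi(p_i) A(N)$ strips off without disturbing the vanishing forced by $p_{k+1}$. The main obstacle is the second step: rigorously pinning down the modular data of $G$ — its weight, level, and nebentypus — and justifying both that it is a Hecke eigenform and that $\lambda_p = 0$ precisely at the stated primes; once this CM description is secured, the reduction and the index bookkeeping are routine.
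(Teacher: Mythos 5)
Your overall strategy is the same as the paper's: reduce $B_3(n)$ modulo $3$ to the Fourier coefficients of an eta-product supported on exponents $\equiv 1 \pmod 6$, use the Hecke eigenform property to force the eigenvalue to vanish at primes $p \not\equiv 1 \pmod 6$, and then do the index bookkeeping (vanishing when $p_{k+1}$ divides the index exactly once, plus a multiplicativity relation to strip off the squares $p_i^2$). The only substantive difference is the choice of form: the paper reduces $f_3^2/f_1^2 \equiv f_1^4 \pmod 3$ and works with the weight-two eigenform $\eta^4(6z) \in S_2(\Gamma_0(36),\chi)$, whereas you reduce to $f_1 f_3$ and work with the weight-one form $\eta(6z)\eta(18z)$; since $f_3 \equiv f_1^3 \pmod 3$ these two forms have congruent coefficients modulo $3$, so both reductions are valid (the paper itself uses your $f_1f_3$ model, but only in its proof of Theorem \ref{Newmann1}). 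Note also that the CM input you single out as the main obstacle can be bypassed exactly as the paper does: once the eigenform property is known (both $\eta^4(6z)$ and $\eta(6z)\eta(18z)$ occur in the classification of multiplicative eta-quotients \cite{Martin1996} that the paper cites), putting $N=1$ in the eigenform relation gives $\lambda_p = A(p)$, which vanishes for $p \not\equiv 1 \pmod 6$ purely because of the support condition; no appeal to inert primes or Hecke characters is required.

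Two steps as written need repair. First, the modular data: $\eta(z)\eta(3z)$ is not a classical newform of level $12$ --- its expansion is in powers of $q^{1/6}$, so it is modular only with a multiplier system --- and your justification that ``$6$ is coprime to the level'' is false, since $\gcd(6,12)=6$. The clean statement is that $G(z)=\eta(6z)\eta(18z)$ itself satisfies the hypotheses of Theorem \ref{thm2.3} with $N=108$ (both $6+18$ and $\tfrac{108}{6}+\tfrac{108}{18}$ equal $24$), giving a weight-one cusp form with nebentypus $\left(\frac{-3}{\cdot}\right)$, and it is an eigenform for $T_p$ at every prime $p\geq 5$ (all of which are coprime to $108$). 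Second, and more seriously, you assert that $p_{k+1}$ is coprime to $P=(p_1\cdots p_k)^2$, hence that $p_{k+1}$ divides $6N+1$ to exactly the first power. The theorem does not assume the primes are distinct --- Corollary \ref{coro3}, which the paper deduces from this very theorem, takes $p_1=\cdots=p_{k+1}$ --- so this claim can fail, and with it the step ``apply the relation with $P(6m+p_{k+1})$ in place of $N$.'' The cure is the order of operations your closing sentence only gestures at: the relation $A(p^2X)=-\chi(p)A(X)$ holds for \emph{every} positive integer $X$ (substitute $pX$ into the two-term recurrence), so one first strips all the squares $p_1^2,\ldots,p_k^2$, reducing $A(6N+1)$ to a unit multiple of $A\left(p_{k+1}(6m+p_{k+1})\right)$, and only then invokes $A(p_{k+1}Y)=-\chi(p_{k+1})A(Y/p_{k+1})=0$ with $Y=6m+p_{k+1}$, which is genuinely prime to $p_{k+1}$. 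This is precisely how the paper iterates \eqref{eq811} before applying \eqref{eq808}. With these two corrections your argument is complete.
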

By taking $p_1= p_2= \cdots= p_{k+1}= p$ in Theorem \ref{thm8}, we obtain the following corollary. 
\begin{coro}\label{coro3}
	Let $k$ and $n$ be non-negative integers. Let $p \geq 5$ be a prime with $p \equiv 5 \pmod6$. For any integer $j \not \equiv 0 \pmod p,$ we have
	$$B_{3}\left(p^{2k+2}n +p^{2k+1}j+ \frac{p^{2k+2}-1}{6} \right) \equiv 0 \pmod 3.$$
\end{coro}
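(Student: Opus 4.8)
The plan is to obtain Corollary \ref{coro3} as the diagonal case of Theorem \ref{thm8}, so the whole argument reduces to a specialization together with an elementary verification of the hypotheses and a simplification of the arithmetic progression. First I would check that the single-prime hypothesis of the corollary is compatible with the hypotheses of the theorem. For a prime $p \geq 5$ the residues $0, 2, 3, 4 \pmod 6$ are all excluded, since $p$ is odd and coprime to $3$; hence $p \equiv 1$ or $p \equiv 5 \pmod 6$, and the assumption $p \equiv 5 \pmod 6$ is exactly the condition $p \not\equiv 1 \pmod 6$ demanded in Theorem \ref{thm8}. This licenses the choice $p_1 = p_2 = \cdots = p_{k+1} = p$.

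Next I would substitute $p_1 = \cdots = p_{k+1} = p$ into the congruence of Theorem \ref{thm8} and simplify. The leading coefficient becomes $p_1^2 \cdots p_{k+1}^2 = p^{2k+2}$, while the shift collapses as
\[
\frac{p_1^2 \cdots p_k^2\, p_{k+1}\bigl(6j + p_{k+1}\bigr) - 1}{6} = \frac{p^{2k}\cdot p\,(6j+p) - 1}{6} = \frac{6 p^{2k+1} j + p^{2k+2} - 1}{6} = p^{2k+1} j + \frac{p^{2k+2} - 1}{6},
\]
which is precisely the argument of $B_3$ appearing in Corollary \ref{coro3}. Correspondingly, the constraint $j \not\equiv 0 \pmod{p_{k+1}}$ specializes to $j \not\equiv 0 \pmod p$, matching the hypothesis of the corollary exactly.

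The only point deserving a word of justification is that $\tfrac{p^{2k+2}-1}{6}$ is a nonnegative integer, so that the argument of $B_3$ is a legitimate index. Since $p \equiv -1 \pmod 6$, we have $p^2 \equiv 1 \pmod 6$, whence $p^{2k+2} = (p^2)^{k+1} \equiv 1 \pmod 6$ and thus $6 \mid p^{2k+2} - 1$; this integrality is in any case already implicit in the statement of Theorem \ref{thm8}. With these observations the congruence of Corollary \ref{coro3} follows immediately. I do not anticipate any genuine obstacle here beyond the bookkeeping above, as the substantive content resides entirely in Theorem \ref{thm8}, from which the corollary is a direct consequence.
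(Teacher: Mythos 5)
Your proof is correct and coincides with the paper's own derivation: the paper obtains Corollary \ref{coro3} precisely by setting $p_1 = p_2 = \cdots = p_{k+1} = p$ in Theorem \ref{thm8}, and your simplification of the shift to $p^{2k+1}j + \frac{p^{2k+2}-1}{6}$ together with the check that $p \equiv 5 \pmod 6$ is equivalent to $p \not\equiv 1 \pmod 6$ for primes $p \geq 5$ is exactly the required bookkeeping.
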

For instance, if we substitute $p=5,$ $j\not \equiv 0 \pmod 5$  and $k=0$ in Corollary \ref{coro3}, then we get
	$$B_{3}\left(25n +5j+ 4 \right) \equiv 0 \pmod 3.$$	
\begin{remark}
Note that \eqref{eq003} can be obtained as a particular case of Corollary \ref{coro3} by taking $n=0$. 
\end{remark}

 Next, by applying an identity due to Newman \cite{Newmann1959}, we obtain the following theorem.
\begin{theorem}\label{Newmann1}
Let $k$ be a non-negative integer. Let $p$ be a prime number with $p \equiv 1 \pmod 6.$ If $B_{3}\left(\frac{p-1}{6}\right) \equiv 0 \pmod 3,$ then for $n \geq 0$ satisfying $p \nmid (6n+1)$, we have 
	\begin{equation}\label{eq49}
		B_{3}\left(p^{2k+1}n+ \frac{p^{2k +1}-1}{6}\right) \equiv 0 \pmod 3.
	\end{equation}
\end{theorem}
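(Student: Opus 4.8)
The plan is to reduce the stated congruence, which lives modulo $3$, to a divisibility statement about the Fourier coefficients of a single weight-one Hecke eigenform. First I would replace $B_3(n)$ by a cleaner eta-quotient modulo $3$. Since $(1-q^m)^3 \equiv 1-q^{3m}\pmod 3$ we have $f_1^3 \equiv f_3 \pmod 3$, and hence, using \eqref{eq501},
\begin{align*}
\sum_{n=0}^{\infty} B_3(n)\,q^n = \frac{f_3^2}{f_1^2} = \frac{f_3^2 f_1}{f_1^3} \equiv \frac{f_3^2 f_1}{f_3} = f_1 f_3 \pmod 3 .
\end{align*}
Thus $B_3(n)$ is congruent modulo $3$ to the coefficient of $q^n$ in $f_1 f_3$. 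I would then realize $f_1 f_3$ as a rescaled eta-quotient by setting
\begin{align*}
F(\tau) := \eta(6\tau)\eta(18\tau) = q\prod_{m=1}^{\infty}(1-q^{6m})(1-q^{18m}) = \sum_{N=1}^{\infty} c(N)\,q^N .
\end{align*}
Collecting exponents shows $c(N)=0$ unless $N\equiv 1\pmod 6$, and $c(6M+1)$ equals the coefficient of $q^M$ in $f_1 f_3$, so $c(6M+1)\equiv B_3(M)\pmod 3$. Since $6\cdot\frac{p-1}{6}+1=p$, the hypothesis $B_3\!\left(\frac{p-1}{6}\right)\equiv 0\pmod 3$ becomes $c(p)\equiv 0\pmod 3$; and since $6\left(p^{2k+1}n+\frac{p^{2k+1}-1}{6}\right)+1 = p^{2k+1}(6n+1)$, the quantity in \eqref{eq49} equals $c\!\left(p^{2k+1}(6n+1)\right)$ modulo $3$.

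Next I would bring in the structural input: $F$ is a normalized weight-one Hecke eigenform on $\Gamma_0(108)$ with nebentypus the quadratic character $\chi_{-3}=\left(\tfrac{-3}{\cdot}\right)$ (it is the CM form attached to $\mathbb{Q}(\sqrt{-3})$). Newman's identity \cite{Newmann1959} then supplies, for primes $p\ge 5$ and $r\ge 1$, the recursion
\begin{align*}
c(p^{r+1}) = c(p)\,c(p^{r}) - \chi_{-3}(p)\,c(p^{r-1}),
\end{align*}
together with multiplicativity $c(ab)=c(a)c(b)$ for $\gcd(a,b)=1$. Because $p\equiv 1\pmod 6$ forces $\chi_{-3}(p)=1$, and $c(p)\equiv 0\pmod 3$ by hypothesis, the recursion collapses modulo $3$ to $c(p^{r+1})\equiv -\,c(p^{r-1})\pmod 3$. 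Starting from $c(p)\equiv 0$, a one-line induction gives $c(p^{r})\equiv 0\pmod 3$ for every odd $r$; in particular $c(p^{2k+1})\equiv 0\pmod 3$.

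Finally, since $p\nmid(6n+1)$ we have $\gcd\!\left(p^{2k+1},6n+1\right)=1$, so multiplicativity yields $c\!\left(p^{2k+1}(6n+1)\right)=c(p^{2k+1})\,c(6n+1)\equiv 0\pmod 3$, which is exactly \eqref{eq49}. I expect the main obstacle to be the two facts I am treating as black boxes rather than the final computation: verifying that $\eta(6\tau)\eta(18\tau)$ is genuinely a normalized Hecke eigenform, so that Newman's identity and multiplicativity are legitimate, and identifying its nebentypus so that $\chi_{-3}(p)=+1$ precisely when $p\equiv 1\pmod 6$. It is this sign that makes the odd prime-power coefficients vanish modulo $3$ rather than merely stabilize, and so it is the crux on which the whole argument turns; once it is secured the conclusion follows from the elementary recursion above.
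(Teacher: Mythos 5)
Your proposal is correct and follows essentially the same route as the paper: both reduce $B_3(n)$ modulo $3$ to the coefficients of $f_1 f_3$ (equivalently of $\eta(6\tau)\eta(18\tau)$ after rescaling), and both rest on Newman's Theorem 3, which is exactly the Hecke recursion you quote once one makes the change of index $N = 6n+1$. The only difference is packaging: the paper derives the needed instance of multiplicativity inline, by substituting $n \mapsto pn + \frac{p-1}{6}$ into Newman's identity and inducting, whereas you invoke eigenform multiplicativity as a black box --- the underlying computation is identical.
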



	Furthermore, we prove the following multiplicative formulae for $3$-regular bipartitions modulo $3$.
	\begin{theorem}\label{thm9}
		Let $k$ be a positive integer and $p$ be a prime number such that $ p \equiv 5 \pmod 6.$ Let $r$ be a non-negative integer such that $p$ divides $6r+5$. We have
		\begin{align*}
			B_{3}\left(p^{k+1}n+pr+ \frac{5p-1}{6}\right)\equiv  (-p) \cdot  B_{3}\left(p^{k-1}n+ \frac{6r+5-p}{6p} \right) \pmod3.
		\end{align*}
	\end{theorem}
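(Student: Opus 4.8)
The plan is to realize $B_3(n) \bmod 3$ through the coefficients of a weight-$2$ eta-quotient that is a Hecke eigenform, and then extract the multiplicative relation from the action of the Hecke operator $T_p$, exactly in the spirit of the proof of Theorem \ref{thm8}. First I would reduce the generating function modulo $3$. Since $(1-q^m)^3 \equiv 1-q^{3m} \pmod 3$, we have $f_1^3 \equiv f_3 \pmod 3$, whence by \eqref{eq501}
\[
\sum_{n=0}^{\infty} B_3(n) q^n = \frac{f_3^2}{f_1^2} \equiv \frac{f_1^6}{f_1^2} = f_1^4 \pmod 3 .
\]
Replacing $q$ by $q^6$ and multiplying by $q$ gives, with $\eta$ the Dedekind eta function,
\[
F(z) := \eta(6z)^4 = q \prod_{m=1}^{\infty} (1-q^{6m})^4 \equiv \sum_{n=0}^{\infty} B_3(n) q^{6n+1} \pmod 3 .
\]
Writing $F(z) = \sum_{N \geq 1} c(N) q^N$, this says $c(6n+1) \equiv B_3(n) \pmod 3$ and $c(N) \equiv 0 \pmod 3$ whenever $N \not\equiv 1 \pmod 6$.

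Next I would invoke the modularity of $F$. By Ligozat's criteria $F \in S_2(\Gamma_0(36))$ with trivial character, and since $\dim S_2(\Gamma_0(36)) = 1$, the normalized form $F$ is a Hecke eigenform; thus $F \mid T_p = c(p) F$ for every prime $p \nmid 36$, which yields the Newman-type recurrence
\[
c(pN) + p\, c(N/p) = c(p)\, c(N) \qquad (N \geq 1),
\]
with the convention $c(N/p) = 0$ when $p \nmid N$. Here is the observation specific to our primes: if $p \equiv 5 \pmod 6$ then $p \not\equiv 1 \pmod 6$, so by the previous paragraph $c(p) \equiv 0 \pmod 3$. Since the $c(N)$ are integers, reducing the recurrence modulo $3$ gives
\[
c(pN) \equiv -p\, c(N/p) \pmod 3 .
\]

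Finally I would carry out the index bookkeeping. Set $M = p^{k+1}n + pr + \tfrac{5p-1}{6}$ and $N_0 = 6p^k n + 6r + 5$, so that $6M+1 = pN_0$. Since $k \geq 1$ and $p \mid (6r+5)$ by hypothesis, we get $p \mid N_0$, and a short computation shows $N_0/p = 6m'+1$ with $m' = p^{k-1}n + \tfrac{6r+5-p}{6p}$; here one checks, using that both $p$ and $6r+5$ are $\equiv -1 \pmod 6$, that $(6r+5)/p \equiv 1 \pmod 6$, so $m'$ is a nonnegative integer. Applying the displayed congruence with $N = N_0$ and using $c(6M+1) \equiv B_3(M)$ together with $c(6m'+1) \equiv B_3(m')$, we obtain
\[
B_3(M) \equiv c(pN_0) \equiv -p\, c(N_0/p) \equiv (-p)\, B_3(m') \pmod 3 ,
\]
which is precisely the asserted identity. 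The main obstacle is the input that $F$ is a Hecke eigenform, i.e.\ the verification that $\eta(6z)^4$ is the unique normalized cusp form in $S_2(\Gamma_0(36))$; this is already available from the proof of Theorem \ref{thm8}, so the remaining work is only the elementary but careful arithmetic of the arguments and the divisibility checks.
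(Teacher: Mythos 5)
Your proof is correct and follows essentially the same route as the paper: both realize $B_3(n) \bmod 3$ through the coefficients of the Hecke eigenform $\eta(6z)^4$, use the vanishing of its $p$-th coefficient for $p \equiv 5 \pmod 6$ to obtain the relation $c(pN) \equiv -p\, c(N/p) \pmod 3$, and conclude by the same index bookkeeping. The only cosmetic difference is that you justify the eigenform property via $\dim S_2(\Gamma_0(36))=1$, whereas the paper cites Martin's classification of multiplicative eta-quotients.
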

	\begin{coro}\label{coro4}
		Let $k$ be a positive integer and $p$ be a prime number such that $p \equiv 5 \pmod 6$. For $n\geq 0$, we have
		\begin{align*}
			B_{3}\left(p^{2k}n+ \frac{p^{2k}-1}{6} \right)&\equiv  \left(-p\right)^k \cdot  B_{3}(n) \pmod3.
		\end{align*}
	\end{coro}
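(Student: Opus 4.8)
The plan is to prove Corollary \ref{coro4} by induction on $k$, using Theorem \ref{thm9} as the sole engine of the argument. The case $k=0$ is the trivial identity $B_3(n) \equiv B_3(n) \pmod 3$, which I would take as the base of the induction, so the entire content lies in the inductive step from $k-1$ to $k$ for $k \geq 1$.

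For the inductive step I would invoke Theorem \ref{thm9} with the parameter denoted $k$ in its statement (which I rename $K$ to avoid a clash with the corollary's $k$) set to $K = 2k-1$, so that $K+1 = 2k$ and $K-1 = 2(k-1)$; note $K \geq 1$, so the hypothesis that this parameter be a positive integer is met. The crucial choice is the residue parameter, for which I would take $r = \frac{p^{2k-1}-5}{6}$. Two verifications make this legitimate. First, $r$ must be a non-negative integer: since $p \equiv 5 \equiv -1 \pmod 6$ we have $p^{2k-1} \equiv (-1)^{2k-1} \equiv 5 \pmod 6$, whence $6 \mid (p^{2k-1}-5)$ and $r$ is a non-negative integer. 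Second, the divisibility hypothesis $p \mid 6r+5$ holds because $6r+5 = p^{2k-1}$. With this $r$, the left-hand argument of Theorem \ref{thm9} simplifies as $p^{2k}n + pr + \frac{5p-1}{6} = p^{2k}n + \frac{p^{2k}-1}{6}$, while the shifted right-hand argument becomes $\frac{6r+5-p}{6p} = \frac{p^{2k-1}-p}{6p} = \frac{p^{2(k-1)}-1}{6}$.

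Consequently Theorem \ref{thm9} yields
$$B_3\!\left(p^{2k}n + \frac{p^{2k}-1}{6}\right) \equiv (-p)\, B_3\!\left(p^{2(k-1)}n + \frac{p^{2(k-1)}-1}{6}\right) \pmod 3,$$
and applying the induction hypothesis to the right-hand side replaces it by $(-p)^{k-1} B_3(n)$; multiplying through gives $(-p)^k B_3(n)$, which closes the induction. I do not anticipate a serious obstacle: the substance is already packaged in Theorem \ref{thm9}, and the corollary amounts to telescoping its conclusion. The only points demanding care are the two arithmetic simplifications above—checking that the chosen $r$ collapses the left argument to the desired form $\frac{p^{2k}-1}{6}$ and that the right argument is \emph{exactly} the level-$(k-1)$ argument, so that the induction hypothesis applies verbatim—and confirming via $p \equiv -1 \pmod 6$ that $r$ is a genuine non-negative integer rather than merely a rational number. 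This last step is the one place where the hypothesis $p \equiv 5 \pmod 6$ is essential.
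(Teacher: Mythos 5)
Your proof is correct and follows essentially the same route as the paper: the paper also applies Theorem \ref{thm9} with $k$ replaced by $2k-1$ and $r$ chosen so that $6r+5=p^{2k-1}$, then iterates down to $B_3(n)$. Your version merely makes explicit the induction and the arithmetic checks (integrality of $r$ and the simplification of both arguments) that the paper leaves implicit in its ``$\equiv\cdots\equiv$'' step.
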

 Next, we prove the following Ramanujan type congruences for $B_5(n).$ We obtain the following infinite families of congruences for $B_5(n)$ by using Hecke eigenform theory.
		\begin{theorem}\label{thm10}
	Let $k$ and $n$ be non-negative integers. Let $p_1, p_2,\ldots,p_{k+1}$ be primes such that $p_i \equiv 2 \pmod3$ for each $1 \leq i \leq k+1$. For any integer $j \not \equiv 0 \pmod {p_{k+1}},$ we have
	$$B_{5} \left( p_1^2 p_2^2 \cdots p_k^2 p_{k+1}^2 n + \frac{p_1^2 p_2^2 \cdots p_k^2 p_{k+1}\left(3j+ p_{k+1}\right)-1}{3} \right) \equiv 0 \pmod5.$$
\end{theorem}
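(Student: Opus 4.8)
The plan is to adapt the proof of Theorem \ref{thm8} to $B_5$, replacing the weight-$2$ CM form used there by a weight-$4$ eta-quotient. First I would reduce the generating function modulo $5$. Since $(1-q^m)^5\equiv 1-q^{5m}\pmod 5$, we have $f_1^5\equiv f_5\pmod 5$, so by \eqref{eq501}
$$\sum_{n\ge 0}B_5(n)q^n=\frac{f_5^2}{f_1^2}\equiv\frac{f_1^{10}}{f_1^2}=f_1^8\pmod 5.$$
Next I would introduce the eta-quotient $G(z):=\eta(3z)^8=q\prod_{n\ge1}(1-q^{3n})^8$ and verify, via the Ligozat/Gordon--Hughes criteria for eta-quotients, that $G\in S_4(\Gamma_0(9))$: with $N=9$, $\delta=3$, $r_3=8$ one has $\sum_\delta\delta r_\delta=\sum_\delta (N/\delta)r_\delta=24\equiv0\pmod{24}$, the nebentypus is trivial because $3^8$ is a perfect square, the weight is $\tfrac12\cdot 8=4$, and the order of vanishing is positive at each of the four cusps of $\Gamma_0(9)$. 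Writing $G(z)=\sum_{n\ge1}a(n)q^n$, the shape $q\prod(1-q^{3n})^8$ forces $a(n)=0$ unless $n\equiv1\pmod3$ and gives the exact identity $a(3m+1)=[q^m]f_1^8$, so that $a(3m+1)\equiv B_5(m)\pmod 5$ for all $m\ge0$, with $a(1)=1$.

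The arithmetic heart is that $G$ is a Hecke eigenform with complex multiplication by $\Q(\sqrt{-3})$; consequently its $p$-th eigenvalue $a(p)$ vanishes for every prime inert in $\Q(\sqrt{-3})$, i.e. for every $p\equiv2\pmod3$. Granting this, the eigenform relation $T_pG=a(p)G$ for $p\nmid 9$ reads coefficientwise $a(pn)+p^{3}a(n/p)=a(p)a(n)$, so for $p\equiv2\pmod3$ we get $a(pn)=-p^{3}a(n/p)$ for all $n$. Replacing $n$ by $pn$ yields the exact recursion
$$a(p^2n)=-p^{3}\,a(n)\qquad(n\ge1),$$
while $a(pn)=0$ whenever $p\nmid n$, since then $a(n/p)=0$. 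Note that both statements are valid for every $n$, so the possible coincidence of the primes $p_1,\dots,p_{k+1}$ (which are not required to be distinct) causes no difficulty.

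Finally I would carry out the index bookkeeping. Letting $M$ denote the argument in the statement, a direct computation gives
$$3M+1=\big(p_1^2p_2^2\cdots p_k^2\big)\,p_{k+1}\,u,\qquad u:=p_{k+1}(3n+1)+3j.$$
Because $p_{k+1}\equiv2\pmod3$ we have $p_{k+1}\neq3$, and $p_{k+1}\nmid j$ by hypothesis, so $u\equiv 3j\not\equiv0\pmod{p_{k+1}}$, i.e. $p_{k+1}\nmid u$. Stripping the squares one prime at a time by means of $a(p_i^2X)=-p_i^{3}a(X)$ gives
$$a(3M+1)=(-1)^k\,p_1^{3}p_2^{3}\cdots p_k^{3}\,a(p_{k+1}u)=0,$$
the last equality holding because $p_{k+1}\nmid u$ forces $a(p_{k+1}u)=0$. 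Since $a(3M+1)\equiv B_5(M)\pmod 5$, this yields $B_5(M)\equiv0\pmod5$, as claimed.

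The one step requiring genuine input rather than routine bookkeeping is the claim that $\eta(3z)^8$ is a CM Hecke eigenform with $a(p)=0$ for all $p\equiv 2\pmod 3$. I would secure this either by appealing to the classification of eta-quotients that are newforms, or by exhibiting $G$ directly as the weight-$4$, level-$9$ newform attached to a Hecke Gr\"ossencharacter of $\Q(\sqrt{-3})$, which simultaneously supplies the eigenform property and the vanishing of inert Fourier coefficients used above.
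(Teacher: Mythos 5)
Your proposal is correct and follows essentially the same route as the paper: reduce the generating function modulo $5$ to $\eta(3z)^8\in S_4(\Gamma_0(9))$, use its Hecke eigenform property to get $a(pn)+p^3a(n/p)=0$ for $p\equiv 2\pmod 3$, and strip the square factors multiplicatively before killing the final coefficient $a(p_{k+1}u)$ with $p_{k+1}\nmid u$. The only cosmetic differences are that the paper deduces the eigenvalue vanishing $\lambda(p)=a(p)=0$ elementarily from the support condition $a(n)=0$ for $n\not\equiv 1\pmod 3$ (citing Martin only for the eigenform property) rather than invoking CM theory, and it iterates the resulting congruences on the $B_5$ side instead of factoring $3M+1$ on the coefficient side as you do.
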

By taking $p_1= p_2= \cdots= p_{k+1}= p$ in Theorem \ref{thm10}, we get the following corollary. 
\begin{coro}\label{coro5}
	Let $k$ and $n$ be a non-negative integers. Let $p$ be a prime with $p \equiv 2 \pmod3.$  For any integer $j \not \equiv 0 \pmod p,$ we have
	$$B_{5}\left(p^{2k+2}n +p^{2k+1}j+ \frac{p^{2k+2}-1}{3} \right) \equiv 0 \pmod 5.$$
\end{coro}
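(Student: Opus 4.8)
The plan is to specialize Theorem \ref{thm10} by setting $p_1 = p_2 = \cdots = p_{k+1} = p$ for a single prime $p$ with $p \equiv 2 \pmod 3$, and then to simplify the resulting arithmetic progression. Since every $p_i$ equals $p$, the hypothesis $p_i \equiv 2 \pmod 3$ for all $i$ collapses to the single condition $p \equiv 2 \pmod 3$, and the restriction $j \not\equiv 0 \pmod{p_{k+1}}$ becomes $j \not\equiv 0 \pmod p$, exactly as stated in the corollary. So the entire content of the proof is the algebraic identification of the two progressions.

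First I would compute the coefficient of $n$. The product $p_1^2 p_2^2 \cdots p_{k+1}^2$ has $k+1$ factors, each equal to $p^2$, so it equals $p^{2(k+1)} = p^{2k+2}$. Next I would simplify the constant term. Writing $p_1^2 p_2^2 \cdots p_k^2 = p^{2k}$ and $p_{k+1} = p$, we get $p_1^2 \cdots p_k^2 p_{k+1} = p^{2k+1}$, so the numerator becomes
$$p^{2k+1}(3j + p) - 1 = 3j\, p^{2k+1} + p^{2k+2} - 1.$$
Dividing by $3$ and splitting off the multiple of $3$ gives
$$\frac{p^{2k+1}(3j+p)-1}{3} = j\,p^{2k+1} + \frac{p^{2k+2}-1}{3}.$$
Here one checks that since $p \equiv 2 \pmod 3$ we have $p^{2k+2} \equiv 1 \pmod 3$, so $\frac{p^{2k+2}-1}{3}$ is an integer and the expression is well defined.

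Combining these two computations, the argument of $B_5$ appearing in Theorem \ref{thm10} becomes
$$p^{2k+2} n + p^{2k+1} j + \frac{p^{2k+2}-1}{3},$$
and the congruence $B_5(\,\cdot\,) \equiv 0 \pmod 5$ is then inherited immediately from the theorem. There is no genuine obstacle in this argument: the only points requiring care are the bookkeeping of exponents when the single factor $p_{k+1}$ is separated from the squared product $p_1^2 \cdots p_k^2$, and the verification that the congruence $p \equiv 2 \pmod 3$ forces $\frac{p^{2k+2}-1}{3}$ to be integral so that the progression makes sense.
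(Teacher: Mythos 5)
Your proposal is correct and is exactly the paper's route: the paper obtains Corollary \ref{coro5} by setting $p_1=p_2=\cdots=p_{k+1}=p$ in Theorem \ref{thm10}, and your simplification $\frac{p^{2k+1}(3j+p)-1}{3}=jp^{2k+1}+\frac{p^{2k+2}-1}{3}$ together with the integrality check (via $p^2\equiv 1\pmod 3$) is precisely the bookkeeping the paper leaves implicit.
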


\begin{remark}
	Note that \eqref{eq004} can be obtained as a particular case of Corollary \ref{coro5} by taking $n=0$. 
\end{remark}

\begin{remark}
In Corollary \ref{coro5}, replacing $k$ by $k-1$ and putting $p=2, j=1$ we get
$$B_{5}\left(2^{2k}n +2^{2k-1}+ \frac{2^{2k}-1}{3} \right) \equiv B_{5}\left( 4^{k}n +\frac{5 \cdot 4^{k}-2}{6} \right) \equiv 0 \pmod 5,$$ 
which is a result of T. Kathiravan and S. N. Fathima \cite[Theorem $1.1$]{Kathiravan2017}.
\end{remark}

  Furthermore, we prove some infinite families of congruences modulo $5$ for $B_5(n)$ by using a result due to Newman \cite{Newmann1959}.
\begin{theorem}\label{Newmann2}
Let $k$ be a non-negative integer. Let $p$ be a prime number with $p \equiv 1 \pmod 6.$ If $B_{5}\left(\frac{p-1}{3}\right) \equiv 0 \pmod 5,$ then for any integer $n \geq 0$ satisfying $p \nmid (3n+1)$, we have
	\begin{equation}\label{eq149}
		B_{5}\left(p^{2k+1}n+ \frac{p^{2k +1}-1}{3}\right) \equiv 0 \pmod 5.
	\end{equation}
\end{theorem}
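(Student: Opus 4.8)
The plan is to reduce the generating function \eqref{eq501} for $B_5(n)$ modulo $5$ to a power of the Dedekind eta function, to recognise the resulting series as a congruence for a Hecke eigenform of weight $4$, and then to push the hypothesis $B_5\!\left(\frac{p-1}{3}\right)\equiv 0\pmod 5$ through the associated Hecke recursion, which is the content of the identity of Newman \cite{Newmann1959}.

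First I would record the mod-$5$ reduction. Since $(1-q^m)^5\equiv 1-q^{5m}\pmod 5$, we have $f_1^5\equiv f_5\pmod 5$, whence
$$\sum_{n=0}^{\infty}B_5(n)q^n=\frac{f_5^2}{f_1^2}\equiv\frac{f_1^{10}}{f_1^2}=f_1^8\pmod 5.$$
Replacing $q$ by $q^3$ and multiplying by $q$ isolates the relevant progression: with $q=e^{2\pi i z}$ and $\eta(3z)=q^{1/8}\prod_{m\ge1}(1-q^{3m})$,
$$\sum_{n=0}^{\infty}B_5(n)q^{3n+1}\equiv q\prod_{m=1}^{\infty}(1-q^{3m})^8=\eta(3z)^8\pmod 5.$$
Writing $\eta(3z)^8=\sum_{m\ge1}c(m)q^m$, this gives $c(3n+1)\equiv B_5(n)\pmod 5$, with $c(m)=0$ unless $m\equiv 1\pmod 3$. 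The key structural input is that $\eta(3z)^8$ is a normalised Hecke eigenform of weight $4$ on $\Gamma_0(9)$ with trivial character (the space $S_4(\Gamma_0(9))$ is one-dimensional and all new), so Newman's identity yields, for every prime $p\neq 3$,
$$c(pm)=c(p)\,c(m)-p^{3}\,c(m/p),\qquad c(m/p):=0\ \text{when}\ p\nmid m.$$

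Second I would translate the hypothesis. Taking $m=p$ (so $3n+1=p$, i.e. $n=(p-1)/3$, an integer because $p\equiv1\pmod 6$) gives $c(p)\equiv B_5\!\left(\frac{p-1}{3}\right)\pmod 5$; thus the assumption is exactly $c(p)\equiv 0\pmod 5$. Now fix $M=3n+1$ with $p\nmid M$ and set $x_j:=c(p^jM)$. The recursion together with $c(p)\equiv 0$ gives
$$x_{j+1}\equiv -p^{3}\,x_{j-1}\pmod 5,\qquad x_1=c(pM)=c(p)c(M)\equiv 0\pmod 5,$$
so by induction $x_{2k+1}\equiv(-p^{3})^{k}x_1\equiv 0\pmod 5$ for all $k\ge 0$. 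Since $3\!\left(p^{2k+1}n+\frac{p^{2k+1}-1}{3}\right)+1=p^{2k+1}(3n+1)=p^{2k+1}M$, this reads
$$B_5\!\left(p^{2k+1}n+\frac{p^{2k+1}-1}{3}\right)\equiv c\!\left(p^{2k+1}M\right)\equiv 0\pmod 5,$$
which is \eqref{eq149}, the condition $p\nmid M$ being precisely $p\nmid(3n+1)$.

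The hard part will be the eigenform input rather than the bookkeeping: one must justify that $\eta(3z)^8$ is a normalised Hecke eigenform so that the two-term recursion holds with the precise weight factor $p^{3}$ and trivial nebentypus, and check that the excluded Euler factor at $p=3$ causes no trouble here — which it does not, since $p\equiv 1\pmod 6$ forces $p\neq 3$. The remaining ingredients, namely the eta-power reduction modulo $5$ and the parity induction, are routine.
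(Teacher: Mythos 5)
Your proof is correct, but it takes a genuinely different route from the paper's. You reduce the generating function \eqref{eq501} modulo $5$ all the way down to $f_1^8$ and work with the weight-$4$ Hecke eigenform $\eta(3z)^8$ on $\Gamma_0(9)$ --- exactly the object the paper uses to prove Theorem \ref{thm10} --- then run the two-term Hecke recursion $c(pm)=c(p)c(m)-p^3c(m/p)$ (the relation \eqref{eq904} of the paper), using the hypothesis $B_5\left(\frac{p-1}{3}\right)\equiv 0\pmod 5$ to kill the eigenvalue $c(p)$ modulo $5$; this is the new ingredient, since the paper only exploits \eqref{eq904} for primes $p\not\equiv 1\pmod 3$, where $c(p)=0$ identically. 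The paper's own proof of Theorem \ref{Newmann2} instead keeps one factor of $f_5$: it reduces to $\sum t(n)q^n=f_1^3f_5\pmod 5$, a weight-$2$ eta product, and quotes Newman's Theorem 3 \cite{Newmann1959} for the identity $t\left(pn+\frac{p-1}{3}\right)=t\left(\frac{p-1}{3}\right)t(n)-\left(\frac 5p\right)p\, t\left(\frac np-\frac{p-1}{3p}\right)$, then runs essentially the same induction (even powers of $p$ give a multiplicative formula, odd powers vanish mod $5$). The two arguments are structurally parallel: your eigenvalue $c(p)$ plays the role of the paper's coefficient $t\left(\frac{p-1}{3}\right)$, and your weight factor $p^3$ replaces $\left(\frac 5p\right)p$. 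What your version buys is economy and uniformity --- it recycles the eigenform already introduced for Theorem \ref{thm10}, with the eigenform property justified by $\dim S_4(\Gamma_0(9))=1$ or Martin's list \cite{Martin1996}, so Theorems \ref{thm10} and \ref{Newmann2} become one computation; what the paper's version buys is that the hypothesis enters as a literal Fourier coefficient of $f_1^3f_5$ in the exact shape Newman's identity is stated, keeping the argument at weight $2$ and independent of the eigenform machinery.
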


Define the set 
\begin{equation}\label{eq800bcc}
	\mathbb{\bar{S}}:= \left\{ 103,  157, 193\right\}.
\end{equation}
It is easy to check that for any $p \in  \mathbb{\bar{S}},$ $ B_5\left(\frac{p-1}{3}\right) \equiv 0 \pmod 5.$
\begin{coro}
	Let $\mathbb{\bar{S}}$ be defined by \eqref{eq800bcc}. If $p \in \mathbb{\bar{S}}$ and $ p \nmid (3n+1),$ then for $n \geq 0, $ and $k \geq 0,$ we have
	\begin{equation}\label{eq49a}
		B_{5}\left(p^{2 k+1}n+ \frac{p^{2 k +1}-1}{3}\right) \equiv 0 \pmod 5.
	\end{equation}
\end{coro}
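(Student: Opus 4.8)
The plan is to deduce this corollary as a direct specialization of Theorem \ref{Newmann2}. That theorem imposes exactly two conditions on the prime $p$: namely $p \equiv 1 \pmod 6$, together with the single seed congruence $B_{5}\left(\frac{p-1}{3}\right) \equiv 0 \pmod 5$. Once these two conditions are verified for each element of $\mathbb{\bar{S}}$, the conclusion \eqref{eq49a} follows verbatim, with no further argument required.

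First I would record that each of the three primes lies in the correct residue class. We have $103 = 6\cdot 17 + 1$, $157 = 6 \cdot 26 + 1$, and $193 = 6 \cdot 32 + 1$, so each satisfies $p \equiv 1 \pmod 6$, and each is prime. This confirms the first hypothesis of Theorem \ref{Newmann2} for every $p \in \mathbb{\bar{S}}$. Second, I would verify the seed congruence $B_{5}\left(\frac{p-1}{3}\right) \equiv 0 \pmod 5$. For $p = 103, 157, 193$ the argument $\frac{p-1}{3}$ takes the values $34, 52, 64$ respectively, so it suffices to confirm that $B_5(34) \equiv B_5(52) \equiv B_5(64) \equiv 0 \pmod 5$. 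These three coefficients can be read off from the power series expansion of the generating function $\frac{f_5^2}{f_1^2}$ in \eqref{eq501}, reduced modulo $5$; this is a finite, mechanical computation, and is precisely the assertion made in the sentence introducing $\mathbb{\bar{S}}$.

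With both hypotheses established, applying Theorem \ref{Newmann2} to each $p \in \mathbb{\bar{S}}$ immediately yields \eqref{eq49a} for all $n \geq 0$ satisfying $p \nmid (3n+1)$ and all $k \geq 0$. There is no genuine conceptual obstacle here: the entire content of the corollary is the finite verification of the three seed values $B_5(34)$, $B_5(52)$, $B_5(64)$, which is the only step where any computation enters. The role of $\mathbb{\bar{S}}$ is simply to exhibit explicit primes for which the hypothesis of Theorem \ref{Newmann2} can be checked directly, thereby turning the conditional statement of that theorem into an unconditional family of congruences.
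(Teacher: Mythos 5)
Your proposal is correct and follows exactly the paper's route: the corollary is stated there as an immediate specialization of Theorem \ref{Newmann2}, with the only substantive content being the finite check that $B_5(34)$, $B_5(52)$, $B_5(64) \equiv 0 \pmod 5$ for the primes $103, 157, 193 \equiv 1 \pmod 6$ in $\mathbb{\bar{S}}$. Your verification of the residue classes and of the seed values $\frac{p-1}{3} = 34, 52, 64$ is precisely what the paper relies on, so nothing further is needed.
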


	Furthermore, we prove the following multiplicative formulae for $5$-regular bipartition function modulo $5$.
\begin{theorem}\label{thm11}
	Let $k$ be a positive integer and $p$ be a prime number such that $ p \equiv 2 \pmod 3.$ Let $r$ be a non-negative integer such that $p$ divides $3r+2,$ then
	\begin{align*}
		B_{5}\left(p^{k+1}n+pr+ \frac{2p-1}{3}\right)\equiv  (-p^3) \cdot  B_{5}\left(p^{k-1}n+ \frac{3r+2-p}{3p} \right) \pmod5.
	\end{align*}
\end{theorem}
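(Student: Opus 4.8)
The plan is to read off the congruence from the Hecke action on the weight-$4$ eigenform that already governs Theorem \ref{thm10}. Reducing the generating function \eqref{eq501} modulo $5$ and using $f_5\equiv f_1^5\pmod 5$ gives $\sum_{n\geq 0}B_5(n)q^n\equiv f_1^8\pmod 5$; replacing $q$ by $q^3$ and multiplying by $q$ then yields
\begin{equation*}
\sum_{n\geq 0}B_5(n)\,q^{3n+1}\equiv q\prod_{m\geq 1}(1-q^{3m})^8=\eta(3\tau)^8\pmod 5 .
\end{equation*}
Write $\eta(3\tau)^8=\sum_{N\geq 1}a(N)q^N$. Since $\eta(3\tau)^8\in S_4(\Gamma_0(9))$ and this space is one-dimensional, $\eta(3\tau)^8$ is, up to scaling, the unique normalized cusp form there and hence a Hecke eigenform. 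Its $q$-expansion is supported on exponents $N\equiv 1\pmod 3$, so $a(N)\equiv B_5\!\left(\frac{N-1}{3}\right)\pmod 5$ for such $N$, and, crucially, $a(p)=0$ for every prime $p\equiv 2\pmod 3$ (reflecting complex multiplication by $\mathbb{Q}(\sqrt{-3})$).

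First I would record the Hecke recursion. For the normalized weight-$4$, trivial-character eigenform $\eta(3\tau)^8$ and any prime $p\neq 3$, the coefficients satisfy $a(pM)=a(p)\,a(M)-p^3a(M/p)$ for all $M\geq 1$, where $a(M/p)=0$ if $p\nmid M$. For $p\equiv 2\pmod 3$ we have $a(p)=0$, so replacing $M$ by $pW$ collapses this to
\begin{equation*}
a(p^2W)=-p^3\,a(W),\qquad W\geq 1,
\end{equation*}
valid for every $W$ with no coprimality restriction. This is the source of the factor $-p^3$.

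Next comes the index bookkeeping, the only computational step. Put $M:=p^{k+1}n+pr+\frac{2p-1}{3}$ and $M':=p^{k-1}n+\frac{3r+2-p}{3p}$, the two arguments of $B_5$ in the statement, and set $W:=3p^{k-1}n+\frac{3r+2}{p}$. A direct expansion gives $3M+1=p\bigl(3p^kn+3r+2\bigr)$; since $p\mid(3r+2)$, the inner factor is again a multiple of $p$, whence $3M+1=p^2W$, while the same computation gives $3M'+1=W$. Here one checks that $M'$ is a non-negative integer: from $p\mid(3r+2)$ and $p\equiv 2\pmod 3$ one finds $\frac{3r+2}{p}\equiv 1\pmod 3$, so that $\frac{3r+2-p}{3p}=\frac13\!\left(\frac{3r+2}{p}-1\right)\in\mathbb{Z}_{\geq 0}$, the non-negativity following from $3r+2\geq p$.

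Finally, because $3M+1\equiv 3M'+1\equiv 1\pmod 3$, I can pass freely between $B_5$ and the coefficients $a(\cdot)$, and the displayed identity gives
\begin{equation*}
B_5(M)\equiv a(3M+1)=a(p^2W)=-p^3\,a(W)=-p^3\,a(3M'+1)\equiv -p^3\,B_5(M')\pmod 5,
\end{equation*}
which is exactly the claim. The real content lies entirely in the inputs recalled in the first paragraph, namely the eta-quotient congruence and the eigenform property with $a(p)=0$ for $p\equiv 2\pmod 3$; these are precisely what is established in proving Theorem \ref{thm10}, so the hard part is not here at all, and granting them the remaining work reduces to the elementary identity $3M+1=p^2(3M'+1)$ together with the integrality check on $M'$.
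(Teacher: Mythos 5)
Your proposal is correct and follows essentially the same route as the paper: both exploit the Hecke eigenform property of $\eta(3z)^8$ with $a(p)=0$ for $p\equiv 2\pmod 3$ (established in proving Theorem \ref{thm10}) to get $a(p^2W)=-p^3a(W)$, and then finish by the index identity $3M+1=p^2(3M'+1)$. Your packaging of the substitution via $M$, $M'$, $W$ is just a cleaner rewriting of the paper's two-step substitution $n\mapsto 3n+2$, $n\mapsto p^kn+r$, and your integrality check on $\frac{3r+2-p}{3p}$ matches the paper's remark.
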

\begin{coro}\label{coro6}
	Let $k$ be a positive integer and $p$ be a prime number such that $p \equiv 2 \pmod 3.$ Then
	\begin{align*}
		B_{5}\left(p^{2k}n+ \frac{p^{2k}-1}{3} \right)&\equiv  \left(-p^3\right)^k \cdot  B_{5}(n) \pmod5.
	\end{align*}
\end{coro}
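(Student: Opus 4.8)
The plan is to derive Corollary~\ref{coro6} from Theorem~\ref{thm11} by specializing, at each stage, the free parameter $r$ so that the theorem collapses into a clean one-step recurrence, and then to iterate. The mechanism is that Theorem~\ref{thm11} lowers the power of $p$ in the argument by two, so a single well-chosen application should carry $p^{2i}n+\frac{p^{2i}-1}{3}$ to $p^{2i-2}n+\frac{p^{2i-2}-1}{3}$ at the cost of one factor of $-p^3$.

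Concretely, fix $i\geq 1$ and apply Theorem~\ref{thm11} with its parameter $k$ replaced by $2i-1$, with the free variable left as $n$, and with
$$r=\frac{p^{2i-1}-2}{3}.$$
The first task is to confirm that this $r$ is admissible. Since $p\equiv 2\pmod 3$ we have $p^{2i-1}\equiv(-1)^{2i-1}\equiv 2\pmod 3$, so $3\mid p^{2i-1}-2$ and $r$ is a non-negative integer; moreover $3r+2=p^{2i-1}$ is divisible by $p$ (as $2i-1\geq 1$), which is exactly the hypothesis of Theorem~\ref{thm11}. With this choice the left-hand argument becomes
$$p^{2i}n+pr+\frac{2p-1}{3}=p^{2i}n+\frac{p^{2i}-2p}{3}+\frac{2p-1}{3}=p^{2i}n+\frac{p^{2i}-1}{3},$$
while the inner argument on the right becomes
$$p^{2i-2}n+\frac{3r+2-p}{3p}=p^{2i-2}n+\frac{p^{2i-1}-p}{3p}=p^{2i-2}n+\frac{p^{2i-2}-1}{3}.$$
Hence Theorem~\ref{thm11} produces the one-step recurrence
$$B_{5}\!\left(p^{2i}n+\frac{p^{2i}-1}{3}\right)\equiv(-p^3)\,B_{5}\!\left(p^{2i-2}n+\frac{p^{2i-2}-1}{3}\right)\pmod 5,$$
valid for every $i\geq 1$.

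It then remains to iterate this recurrence for $i=k,k-1,\dots,1$ (equivalently, to induct on $k$, the base case $k=1$ being the recurrence itself). The arguments telescope, since the output of step $i$ is precisely the input of step $i-1$, and each application contributes one factor of $-p^3$. After $k$ steps the argument has descended to $p^{0}n+\frac{p^{0}-1}{3}=n$, yielding
$$B_{5}\!\left(p^{2k}n+\frac{p^{2k}-1}{3}\right)\equiv(-p^3)^k\,B_{5}(n)\pmod 5,$$
which is the desired formula.

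I do not expect a real obstacle, since Theorem~\ref{thm11} already carries the analytic content; the only point demanding care is the bookkeeping that shows the single choice $r=\frac{p^{2i-1}-2}{3}$ simultaneously (i) satisfies both admissibility conditions of Theorem~\ref{thm11} and (ii) lines up the left and right arguments with the prescribed telescoping targets, the integrality of $r$ hinging on $p^{2i-1}\equiv 2\pmod 3$. This mirrors exactly the way Corollary~\ref{coro4} is obtained from Theorem~\ref{thm9}, with the data $6,-p$ there replaced by $3,-p^3$ here.
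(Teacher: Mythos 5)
Your proposal is correct and is essentially the paper's own argument: the paper likewise specializes $r$ so that $3r+2=p^{2k-1}$, replaces $k$ by $2k-1$ in Theorem~\ref{thm11}, and iterates the resulting recurrence down to $B_5(n)$. Your version merely makes the per-step bookkeeping (integrality of $r$ and the telescoping of arguments) explicit, which the paper leaves implicit in its ``$\equiv\cdots\equiv$'' step.
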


\begin{remark}
	In particular, if we take $p=2$ in Corollary \ref{coro6}, we get 
	\begin{align*}
		B_{5}\left(4^{k}n+ \frac{4^{k}-1}{3} \right)&\equiv  \left(-2^3\right)^k \cdot  B_{5}(n) \equiv  2^k \cdot  B_{5}(n) \pmod5,
	\end{align*}
	which was also obtained by T. Kathiravan and S. N. Fathima \cite[Theorem $1.1$]{Kathiravan2017}.
\end{remark}

In this article, we also employ a method of Radu and Seller \cite{Radu2011} and we obtain an algorithm for congruences for $B_p(n)$ where $p$ is a prime number. Before stating our result in this direction, we define some notation. For an integer $m$, a prime $p\geqslant 5$ and $t\in \{0,1,\dots,m-1\}$, we set
\begin{align*}
	\kappa:=&\kappa(m)=\gcd(m^2-1,24),\\
	\hat{p}:=& \frac{p^2-1}{24},\\
	A_t:=&A_t(m,p)=\frac{12m}{\gcd(-\kappa(12 t+p-1),12m)}, \\
	\epsilon_p:=&\epsilon_p(m,p)=\begin{cases}
		1 & \textrm{ if } p\nmid m,\\
		0 & \textrm{ if } p| m.
	\end{cases}
\end{align*}
Note that $\hat{p}\in \Z$ and $A_t \in \Z$ for each $t\in \{0,1,\dots,m-1\}$.

\begin{theorem}\label{mainthmRaduSellers}
	Let $p\geq 5$ be a prime and let $u$ be an integer. For a positive integer $g$, let $e_1, e_2, \dots, e_g$ be positive integers. Let $m=p_1^{e_1} p_2^{e_2} \cdots p_g^{e_g}$ where $p_i\geq 5$'s are prime numbers. Let $t \in \{0,1,\ldots, m-1\}$ such that $A_t$ divides $p^{\epsilon_p}p_1p_2\cdots p_{g}$. Define
	\begin{align*}
		P(t):=\left\{t^{'}: \exists [s]_{24m} \textrm{ such that } t^{'}\equiv ts+\frac{(s-1)(p-1)}{12}  \pmod {m}\right\}
	\end{align*} 
	where $[s]_{24m}$ is the residue class of $s$ in $\mathbb{Z}_{24m}$. If the congruence $B_p(mn+t^{'}) \equiv 0 \pmod u$ holds for all $t^{'} \in P(t)$ and $ 0\leq n \leq \left \lfloor \frac{m(p-1)}{12}\left((p+1)^{\epsilon_p}(p_1+1)(p_2+1)\cdots (p_{g}+1)-p\right) - \frac{p-1}{12m} \right \rfloor$, then $B_p(mn+t^{'}) \equiv 0 \pmod u$ holds for all $t^{'} \in P(t)$ and $n \geq 0$.
\end{theorem}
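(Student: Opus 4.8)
The plan is to recognize \eqref{eq501} with $\ell=p$ as an eta-quotient and then run the modular-function machinery of Radu and Sellers \cite{Radu2011}, whose conclusion is precisely a statement of the form ``finitely many coefficients vanish modulo $u$ implies all of them do.'' First I would rewrite the generating function in terms of the Dedekind eta function. Since $f_1=q^{-1/24}\eta(\tau)$ and $f_p=q^{-p/24}\eta(p\tau)$, equation \eqref{eq501} gives
\[
\sum_{n\geq 0} B_p(n)\,q^n = q^{(1-p)/12}\,\frac{\eta(p\tau)^2}{\eta(\tau)^2}.
\]
Thus the governing eta-quotient has exponent tuple $r_1=-2$, $r_p=2$ indexed by the divisors of $p$, and $\sum_{\delta}\delta r_\delta=2(p-1)$. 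This is the source of the shift appearing in $P(t)$: one has $\frac{(s-1)(p-1)}{12}=\frac{s-1}{24}\sum_\delta\delta r_\delta$, so $P(t)$ is exactly the orbit of the progression $t$ under the residue classes $[s]_{24m}$ admitted by Radu's construction, i.e.\ the set of progressions that get permuted among one another when the cusp-widening (sieving) transformations are applied. One therefore cannot hope to isolate a single $t'$; the whole orbit $P(t)$ must be carried along, which is why the hypothesis is imposed for every $t'\in P(t)$.

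Next I would invoke Radu's framework on $\Gamma_0(N)$ with the squarefree level $N=p^{\epsilon_p}p_1p_2\cdots p_g$. The idea is to multiply the sieved series $\sum_{t'\in P(t)}\sum_{n}B_p(mn+t')\,q^{\,mn+t'}$ by a compensating eta-quotient $\prod_{\delta\mid N}\eta(\delta\tau)^{s_\delta}$, with the integers $s_\delta$ chosen so that the product is a holomorphic modular function on $\Gamma_0(N)$ that is holomorphic at every cusp except $\infty$. Before this can be carried out one must verify the admissibility (the ``$\Delta^{*}$'') conditions of Radu's theorem, namely that the sieved series transforms correctly under $\Gamma_0(N)$; these are exactly the arithmetic conditions encoded by $\kappa=\gcd(m^2-1,24)$ and by the requirement $A_t\mid p^{\epsilon_p}p_1\cdots p_g$, which guarantee that the orbit set $P(t)$ is closed and that the relevant $\mathrm{SL}_2(\Z)$-matrices normalize the group. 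I would check these congruence and divisibility conditions first, since they are what make the rest of the argument legitimate.

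Then I would compute, via the standard order formula for an eta-quotient at a cusp $a/c$ of $\Gamma_0(N)$, the order of the compensated function at every cusp. The choice of $s_\delta$ forces these orders to be nonnegative away from $\infty$, so the function has a pole only at $\infty$. The valence formula (the width-weighted total order is $0$) then bounds the pole order at $\infty$ by the sum of the orders at the remaining cusps, and this sum evaluates to
\[
\nu:=\left\lfloor \tfrac{m(p-1)}{12}\big((p+1)^{\epsilon_p}(p_1+1)\cdots(p_g+1)-p\big)-\tfrac{p-1}{12m}\right\rfloor .
\]
Here $(p+1)^{\epsilon_p}(p_1+1)\cdots(p_g+1)$ is the index $[\mathrm{SL}_2(\Z):\Gamma_0(N)]$ (recall $N$ is squarefree and $(p+1)^0=1$ when $p\mid m$), the factor $\tfrac{p-1}{12}$ is the order of $\eta(p\tau)^2/\eta(\tau)^2$ at $\infty$, and the correction $-\tfrac{p-1}{12m}$ records the $q^{t'}$-shift from the sieve. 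Finally, a modular function on $\Gamma_0(N)$ holomorphic except for a pole of order at most $\nu$ at $\infty$ is, modulo $u$, determined by its first $\nu+1$ Fourier coefficients; so if $B_p(mn+t')\equiv 0\pmod u$ for all $t'\in P(t)$ and $0\leq n\leq \nu$, the entire function vanishes modulo $u$, giving $B_p(mn+t')\equiv 0\pmod u$ for all $n\geq 0$ and all $t'\in P(t)$.

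The hard part will be the cusp bookkeeping in the third step: choosing the compensating exponents $s_\delta$ so that holomorphy holds simultaneously at every finite cusp, and then extracting the \emph{exact} constant in the pole bound (including the floor and the $-\tfrac{p-1}{12m}$ term) rather than a crude overestimate. Everything else — the eta rewriting, the verification of the $\kappa$- and $A_t$-conditions, and the concluding valence-formula argument modulo $u$ — is routine once the admissible tuple $(m,N,(r_\delta),(s_\delta))$ has been pinned down.
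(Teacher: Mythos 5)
Your plan follows the same route as the paper: recognize $\sum B_p(n)q^n$ as the eta-quotient with exponents $r_1=-2$, $r_p=2$, observe that $\frac{(s-1)(p-1)}{12}=\frac{s-1}{24}\sum_{\delta\mid M}\delta r_{\delta}$ so that your $P(t)$ is exactly Radu's $P_{m,r}(t)$, work on $\Gamma_0(N)$ with $N=p^{\epsilon_p}p_1\cdots p_g$ squarefree, and extract the stated bound from $[\Gamma:\Gamma_0(N)]=(p+1)^{\epsilon_p}(p_1+1)\cdots(p_g+1)$. All of this bookkeeping is correct and matches the paper, whose proof invokes Radu's finite-check criterion (Lemma \ref{finite check}) as a black box rather than re-deriving it via the valence formula as you sketch; that difference is cosmetic.

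However, as a proof the proposal has a genuine gap, and it sits exactly where you flagged ``the hard part'': you assert that compensating exponents $s_\delta$ can be chosen so that holomorphy holds at every finite cusp \emph{and} so that the pole bound comes out to exactly the stated $\nu$, but you neither exhibit such exponents nor prove the cusp inequalities. That is the actual mathematical content of the paper's argument: (i) Wang's lemma (Lemma \ref{square-free}) reduces ``all cusps'' to the finitely many representatives $\gamma_\delta=\begin{bmatrix}1&0\\ \delta&1\end{bmatrix}$, $\delta\mid N$, using that $N$ is squarefree; (ii) Lemma \ref{pmrpositive} establishes the uniform lower bound $p_{m,r}(\gamma)\geq -\frac{(p-1)m}{12p}$ by a case analysis on $p\mid c$ versus $p\nmid c$ together with a $p$-adic valuation argument on $\gcd(a+\kappa\lambda c,m)$; (iii) Lemma \ref{pmrpapositive} exhibits the explicit compensating tuple $a_p=2m(p-1)$, $a_\delta=0$ for $\delta\neq p$, for which $p_{m,r}(\gamma_\delta)+p^{*}_{a}(\gamma_\delta)\geq 0$; and (iv) Lemma \ref{lembound} verifies that with this \emph{specific} choice Radu's $\nu$ is at most the stated floor --- a generic holomorphy-forcing choice of exponents would inflate $\nu$ through the terms $\sum_{\delta}a_\delta$ and $\sum_{\delta}\delta a_\delta$, so exactness is not automatic. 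In addition, the admissibility $(m,M,N,r,t)\in\Delta^{*}$ (Lemma \ref{lemdelta}), e.g. $\kappa m\equiv 0\pmod 6$ and the condition handled via $p^2\equiv 1\pmod 8$, must be checked concretely, which you mention only in passing. None of these steps would fail --- the paper carries all of them out --- but until they are executed, what you have is an accurate plan rather than a proof.
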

	
	
	\section{Preliminaries}
	We recall some basic facts and definition on modular forms. For more details, one can see \cite{Koblitz}, \cite{Ono2004}. We start with some matrix groups. We define
	\begin{align*}
		\Gamma:=\mathrm{SL_2}(\mathbb{Z})= &\left\{ \begin{bmatrix}
			a && b \\c && d
		\end{bmatrix}: a, b, c, d \in \mathbb{Z}, ad-bc=1 \right\},\\
		\Gamma_{\infty}:= &\left\{\begin{bmatrix}
			1 &n\\ 0&1	\end{bmatrix}: n \in \mathbb{Z}\right\}.
	\end{align*}
	Let $N$ be a positive integer.  Then $ \Gamma_{0}(N), \Gamma_{1}(N)  $ and $ \Gamma(N) $ are defined as
	\begin{align*}
		\Gamma_{0}(N):=& \left\{ \begin{bmatrix}
			a && b \\c && d
		\end{bmatrix} \in \mathrm{SL_2}(\mathbb{Z}) : c\equiv0 \pmod N \right\},\\
		\Gamma_{1}(N):=& \left\{ \begin{bmatrix}
			a && b \\c && d
		\end{bmatrix} \in \Gamma_{0}(N) : a\equiv d  \equiv 1 \pmod N \right\}
	\end{align*}
	and 
	\begin{align*}
		\Gamma(N):= \left\{ \begin{bmatrix}
			a && b \\c && d
		\end{bmatrix} \in \mathrm{SL_2}(\mathbb{Z}) : a\equiv d  \equiv 1 \pmod N,  b \equiv c  \equiv 0 \pmod N \right\}.
	\end{align*}
	A subgroup of $\Gamma=\mathrm{SL_2}(\mathbb{Z})$ is called a congruence subgroup if it contains $ \Gamma(N)$ for some positive integer $N$ and the smallest $N$ with this property is called its level. Note that $ \Gamma_{0}(N)$ and $ \Gamma_{1}(N)$ are congruence subgroup of level $N,$ whereas $ \mathrm{SL_2}(\mathbb{Z}) $ and $\Gamma_{\infty}$ are congruence subgroups of level $1.$ The index of $\Gamma_0(N)$ in $\Gamma$ is 
	\begin{align*}
		[\Gamma:\Gamma_0(N)]=N\prod\limits_{p|N}\left(1+\frac 1p\right)
	\end{align*}
	where $p$ runs over the prime divisors of $N$.
	
	Let $\mathbb{H}$ denote the upper half of the complex plane $\mathbb{C}$. The group 
	\begin{align*}
		\mathrm{GL_2^{+}}(\mathbb{R}):= \left\{ \begin{bmatrix}
			a && b \\c && d
		\end{bmatrix}: a, b, c, d \in \mathbb{R}, ad-bc>0 \right\},
	\end{align*}
	acts on $\mathbb{H}$ by $ \begin{bmatrix}
		a && b \\c && d
	\end{bmatrix} z = \frac{az+b}{cz+d}.$ We identify $\infty$ with $\frac{1}{0}$ and define $ \begin{bmatrix}
		a && b \\c && d
	\end{bmatrix} \frac{r}{s} = \frac{ar+bs}{cr+ds},$ where $\frac{r}{s} \in \mathbb{Q} \cup \{ \infty\}$. This gives an action of $\mathrm{GL_2^{+}}(\mathbb{R})$ on the extended half plane $\mathbb{H}^{*}=\mathbb{H} \cup \mathbb{Q} \cup \{\infty\}$. Suppose that $\Gamma$ is a congruence subgroup of $\mathrm{SL_2}(\mathbb{Z})$. A cusp of $\Gamma$ is an equivalence class in $\mathbb{P}^{1}=\mathbb{Q} \cup \{\infty\}$ under the action of $\Gamma$.
	
	The group $\mathrm{GL_2^{+}}(\mathbb{R})$ also acts on functions $g:\mathbb{H} \rightarrow \mathbb{C}$. In particular, suppose that $\gamma=\begin{bmatrix}
		a && b \\c && d
	\end{bmatrix}\in \mathrm{GL_2^{+}}(\mathbb{R})$. If $g(z)$ is a meromorphic function on $\mathbb{H}$ and $k$ is an integer, then define the slash operator $|_{k}$ by
	\begin{align*}
		(g|_{k} \gamma)(z):= (\det \gamma)^{k/2} (cz+d)^{-k} g(\gamma z).
	\end{align*}
	
	\begin{definition}
		Let $\Gamma$ be a congruence subgroup of level $N$. A holomorphic function $g:\mathbb{H} \rightarrow \mathbb{C}$ is called a modular form of integer weight $k$ on $\Gamma$ if the following hold:
		\begin{enumerate}[$(1)$]
			\item We have
			\begin{align*}
				g \left( \frac{az+b}{cz+d}\right)=(cz+d)^{k} g(z)
			\end{align*}
			for all $z \in \mathbb{H}$ and $\begin{bmatrix}
				a && b \\c && d
			\end{bmatrix}\in \Gamma$. 
			\item If $\gamma\in SL_2 (\mathbb{Z})$, then $(g|_{k} \gamma)(z)$ has a Fourier expnasion of the form
			\begin{align*}
				(g|_{k} \gamma)(z):= \sum \limits_{n\geq 0}a_{\gamma}(n) q_N^{n}
			\end{align*}
			where $q_N:=e^{2\pi i z /N}$.
		\end{enumerate}
	\end{definition}
	For a positive integer $k$, the complex vector space of modular forms of weight $k$ with respect to a congruence subgroup $\Gamma$ is denoted by $M_{k}(\Gamma)$.
	
	\begin{definition} \cite[Definition 1.15]{Ono2004}
		If $\chi$ is a Dirichlet character modulo $N$, then we say that a modular form $g \in M_{k}(\Gamma_1(N))$ has Nebentypus character $\chi$ if 
		\begin{align*}
			g \left( \frac{az+b}{cz+d}\right)=\chi(d) (cz+d)^{k} g(z)
		\end{align*}
		for all $z \in \mathbb{H}$ and $\begin{bmatrix}
			a && b \\c && d
		\end{bmatrix}\in \Gamma_{0}(N)$. The space of such modular forms is denoted by $M_{k}(\Gamma_0(N), \chi)$.
	\end{definition}
	
	The relevant modular forms for the results obtained in this article arise from eta-quotients. Recall that the Dedekind eta-function $\eta (z)$ is defined by 
	\begin{align*}
		\eta (z):= q^{1/24}(q;q)_{\infty}=q^{1/24} \prod\limits_{n=1}^{\infty} (1-q^n)
	\end{align*}
	where $q:=e^{2\pi i z}$ and $z \in \mathbb{H}$. A function $g(z)$ is called an eta-quotient if it is of the form
	\begin{align*}
		g(z):= \prod\limits_{\delta|N} \eta(\delta z)^{r_{\delta}}
	\end{align*}
	where $N$ and $r_{\delta}$ are integers with $N>0$. 
	
	\begin{theorem} \cite[Theorem 1.64]{Ono2004} \label{thm2.3}
		If $g(z)=\prod\limits_{\delta|N} \eta(\delta z)^{r_{\delta}}$ is an eta-quotient such that $k= \frac 12$ $\sum_{\delta|N} r_{\delta}\in \mathbb{Z}$, 
		\begin{align*}
			\sum\limits_{\delta|N} \delta r_{\delta} \equiv 0\pmod {24}	\quad \textrm{and} \quad \sum\limits_{\delta|N} \frac{N}{\delta}r_{\delta} \equiv 0\pmod {24},
		\end{align*}
		then $g(z)$ satisfies
		\begin{align*}
			g \left( \frac{az+b}{cz+d}\right)=\chi(d) (cz+d)^{k} f(z)
		\end{align*}
		for each $\begin{bmatrix}
			a && b \\c && d
		\end{bmatrix}\in \Gamma_{0}(N)$. Here the character $\chi$ is defined by $\chi(d):= \left(\frac{(-1)^{k}s}{d}\right)$ where $s=\prod_{\delta|N} \delta ^{r_{\delta}}$.
	\end{theorem}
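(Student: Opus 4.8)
The plan is to derive the transformation law directly from Dedekind's functional equation for $\eta(z)$, reducing the behavior of each factor $\eta(\delta z)$ under $\Gamma_0(N)$ to a single application of the eta-multiplier and then assembling the product. Recall Dedekind's formula: for $\gamma=\begin{bmatrix} a & b\\ c& d\end{bmatrix}\in \mathrm{SL}_2(\mathbb{Z})$ with $c>0$,
\[
\eta(\gamma z)=\varepsilon(\gamma)\,(cz+d)^{1/2}\,\eta(z),\qquad \varepsilon(\gamma)=\exp\!\left(\pi i\Big(\tfrac{a+d}{12c}-s(d,c)-\tfrac14\Big)\right),
\]
where $s(d,c)$ is the Dedekind sum; the degenerate case $c=0$ (translations, $\eta(z+b)=e^{\pi i b/12}\eta(z)$) and the sign case $c<0$ are handled separately by normalizing the sign of $c$. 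The crucial elementary observation is a factorization: if $\delta\mid N$ and $\gamma\in\Gamma_0(N)$, then $N\mid c$ forces $\delta\mid c$, so the matrix $\gamma_\delta:=\begin{bmatrix} a & b\delta\\ c/\delta& d\end{bmatrix}$ lies in $\mathrm{SL}_2(\mathbb{Z})$ and satisfies $\delta\,(\gamma z)=\gamma_\delta(\delta z)$. Applying Dedekind's formula with argument $\delta z$, and using $(c/\delta)(\delta z)+d=cz+d$, gives
\[
\eta(\delta\gamma z)=\eta\big(\gamma_\delta(\delta z)\big)=\varepsilon(\gamma_\delta)\,(cz+d)^{1/2}\,\eta(\delta z).
\]

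Next I would take the product over the divisors $\delta\mid N$ weighted by $r_\delta$. This yields
\[
g(\gamma z)=\prod_{\delta\mid N}\eta(\delta\gamma z)^{r_\delta}=\Big(\prod_{\delta\mid N}\varepsilon(\gamma_\delta)^{r_\delta}\Big)(cz+d)^{\frac12\sum_{\delta\mid N} r_\delta}\prod_{\delta\mid N}\eta(\delta z)^{r_\delta}=\mu(\gamma)\,(cz+d)^{k}\,g(z),
\]
where $k=\tfrac12\sum_\delta r_\delta\in\mathbb{Z}$ by hypothesis and $\mu(\gamma):=\prod_{\delta\mid N}\varepsilon(\gamma_\delta)^{r_\delta}$. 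The hypothesis $k\in\mathbb{Z}$ is exactly what lets the half-integer powers $(cz+d)^{1/2}$ combine into the single integer power $(cz+d)^{k}$, with the residual branch discrepancy absorbed into $\mu(\gamma)$. Thus the automorphy factor appears with the correct exponent, and the whole content of the theorem collapses to the single identity $\mu(\gamma)=\chi(d)=\big(\tfrac{(-1)^k s}{d}\big)$ with $s=\prod_\delta \delta^{r_\delta}$.

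The hard part is precisely this evaluation of $\mu(\gamma)$. Substituting the Dedekind-sum expression for each $\varepsilon(\gamma_\delta)$ (note $\tfrac{a+d}{12(c/\delta)}=\tfrac{(a+d)\delta}{12c}$) and collecting phases, $\mu(\gamma)$ becomes
\[
\mu(\gamma)=\exp\!\left(\pi i\Big[\tfrac{a+d}{12c}\sum_{\delta\mid N}\delta\,r_\delta-\sum_{\delta\mid N}r_\delta\,s\big(d,\tfrac{c}{\delta}\big)-\tfrac14\sum_{\delta\mid N}r_\delta\Big]\right).
\]
Here the two congruence hypotheses enter decisively. The condition $\sum_\delta \delta r_\delta\equiv 0\pmod{24}$ controls the first (``$a+d$'') phase; indeed, in the degenerate case $c=0$ it alone already forces $\mu(\gamma)=1=\chi(1)$, which is a clean consistency check. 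For the main case I would apply Dedekind reciprocity to each $s(d,c/\delta)$, which produces complementary sums $s(c/\delta,d)$ together with terms involving $\tfrac{c/\delta}{d}$; writing $c/\delta=(c/N)(N/\delta)$ shows these are governed by $\sum_\delta (N/\delta) r_\delta\equiv 0\pmod{24}$, and this is exactly why the second congruence is phrased with $N/\delta$. Together the two conditions force every non-quadratic contribution to be an integer multiple of $2\pi i$ and hence trivial, leaving only a quadratic phase carried by the sums $s(c/\delta,d)$ with denominator $d$. Identifying this surviving phase with the Kronecker symbol $\big(\tfrac{(-1)^k s}{d}\big)$ via the standard Gauss-sum evaluation of the theta/eta multiplier—with the factor $(-1)^k$ emerging from the constant $-\tfrac14\sum_\delta r_\delta=-k/2$ term and the branch normalization of $\{-i(cz+d)\}^{1/2}$—completes the argument. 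This Dedekind-sum and reciprocity bookkeeping is where all the number-theoretic weight lies, and it is the sole point at which the divisibility-by-$24$ conditions are indispensable; the residual sign cases $c\le 0$ and the verification that $\chi$ is a genuine Nebentypus character on $\Gamma_0(N)$ then follow routinely from the same formulas.
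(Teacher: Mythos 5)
You should first note that the paper contains no proof of this statement at all: Theorem \ref{thm2.3} is quoted verbatim from Ono's book \cite[Theorem 1.64]{Ono2004}, which itself attributes the result to Gordon--Hughes and Newman, so your attempt can only be judged on its own merits, not against a proof in the paper. Your skeleton is the classical one and is correct as far as it goes: Dedekind's functional equation is stated correctly for the branch $(cz+d)^{1/2}$; the conjugation identity $\delta(\gamma z)=\gamma_{\delta}(\delta z)$ with $\gamma_{\delta}=\begin{bmatrix} a & b\delta\\ c/\delta & d \end{bmatrix}\in\mathrm{SL}_2(\mathbb{Z})$ is valid precisely because $\delta\mid N\mid c$; the hypothesis $k\in\mathbb{Z}$ does let the half-integral automorphy factors combine into $(cz+d)^k$ (all factors carry the same $cz+d$, so the branch is consistent); and the reductions for $c=0$ and $-I$ are fine (for $-I$ one needs $\chi(-1)(-1)^k=1$, which holds since $s>0$). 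So the theorem is correctly reduced to the single identity $\mu(\gamma)=\left(\frac{(-1)^k s}{d}\right)$.

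The genuine gap is that this identity \emph{is} the theorem, and your sketch asserts it rather than proves it; moreover the mechanism you describe is demonstrably too coarse. Writing $\sum_{\delta\mid N}\delta r_{\delta}=24u$, the first phase in your expression for $\mu(\gamma)$ is $\exp\left(2\pi i\,\frac{(a+d)u}{c}\right)$, and after reciprocity the $\frac{c/\delta}{d}$ terms contribute $\exp\left(-2\pi i\,\frac{(c/N)v}{d}\right)$ where $\sum_{\delta\mid N}(N/\delta)r_{\delta}=24v$: both are nontrivial roots of unity for general $\gamma\in\Gamma_0(N)$, so it is false that the two $24$-divisibility conditions force every non-quadratic contribution to vanish term by term. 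A concrete check: for $g=\eta(z)^{24}$ (all hypotheses hold, $\mu\equiv 1$) and $\gamma=\begin{bmatrix} 2 & 1\\ 3 & 2 \end{bmatrix}$, the ``$(a+d)$'' phase is $e^{8\pi i/3}\neq 1$, and triviality of $\mu(\gamma)$ is only restored by the Dedekind-sum term $24\,s(2,3)=-\frac{4}{3}$. The cancellation is thus global, not termwise, and carrying it out requires exactly the ingredients your sketch omits: the relation $ad\equiv 1\pmod{c}$, a parity case split ($c$ odd versus $d$ odd), and either Petersson's explicit formula expressing $\varepsilon(\gamma)$ as a Jacobi symbol in $c,d$ times a $24$th root of unity given by a polynomial in $a,b,c,d$, or Newman's congruences for $12c\,s(d,c)$ modulo $8$; with such a formula the product over $\delta$ collapses to $\left(\frac{s}{d}\right)$ (using $\gcd(\delta,d)=1$, since $\delta\mid c$) and the two congruence hypotheses together with $N\mid c$ are what trivialize the exponential and produce $\left(\frac{(-1)^k}{d}\right)$. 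In short: right strategy, known to succeed, but the decisive computation --- the entire number-theoretic content --- is missing, and the stated intermediate claim about termwise vanishing is incorrect.
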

	
	\begin{theorem} \cite[Theorem 1.65]{Ono2004} \label{thm2.4}
		Let $c,d$ and $N$ be positive integers with $d|N$ and $\gcd(c,d)=1$. If $f$ is an eta-quotient satisfying the conditions of Theorem \ref{thm2.3} for $N$, then the order of vanishing of $f(z)$ at the cusp $\frac{c}{d}$ is
		\begin{align*}
			\frac{N}{24}\sum\limits_{\delta|N} \frac{\gcd(d, \delta)^2 r_{\delta}}{\gcd(d, \frac{N}{ d} )d \delta}.
		\end{align*}
	\end{theorem}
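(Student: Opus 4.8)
The plan is to encode the asserted congruences as the vanishing modulo $u$ of a single holomorphic modular form and then collapse the infinite family into the stated finite check via Sturm's bound, following the Radu--Sellers strategy of \cite{Radu2011}. First I would record the eta-quotient form of the generating function: specializing \eqref{eq501} to $\ell=p$,
\[
\sum_{n\ge 0}B_p(n)q^n=\frac{(q^p;q^p)_\infty^2}{(q;q)_\infty^2}=q^{-(p-1)/12}\,\frac{\eta(pz)^2}{\eta(z)^2},
\]
so that $G(z):=\eta(pz)^2/\eta(z)^2$ is a weight-$0$ modular function on $\Gamma_0(p)$ (with the character furnished by Theorem \ref{thm2.3}) whose $q$-expansion is $\sum_{n\ge 0}B_p(n)q^{\,n-(p-1)/12}$. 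Here $r_1=-2$, $r_p=2$, so $\sum_{\delta\mid p}\delta r_\delta=2(p-1)$; this is the source of the shift in $P(t)$, since $\frac{(s-1)(p-1)}{12}=\frac{s-1}{24}\sum_{\delta}\delta r_\delta$.

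Next I would sieve onto the relevant residues. The set $P(t)$ is precisely the orbit of $t$ under the permutation of residues modulo $m$ induced on $G$ by the coset action of $\Gamma_0(N)$, namely $t\mapsto ts+\frac{(s-1)(p-1)}{12}\pmod m$ as $[s]_{24m}$ ranges over the units of $\mathbb{Z}/24m\mathbb{Z}$. Because $P(t)$ is a full orbit, the sieved series
\[
\Psi(z):=\sum_{\substack{n\ge 0\\ (n\bmod m)\in P(t)}}B_p(n)\,q^{\,n-(p-1)/12}
\]
is invariant (up to character) under $\Gamma_0(N)$ for a suitable level $N$ assembled from $24$, $m$, and, when $p\nmid m$, the extra prime $p$; the bookkeeping data $\kappa$, $\hat p$, $A_t$, $\epsilon_p$ are exactly the quantities controlling $N$, the cusp widths, and the orbit. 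I would then clear the pole at $\infty$ and the fractional $q$-power by multiplying by an eta-quotient $E(z)=\prod_{\delta\mid N}\eta(\delta z)^{r_\delta}$ chosen so that $F:=E\cdot\Psi\in M_k(\Gamma_0(N),\chi)$ is holomorphic of integral weight $k$ with integer Fourier coefficients: the integrality together with the two conditions $\sum\delta r_\delta\equiv\sum\frac{N}{\delta}r_\delta\equiv 0\pmod{24}$ are arranged through Theorem \ref{thm2.3}, and nonnegativity of the order at each cusp $c/d$ is verified through the formula of Theorem \ref{thm2.4}. The hypothesis $A_t\mid p^{\epsilon_p}p_1\cdots p_g$ is exactly what forces these cusp orders to remain nonnegative for this choice of $N$, with $\epsilon_p$ recording whether $p$ contributes a new prime to the level.

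With $F\in M_k(\Gamma_0(N),\chi)$ having $u$-integral coefficients, Sturm's theorem gives $F\equiv 0\pmod u$ as soon as its coefficients vanish modulo $u$ up to $\big\lfloor\frac{k}{12}[\Gamma:\Gamma_0(N)]\big\rfloor$, where $[\Gamma:\Gamma_0(N)]=N\prod_{\ell\mid N}(1+\frac1\ell)$. Since $N$ acquires the factor $p$ exactly when $p\nmid m$, this index produces the product $(p+1)^{\epsilon_p}(p_1+1)\cdots(p_g+1)$, while the prefactor $\frac{m(p-1)}{12}$ comes from the weight and the offset. Translating the vanishing of the coefficients of $F=E\cdot\Psi$ back through the shift $-(p-1)/12$ and the reindexing $n\mapsto mn+t'$ (legitimate because $E$ has unit leading coefficient, so convolution by $E$ is invertible modulo $u$) converts ``the first $\big\lfloor\frac{k}{12}[\Gamma:\Gamma_0(N)]\big\rfloor$ coefficients of $F$ vanish'' into exactly ``$B_p(mn+t')\equiv 0\pmod u$ for all $t'\in P(t)$ and $0\le n\le\big\lfloor\frac{m(p-1)}{12}\big((p+1)^{\epsilon_p}(p_1+1)\cdots(p_g+1)-p\big)-\frac{p-1}{12m}\big\rfloor$''. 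Thus the hypothesis forces $F\equiv 0\pmod u$, and reading off all its coefficients yields $B_p(mn+t')\equiv 0\pmod u$ for every $t'\in P(t)$ and every $n\ge 0$.

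The main obstacle is the construction in the second step: selecting $E$ and $N$ so that $F$ is holomorphic at \emph{all} cusps, not merely at $\infty$. This requires running the order formula of Theorem \ref{thm2.4} over every cusp $c/d$ of $\Gamma_0(N)$ and showing each order is $\ge 0$, which is precisely where the divisibility $A_t\mid p^{\epsilon_p}p_1\cdots p_g$ is used; simultaneously one must pin down $k$ and $N$ so that $\frac{k}{12}[\Gamma:\Gamma_0(N)]$ reproduces the stated floor expression, including the $-p$ inside the product (subtracting the contribution already carried at the leading cusp) and the correction $-\frac{p-1}{12m}$ (coming from the shift after reindexing by $m$). Once $k$ and $N$ are fixed the reconciliation of the bound is routine arithmetic, so verifying nonnegativity at the cusps other than $\infty$ and identifying the exact level $N$ are the technical heart of the argument.
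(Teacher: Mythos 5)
You have proved the wrong statement. The result in question is Theorem \ref{thm2.4}, the closed formula for the order of vanishing of an eta-quotient $f(z)=\prod_{\delta\mid N}\eta(\delta z)^{r_\delta}$ at a cusp $\frac{c}{d}$ of $\Gamma_0(N)$. What you have written is instead an outline of the proof of Theorem \ref{mainthmRaduSellers}, the Radu--Sellers criterion for congruences $B_p(mn+t')\equiv 0\pmod u$: every ingredient you use --- the generating function \eqref{eq501} specialized to $\ell=p$, the orbit set $P(t)$, the quantities $\kappa$, $\hat p$, $A_t$, $\epsilon_p$, the index $[\Gamma:\Gamma_0(N)]=N\prod_{\ell\mid N}(1+\frac 1\ell)$, and the Sturm-type finite check --- belongs to that theorem, and nowhere do you derive the displayed cusp-order expression. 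Indeed your argument explicitly \emph{invokes} ``the formula of Theorem \ref{thm2.4}'' to verify holomorphy at the cusps, so read as a proof of Theorem \ref{thm2.4} it is circular, and read as what it plainly is, it addresses a different theorem of the paper (whose actual proof, incidentally, proceeds not via Sturm's bound directly but via Radu's Lemma \ref{finite check} together with Lemmas \ref{lemdelta}--\ref{lembound} and Wang's double-coset Lemma \ref{square-free}).

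For the record, the paper offers no proof of Theorem \ref{thm2.4} at all: it is quoted from \cite[Theorem 1.65]{Ono2004} and used as a black box, e.g.\ in the proof of Theorem \ref{mainthm11} to check holomorphy of $B_{i,j,\ell}(z)$ at the cusps of $\Gamma_0(2^5 3^2\ell)$. A genuine proof runs along entirely different lines from anything in your write-up: for the cusp $\frac{c}{d}$ one chooses $\gamma\in\mathrm{SL}_2(\mathbb{Z})$ sending $\infty$ to $\frac{c}{d}$, factors each matrix $\left(\begin{smallmatrix}\delta & 0\\ 0 & 1\end{smallmatrix}\right)\gamma$ as an element of $\mathrm{SL}_2(\mathbb{Z})$ times an upper-triangular matrix, applies the transformation law of the Dedekind eta function together with $\eta(z)=q^{1/24}\prod_{n\geq 1}(1-q^n)$ to read off the leading exponent $\frac{\gcd(d,\delta)^2}{24\,\delta}$ of each factor $\eta(\delta\gamma z)$ in the variable $q=e^{2\pi i z}$, sums over $\delta\mid N$ with multiplicities $r_\delta$, and finally rescales by the width $\frac{N}{d\,\gcd\left(d,\frac{N}{d}\right)}$ of the cusp (using $\gcd(d^2,N)=d\,\gcd\left(d,\frac{N}{d}\right)$ for $d\mid N$) to land on the stated quantity $\frac{N}{24}\sum_{\delta\mid N}\frac{\gcd(d,\delta)^2 r_\delta}{\gcd\left(d,\frac{N}{d}\right) d\,\delta}$. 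None of these steps --- the eta transformation formula, the matrix factorization, the width normalization --- appear in your proposal, so the entire argument the statement requires is missing.
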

	Suppose that $g(z)$ is an eta-quotient satisfying the conditions of Theorem \ref{thm2.3} and that the associated weight $k$ is a positive integer. If $g(z)$ is holomorphic at all of the cusps of $\Gamma_0(N)$, then $g(z) \in M_{k}(\Gamma_0(N), \chi)$. Theorem \ref{thm2.4} gives the necessary criterion for determining orders of an eta-quotient at cusps. In the proofs of our results, we use Theorems \ref{thm2.3} and \ref{thm2.4} to prove that $g(z) \in M_{k}(\Gamma_0(N), \chi)$ for certain eta-quotients $g(z)$ we consider in the sequel.
	
	We shall now mention a result of Serre \cite[P. 43]{Serre1974} which will be used later. 
	
	\begin{theorem}\label{thm2.5}
		Let $g(z) \in M_{k}(\Gamma_{0}(N), \chi)$ has Fourier expansion
		$$ g(z)= \sum_{n=0}^{\infty} b(n) q^n \in \mathbb{Z}[[q]].$$
		Then for a positive integer $r$,  there is a constant $\alpha>0$ such that
		$$\#\{ 0 < n \leq X: b(n) \not \equiv 0 \pmod{r}\} = \mathcal{O}\left( \frac{X}{(\log X)^{\alpha}}\right).$$
		Equivalently 
		\begin{align}\label{2e1}
			\begin{split}
				\lim\limits_{X \to \infty} \frac{\#\{ 0 < n \leq X: b(n) \not \equiv 0 \pmod{r}\}}{X}= 0.
		\end{split}	\end{align}
	\end{theorem}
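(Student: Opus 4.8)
This is Serre's lacunarity theorem, and the plan is to reduce to a prime-power modulus, exhibit a positive-density set of primes whose Hecke operators annihilate $g$ modulo that prime power, turn this into a divisibility criterion for the coefficients $b(n)$, and finish with an elementary sieve. First I would reduce to the case $r=\ell^{\nu}$ of a single prime power: writing $r=\ell_1^{\nu_1}\cdots \ell_s^{\nu_s}$, one has $b(n)\equiv 0\pmod r$ if and only if $b(n)\equiv 0\pmod{\ell_i^{\nu_i}}$ for every $i$, so the exceptional set for $r$ is the union of the exceptional sets for the $\ell_i^{\nu_i}$, and a finite union of sets of size $O(X/(\log X)^{\alpha_i})$ is again of size $O(X/(\log X)^{\alpha})$ with $\alpha=\min_i\alpha_i$.

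The heart of the argument is to produce a set $S$ of primes of positive density $\delta>0$ with $g\mid T_p\equiv 0\pmod{\ell^{\nu}}$ for all $p\in S$, where $T_p$ is the $p$-th Hecke operator. Enlarging the level to absorb $\ell$, I would decompose $g$ into Hecke eigencomponents: to each cuspidal eigenform Deligne attaches an odd representation $\rho_i\colon\mathrm{Gal}(\overline{\mathbb{Q}}/\mathbb{Q})\to\mathrm{GL}_2(\mathbb{Z}_\ell)$ with $\mathrm{tr}\,\rho_i(\mathrm{Frob}_p)$ equal to the $p$-th eigenvalue, while each Eisenstein eigencomponent has eigenvalue of the form $\psi_1(p)+\psi_2(p)p^{k-1}$ with $\psi_1\psi_2=\chi$. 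Because every $\rho_i$ is odd, complex conjugation has trace $0$ in each of them, and the parity relation $\chi(-1)=(-1)^k$ forces every Eisenstein eigenvalue to vanish at complex conjugation as well. Hence, applying the Chebotarev density theorem to the compositum of the relevant fixed fields (including the cyclotomic fields attached to the $\psi_j$), the primes $p$ whose Frobenius is the class of complex conjugation form a positive-density set $S$, and for every $p\in S$ all eigenvalues occurring in $g$ are $\equiv 0\pmod{\ell^{\nu}}$, so that $g\mid T_p\equiv 0\pmod{\ell^{\nu}}$.

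Next I would convert annihilation into divisibility and run a sieve. The Hecke operator acts on Fourier coefficients by $(g\mid T_p)(n)=b(pn)+\chi(p)p^{k-1}b(n/p)$, so for $p\in S$ and $p\nmid n$ this forces $b(pn)\equiv 0\pmod{\ell^{\nu}}$; equivalently $b(m)\equiv 0\pmod{\ell^{\nu}}$ whenever some prime $p\in S$ divides $m$ exactly to the first power. Thus the exceptional set $\{m: b(m)\not\equiv 0\pmod{\ell^{\nu}}\}$ is contained in the set $T$ of integers $m$ with $v_p(m)\neq 1$ for every $p\in S$. Writing such $m=ab$ with $a$ free of prime factors in $S$ and $b=\prod_{p\in S}p^{v_p(m)}$ squarefull, and using the standard estimate that the count of $S$-free integers up to $Y$ is $O(Y/(\log Y)^{\delta})$ together with the convergence of $\sum b^{-1}$ over squarefull $b$, I would obtain $\#\{0<m\le X: m\in T\}=O(X/(\log X)^{\delta})$, giving the theorem with $\alpha=\delta$.

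The main obstacle is the middle step, guaranteeing the positive-density set of annihilating primes, which is exactly where the depth of Serre's theorem resides. It requires the existence of the mod-$\ell^{\nu}$ Galois data, the simultaneous trace vanishing at complex conjugation across all cuspidal constituents, and a careful treatment of the coefficient fields of the eigenforms and of the non-semisimple (oldform) part of the Hecke action, since the eigencomponent decomposition is not literally a direct sum over $\mathbb{Z}/\ell^{\nu}$. Serre's original argument sidesteps the explicit Galois representations by studying directly the finite ring generated by the operators $T_p$ on the reduction of the space of modular forms modulo $\ell^{\nu}$ and proving the required equidistribution of Frobenius there; either route ultimately rests on Chebotarev.
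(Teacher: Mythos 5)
The paper does not prove this statement at all: it is quoted as a known theorem of Serre, with a citation to \cite[P.~43]{Serre1974}, and is then used as a black box in the proof of Theorem \ref{mainthm11}. So there is no internal argument to compare yours against; the relevant comparison is with Serre's own proof, and your sketch is a faithful reconstruction of it. The reduction to a prime-power modulus $\ell^{\nu}$, the production of a positive-density set $S$ of primes with $g\mid T_p\equiv 0\pmod{\ell^{\nu}}$ (Deligne's representations for the cuspidal constituents, oddness giving trace $0$ at complex conjugation, the parity relation $\chi(-1)=(-1)^k$ killing the Eisenstein eigenvalues $\psi_1(p)+\psi_2(p)p^{k-1}$ at the same class, then Chebotarev on a compositum), the conversion to the statement that $b(m)\equiv 0\pmod{\ell^{\nu}}$ whenever some $p\in S$ divides $m$ exactly once, and the concluding $S$-free/squarefull sieve yielding $O\left(X/(\log X)^{\delta}\right)$ is precisely the architecture of Serre's argument. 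Two remarks. First, a small technical point: to make the Eisenstein eigenvalue and the factor $\chi(p)p^{k-1}$ in the Hecke recursion vanish modulo $\ell^{\nu}$ you need Frobenius $=$ complex conjugation to force $p\equiv -1\pmod{\ell^{\nu}N}$, so the compositum must contain $\mathbb{Q}(\zeta_{\ell^{\nu}N})$, not merely the cyclotomic fields attached to the characters $\psi_j$; this is implicit in your construction but should be said. Second, as you concede, the heart of the matter (the positive-density annihilation, including the non-semisimplicity of the Hecke action mod $\ell^{\nu}$ and the coefficient fields of the eigenforms) is invoked from the literature rather than proved; that is unavoidable at this depth and matches Serre's actual treatment via the finite Hecke ring, but it means your text is an outline of a known proof rather than a self-contained argument, which for the purposes of this paper leaves you in essentially the same position as the authors' bare citation.
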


	We finally recall the definition of Hecke operators and a few relavent results. Let $m$ be a positive integer and $g(z)= \sum \limits_{n= 0}^{\infty}a(n) q^{n}\in M_{k}(\Gamma_0(N), \chi)$. Then the action of Hecke operator $T_m$ on $f(z)$ is defined by
	\begin{align*}
		g(z)|T_{m} := \sum \limits_{n= 0}^{\infty} \left(\sum \limits_{d|\gcd(n,m)} \chi(d) d^{k-1} a\left(\frac{mn}{d^2}\right)\right)q^{n}.
	\end{align*}
	In particular, if $m=p$ is a prime, we have
	\begin{align*}
		g(z)|T_p := \sum \limits_{n= 0}^{\infty}\left( a(pn) + \chi(p) p^{k-1} a\left(\frac{n}{p}\right)\right)q^{n}.
	\end{align*}
	We note that $a(n)=0$ unless $n$ is a non-negative integer.

		\section{Proof of Theorem \ref{mainthm11}}
		Let $\ell=p_1^{a_1}p_2^{a_2}\ldots p_m^{a_m},$ where $p_i$'s
		are primes.
		From \eqref{eq501}, we get
		\begin{small}
			\begin{equation}\label{eq701}
				\sum_{n=0}^{\infty} B_{\ell}(n) q^n =  \frac{(q^{\ell};q^{\ell})^{2}_{\infty}}{(q;q)^2_{\infty}}.
			\end{equation}
		\end{small}
		Note that for any prime $p$ and positive integers $j, k$  we have
		\begin{small}
			\begin{equation}\label{eq702}
				(q;q)_{\infty}^{p^j}\equiv (q^p;q^p)_{\infty}^{p^{j-1}} \pmod{p^j} \implies 	(q^k;q^k)_{\infty}^{p^j}\equiv (q^{kp};q^{kp})_{\infty}^{p^{j-1}} \pmod{p^j}.
			\end{equation}
		\end{small}
		For a positive integer $i,$ we define $$A_i(z):= \frac{\eta(24z)^{p_i^{a_i}}}{\eta \left(24p_i^{a_i}z\right)} .$$
		Using \eqref{eq702}, we get 
		\begin{small}
			\begin{equation*}
				A_i^{p_i^j}(z):= \frac{\eta(24z)^{p_i^{a_i+j}}}{\eta \left(24p_i^{a_i}z\right)^{p_i^j}} \equiv 1 \pmod{p_i^{j+1}}.
			\end{equation*}
		\end{small}
		Define 
		$$B_{i,j,\ell}(z)= \left(\frac{\eta^2(24\ell z)}{\eta^2(24z)}\right)A_i^{p_i^j}(z) =\frac{\eta^{2}(24\ell z) \eta^{(p_i^{a_i+j}-2)}(24z)}{\eta^{p_i^j} \left(24p_i^{a_i}z\right)} .$$
		
		On modulo $p_i^{j+1},$ we get
		\begin{small}
			\begin{equation}\label{eq703}
				B_{i,j,\ell}(z)= \frac{\eta(24 \ell z)^{2}}{\eta^2(24z)}= q^{2\ell-2} \frac{(q^{24\ell};q^{24\ell})^{2}_{\infty}}{(q^{24};q^{24})^2_{\infty}}.
			\end{equation}
		\end{small}
		Combining \eqref{eq701} and \eqref{eq703} together, we obtain
		\begin{small}
			\begin{equation}\label{eq704}
				B_{i,j,\ell}(z)\equiv  q^{2\ell-2} \frac{(q^{24\ell};q^{24\ell})^{2}_{\infty}}{(q^{24};q^{24})^2_{\infty}} \equiv \sum_{n=0}^{\infty} B_{\ell}(n)q^{24n+2\ell-2} \pmod{p_i^{j+1}}.
			\end{equation}
		\end{small}
	
		Next, we prove that $B_{i,j,\ell}(z)$ is a modular form. Applying Theorem \ref{thm2.3}, we first estimate the level of eta quotient $B_{i,j,\ell}(z)$ . The level of $ B_{i,j,\ell}(z) $ is $N=24p_1^{a_1}p_2^{a_2}\ldots p_m^{a_m} M,$ where $M$ is the smallest positive integer which satisfies 
		\begin{small}
			\begin{align*}
				24 \ell M \left[ \frac{2}{24\ell} + \frac{p_i^{a_i+j}-2  }{2^3 3} - \frac{p_i^{j}}{24p_i^{a_i}} \right]\equiv 0 \pmod{24} 
				\implies 2M\left[ 1- \ell + \ell  p_i^{j}   \frac{\left(p_i^{2a_i}-1 \right) }{2p_i^{a_i}} \right]\equiv 0 \pmod{24}.
			\end{align*}
		\end{small}
	
		Thus, $M=12$ and the level of $B_{i,j,\ell}(z)$  is $N=2^5 3^2 \ell $. The cusps of $\Gamma_{0}(2^5 3^2\ell)$ are given by fractions $\frac{c}{d}$ where $d|2^5 3^2\ell$ and $\gcd(c,d)=1.$ By using Theorem \ref{thm2.4} , we have that $B_{i,j,\ell}(z) $ is holomorphic at a cusp $\frac{c}{d}$ if and only if 
		
			\begin{align}\label{eq502}
				&2 \frac{  \gcd^2(d, 24 \ell)}{24 \ell}+ \left(p_i^{a_i+j}-2\right) \frac{\gcd^2(d, 24)}{24} -p_i^j \frac{\gcd^2(d, 24p_i^{a_i})}{24p_i^{a_i}} \geq 0 \\ \notag
				& \iff L:= 2  + \left(p_i^{a_i+j}-2\right) \ell  G_1 -  \frac{p_i^j}{p_i^{a_i}} \ell  G_2 \geq 0,
			\end{align}
		
		where $G_1= \frac{\gcd^2(d, 24)}{\gcd^2(d, 24 \ell )}$ and  $G_2= \frac{\gcd^2(d, 24 p_i^{a_i})}{\gcd^2(d, 24 \ell)} .$ 
		Let $d$ be a divisor of $ 2^5 3^2 \ell$. We can write $d= 2^{r_1} 3^{r_2} p_i^{k} t $ where $ 0 \leq r_1 \leq 5$,   $ 0 \leq r_2 \leq 2$,  $ 0 \leq k \leq a_i$  and $t|\ell $ but $p_i \nmid t$.
		Next, we find all possible value of divisors of $2^53^2\ell$ to compute equation \eqref{eq502}.
		
		 When $d= 2^{r_1}3^{r_2}t p_i^{k}: 0 \leq r_1\leq 5, 0 \leq r_2\leq 2, t|\ell, \hbox{but} p_i\nmid t, 0 \leq k \leq a_i.$ Then $G_1= \frac{1}{t^2p_i^{2k}}, $ $G_2= \frac{1}{t^2}.$
		Then equation \eqref{eq502} will be
		\begin{align}\label{eq503}
		L= 2  + \left(p_i^{a_i+j}-2\right) \ell \frac{1}{t^2p_i^{2k}}  -  \frac{p_i^j}{p_i^{a_i}} \ell \frac{1}{t^2} \\\label{eq504}
		= 2+ \frac{\ell}{t^2} \left[ p_i^j\left( \frac{{p_i^{a_i}}}{{p_i^{2k}}}- \frac{{1}}{p_i^{a_i}} \right) - \frac{2}{p_i^{2k}} \right].
			\end{align}
		When $k= a_i,$ then $L=  2 \left(1- \frac{\ell}{t^2p_i^{2a_i}}\right) \geq 0$ as $p_i^{2a_i} \geq \ell$ for any $i.$ When $0 \leq k < a_i,$ equation \eqref{eq504} will be non negative if 
		\begin{align*}
		 \left[ p_i^j\left( \frac{{p_i^{a_i}}}{{p_i^{2k}}}- \frac{{1}}{p_i^{a_i}} \right) - \frac{2}{p_i^{2k}} \right] \geq 0.
		\end{align*}
	Note that maximum value of $k$ is $a_i-1.$ 
	Thus, \begin{align*}
		\left[ p_i^j\left( \frac{{p_i^{a_i}}}{{p_i^{2k}}}- \frac{{1}}{p_i^{a_i}} \right) - \frac{2}{p_i^{2k}} \right] \geq 2 \left[ \left( \frac{{p_i^{a_i}}}{{p_i^{2k}}}- \frac{{1}}{p_i^{a_i}} \right) - \frac{1}{p_i^{2k}} \right] = 2 \frac{p_i^{2a_i} - p_i^{2k}- p_i^{a_i} }{p_i^{2k+a_i}}.
	\end{align*}
		 Observe that  $p_i^{2a_i} - p_i^{2k}- p_i^{a_i} \geq p_i^{a_i} \left( p_i^{a_i} -  \frac{p_i^{a_i}}{p_i^2} -1 \right) \geq p_i^{a_i} \left( p_i^{a_i} \left( 1- \frac{1}{p_i^2}\right)  -1 \right) \geq 0.$
		
		Therefore, $B_{i,j,\ell}(z) $ is holomorphic at every cusp $\frac{c}{d}.$
		Using Theorem \ref{thm2.3}, we compute the weight of $B_{i,j,t}(z) $ is $k= \frac{2+ \left(p_i^{a_i+j}-2\right)-p_i^j}{2}=\frac{ p_i^j\left(p_i^{a_i}-1\right)}{2}$ which is a positive integer. The associated character for $B_{i,j,\ell}(z) $ is $$\chi= \left( \frac{(-1)^{k} \left(24 \ell \right)^{2} 24^{\left(p_i^{a_i+j}-2\right)} \left(24p_i^{a_i}\right)^{-p_i^j}}{\bullet}\right).$$ 
		Thus $ B_{i,j,\ell}(z) \in  M_{k}(\Gamma_{0}(N), \chi)$ where $k$, $N$ and $\chi$ are as above. Applying Theorem \ref{thm2.3} and Theorem \ref{thm2.5}, we obtain that the Fourier coefficients of $B_{i,j,\ell}(z) $ satisfies \eqref{2e1} which implies that the Fourier coefficient of $B_{i,j,\ell}(z) $ are almost always divisible by $ p_i^j$. Hence, from $\eqref{eq704}$, we conclude that $B_{\ell}(n)$ are almost always divisible by $p_i^j$. This completes the proof of Theorem \ref{mainthm11}.

	\section{Proof of Congruences for $B_3(n)$}

	\begin{proof}[Proof of Theorem \ref{thm8}] 
		From equation \eqref{eq501}, we have
		\begin{equation}\label{eq800}
			\sum_{n=0}^{\infty} B_{3}(n)q^{n} = \frac{(q^3;q^3)^2_{\infty}}{(q;q)^2_{\infty}}.
		\end{equation}
	Note that for any prime number $p$, we have
	\begin{equation}\label{eq800a}
		(q^p;q^p)_{\infty} \equiv (q;q)^{p}_{\infty} \pmod p.
	\end{equation}
From \eqref{eq800} and \eqref{eq800a}, we get
\begin{equation}\label{eq800b}
	\sum_{n=0}^{\infty} B_{3}(n)q^{n} = \frac{(q^3;q^3)^2_{\infty}}{(q;q)^2_{\infty}} \equiv (q;q)^{4}_{\infty} \pmod 3 .
\end{equation}
Thus, we have
		\begin{equation}\label{eq801}
		\sum_{n=0}^{\infty} B_{3}(n)q^{6n+1} \equiv q (q^6;q^6)^{4}_{\infty}  \equiv \eta^4(6z)\pmod3 .
		\end{equation}
	
		
		By using Theorem \ref{thm2.3}, we obtain $\eta^4(6z) \in S_2(\Gamma_{0}(36), \left(\frac{6^4}{\bullet}\right)). $ Thus $\eta^4(6z) $ has a Fourier expansion i.e.
		\begin{equation}\label{eq802}
			\eta^4(6z)= q-4q^7+ 2 q^{13}+ \cdots= \sum_{n=1}^{\infty}  a(n) q^n .
		\end{equation}
		Thus, $a(n)=0$ if $n \not \equiv 1 \pmod 6,$ for all $n \geq 0.$ From \eqref{eq801} and \eqref{eq802}, comparing the coefficient of $q^{6n+1}$, we get
		\begin{equation}\label{eq803}
			B_{3}(n) \equiv a(6n+1) \pmod 3.
		\end{equation}
		Since $ \eta^4(6z)$ is a Hecke eigenform (see \cite{Martin1996}), it gives
		$$\eta^4(6z)|T_p= \sum_{n=1}^{\infty} \left(a(pn)+ p \cdot  \left(\frac{6^4}{p}\right) a\left(\frac{n}{p}\right)\right) q^n = \lambda(p)\sum_{n=1}^{\infty} a(n)q^n.$$ Note that the Legendre symbol $ \left(\frac{6^4}{p}\right) = 1.$ Comparing the coefficients of $q^n$ on both sides of the above equation, we get
		\begin{equation}\label{eq804}
			a(pn)+ p \cdot a\left(\frac{n}{p}\right) = \lambda(p) a(n).
		\end{equation}
		Since $a(1)=1$ and $a(\frac{1}{p})=0,$ if we put $n=1$ in the above expression, we get $a(p)=\lambda(p).$ As $a(p)=0$ for all $p \not \equiv 1 \pmod 6$ this implies that $\lambda(p)=0$ for all $p \not \equiv 1 \pmod 6.$ From \eqref{eq804} we get that for all $p \not \equiv 1 \pmod 6 $  
		\begin{equation}\label{eq805}
			a(pn)+ p \cdot a\left(\frac{n}{p}\right) =0.
		\end{equation}
		Now, we consider two cases here. If $p \not| n,$ then 
		replacing $n$ by $pn+r$ with $\gcd(r,p)=1$ in \eqref{eq805}, we get
		\begin{equation}\label{eq806}
			a(p^2n+ pr)=0 .
		\end{equation}
	
	Replacing $n$ by $6n-pr+1$ in \eqref{eq806} and using \eqref{eq803}, we get 
		\begin{equation}\label{eq807}
			B_{3}\left(p^2n +pr \frac{(1- p^2)}{6} +  \frac{p^2-1}{6}\right) \equiv 0 \pmod3.
		\end{equation}
		Since $p \equiv 5 \pmod 6,$ we have $6| (1-p^2)$ and  $\gcd\left( \frac{(1- p^2)}{6} , p\right)=1.$ When $r$ runs over a residue system excluding the multiples of $p$, so does $ r \frac{(1- p^2)}{6}.$
		Thus for $p \nmid j,$ \eqref{eq807} can be written as
		\begin{equation}\label{eq808}
			B_{3}\left(p^2n+pj+ \frac{p^2-1}{6}\right) \equiv 0 \pmod3.
		\end{equation}
	Now, we consider the second case, when $p |n.$ Here replacing $n$ by $pn$ in \eqref{eq805}, we get
		\begin{equation}\label{eq809}
			a(p^2n) =  -p \cdot a\left(n\right).
		\end{equation}
		Further, replacing $n$ by $6n+1$ in \eqref{eq809} we obtain
		\begin{equation}\label{eq810}
			a(6p^2n+ p^2) = -p \cdot  a\left(6n+1\right).
		\end{equation}
		Using \eqref{eq803} in \eqref{eq810}, we get
		\begin{equation}\label{eq811}
			B_{3}\left(p^2n+ \frac{p^2-1}{6} \right) = - p \cdot B_{3} \left(n\right).
		\end{equation}
		Let $p_i \geq 5$ be primes such that $p_i \equiv 5 \pmod 6.$ Further note that
		$$ p_1^2 p_2^2 \cdots p_k^2 n + \frac{p_1^2 p_2^2 \cdots p_k^2-1}{6}= p_1^2 \left(p_2^2 \cdots p_k^2 n + \frac{ p_2^2 \cdots p_k^2 -1}{6} \right)+ \frac{p_1^2-1}{6} .$$
		Repeatedly using \eqref{eq811} and \eqref{eq808}, we get
		\begin{align*}
			&	B_{3} \left( p_1^2 p_2^2 \cdots p_k^2 p_{k+1}^2 n + \frac{p_1^2 p_2^2 \cdots p_k^2 p_{k+1}\left(6j+ p_{k+1}\right)-1}{6} \right) \\
			& \equiv (- p_1) \cdot  B_{3} \left(p_2^2 \cdots p_k^2 p_{k+1}^2 n + \frac{ p_2^2 \cdots p_k^2 p_{k+1}\left(6j+ p_{k+1}\right) -1}{6} \right) \equiv \cdots\\
			& \equiv (- 1)^k (p_1 p_2 \cdots p_k)^k B_{3} \left(p_{k+1}^2 n + p_{k+1} j + \frac{p^2_{k+1}-1}{6} \right) \equiv 0 \pmod3
		\end{align*}
		when $j \not \equiv 0 \pmod{ p_{k+1}}.$ This completes the proof of the theorem.
	\end{proof}
	
		\begin{proof}[Proof of Theorem \ref{thm9}] From \eqref{eq805}, we get that for any prime $p \equiv 5 \pmod 6$
		\begin{equation}\label{eq812}
			a(pn)= -p \cdot a\left(\frac{n}{p}\right).
		\end{equation}  Replacing $n$ by $6n+5,$ we obtain
		\begin{equation}\label{eq813}
			a(6pn+5p)= - p \cdot a\left(\frac{6n+5}{p}\right).
		\end{equation}
		Next replacing $n$ by $p^kn+r$ with $p \nmid r$ in \eqref{eq813}, we obtain
		\begin{equation}\label{eq814}
			a\left(6 \left(p^{k+1}n+pr+ \frac{5p-1}{6} \right)+1\right)= (-p) \cdot  a\left(6\left(p^{k-1}n+ \frac{6r+5-p}{6p}\right)+1\right).
		\end{equation}
		Note that $\frac{5p-1}{6} $ and $\frac{6r+5-p}{6p}$ are integers. Using \eqref{eq814} and \eqref{eq803}, we get
		\begin{equation}\label{eq815}
			B_{3}\left(p^{k+1}n+pr+ \frac{5p-1}{6}\right)\equiv  (-p) \cdot B_{3}\left(p^{k-1}n+ \frac{6r+5-p}{6p} \right) \pmod3.
		\end{equation}
	\end{proof}
	
	\begin{proof}[Proof of Corollary \ref{coro4}]Let $p$ be a prime such that $p \equiv 5 \pmod 6.$ Choose a non negative integer $r$ such that $6r+5=p^{2k-1}.$ Substituting $ k$ by $2k-1$ in \eqref{eq815}, we obtain
		\begin{align*}
			B_{3}\left(p^{2k}n+ \frac{p^{2k}-1}{6} \right)&\equiv  (-p)  B_{3}\left(p^{2k-2}n+ \frac{p^{2k-2}-1}{6} \right)\\
			& \equiv \cdots \equiv \left(-p\right)^k  B_{3}(n) \pmod3.
		\end{align*}
	\end{proof}
\begin{proof} [Proof of Theorem \ref{Newmann1}]
Note that
\begin{equation}\label{eq800b*}
	\sum_{n=0}^{\infty} B_{3}(n)q^{n} \equiv  \frac{(q^3;q^3)_{\infty} (q^{3};q^{3})_{\infty}}{(q;q)^2_{\infty}} \equiv (q;q)_{\infty} (q^{3};q^{3})_{\infty} \pmod 3 .
\end{equation}
Define 
\begin{equation}\label{eq50}
	\sum_{n=0}^{\infty} c(n) q^{n}:= f_1f_3= (q;q)_{\infty} (q^{3};q^{3})_{\infty}.
\end{equation}
Using Newman result [\cite{Newmann1959},Theorem $3$] we find that if $p$ is a prime with $p \equiv 1 \pmod 6,$ then
\begin{equation}\label{eq51}
	c\left(pn+ \frac{p-1}{6}\right) = c\left(\frac{p-1}{6}\right)c(n)- (-1)^{\frac{p-1}{2}} \left( \frac{3}{p} \right) c\left(\frac{n}{p}- \frac{p-1}{6p}\right) .
\end{equation}

Observe that, if $p \nmid (6n+1), $ then $\left(\frac{n- \frac{p-1}{6}}{p} \right)$ is not an integer and
\begin{equation}\label{eq52}
	c\left(\frac{n}{p}- \frac{p-1}{6p}\right) =0 .
\end{equation}
From \eqref{eq51} and \eqref{eq52} we get that if $p \nmid (6n+1), $ then
\begin{equation}\label{eq53}
	c\left(pn+ \frac{p-1}{6}\right) = c\left(\frac{p-1}{6}\right)c(n) .
\end{equation}
Thus, if $p \nmid (6n+1) $ and $c\left(\frac{p-1}{6}\right) \equiv 0 \pmod 3,$ then for $n \geq 0,$
\begin{equation}\label{eq54}
	c\left(pn+ \frac{p-1}{6}\right) \equiv 0 \pmod 3 .
\end{equation}

Replacing $n$ by $ pn+ \frac{p-1}{6}$ in \eqref{eq51}, we get

\begin{equation}\label{eq55}
	c\left(p^2n+ \frac{p^2-1}{6}\right) = c\left(\frac{p-1}{6}\right)c\left(pn+ \frac{p-1}{6}\right)- (-1)^{\frac{p-1}{2}} \left( \frac{3}{p} \right) c\left(n\right) .
\end{equation}
Observe that from \eqref{eq55} if $c\left(\frac{p-1}{6}\right) \equiv 0 \pmod 3, $ then for $n \geq 0,$ we get
\begin{equation}\label{eq56}
	c\left(p^2n+ \frac{p^2-1}{6}\right) \equiv - (-1)^{\frac{p-1}{2}} \left( \frac{3}{p} \right) c\left(n\right) \pmod 3 .
\end{equation}
From \eqref{eq56} and by using mathematical induction, we get that if $c\left(\frac{p-1}{6}\right) \equiv 0 \pmod 3, $ then for $n \geq 0, $ and $k \geq 0, $ we get 
\begin{equation}\label{eq57}
	c\left(p^{2k}n+ \frac{p^{2 k}-1}{6}\right) \equiv \left( - (-1)^{\frac{p-1}{2}} \left( \frac{3}{p} \right) \right)^{k} c\left(n\right) \pmod 3 .
\end{equation}
Replacing $n$ by $pn+ \frac{p-1}{6}$ in \eqref{eq57} and using \eqref{eq54}, we obtain that if $p \nmid (6n+1)$ and $c\left(\frac{p-1}{6}\right) \equiv 0 \pmod 3,$ then for $n \geq 0$ and $k \geq 0,$ we get
\begin{equation}\label{eq58}
	c\left(p^{2 k +1}n+ \frac{p^{2 k+1}-1}{6}\right) \equiv 0 \pmod 3 .
\end{equation}
From \eqref{eq50} and \eqref{eq800b*}, we obtain that for $n \geq 0,$
\begin{equation}\label{eq59}
	B_{3}(n) \equiv c(n) \pmod 3 .
\end{equation}
Thus, combining \eqref{eq59} and \eqref{eq58}, we prove Theorem \ref{Newmann1}. \end{proof}

\section{Proof of Congruences for $B_5(n)$}

	\begin{proof}[Proof of Theorem \ref{thm10}] 
		From equation \eqref{eq501}, we have
		\begin{equation}\label{eq900}
			\sum_{n=0}^{\infty} B_{5}(n)q^{n} = \frac{(q^5;q^5)^2_{\infty}}{(q;q)^2_{\infty}}.
		\end{equation}
		From \eqref{eq900} and \eqref{eq800a}, we get
		\begin{equation}\label{eq900b}
			\sum_{n=0}^{\infty} B_{5}(n)q^{n} = \frac{(q^5;q^5)^2_{\infty}}{(q;q)^2_{\infty}} \equiv (q;q)^{8}_{\infty} \pmod 5 
		\end{equation} and hence
			\begin{equation}\label{eq901}
			\sum_{n=0}^{\infty} B_{5}(n)q^{3n+1} \equiv q (q^3;q^3)^{8}_{\infty}  \equiv \eta^8(3z)\pmod  5 .
		\end{equation}
		
		
		Applying Theorem \ref{thm2.3}, we obtain $\eta^8(3z) \in S_4(\Gamma_{0}(9), \left(\frac{3^8}{\bullet}\right)). $ Notice that $\eta^8(3z) $ has a Fourier expansion i.e.
		\begin{equation}\label{eq902}
			\eta^8(3z)= q-8q^4+ 20 q^{7}+ \cdots= \sum_{n=1}^{\infty}  b(n) q^n. 
		\end{equation}
		Thus, $b(n)=0$ if $n \not \equiv 1 \pmod 3,$ for all $n \geq 0.$ Combining \eqref{eq901} and \eqref{eq902} and comparing the coefficient of $q^{3n+1}$, we obtain
		\begin{equation}\label{eq903}
			B_{5}(n) \equiv b(3n+1) \pmod 5.
		\end{equation}
		Note that $ \eta^8(3z)$ is a Hecke eigenform (see \cite{Martin1996}). Thus, we have
		$$\eta^8(3z)|T_p= \sum_{n=1}^{\infty} \left(b(pn)+ p^3 \cdot  \left(\frac{3^8}{p}\right) b\left(\frac{n}{p}\right)\right) q^n = \lambda(p)\sum_{n=1}^{\infty} b(n)q^n.$$ See that $ \left(\frac{3^8}{p}\right) = 1. $ Comparing the coefficients of $q^n$ on both sides of the above equation, we get
		\begin{equation}\label{eq904}
			b(pn)+ p^3 \cdot b\left(\frac{n}{p}\right) = \lambda(p) b(n).
		\end{equation}
		Note that $b(1)=1$ and $b(\frac{1}{p})=0.$ Thus, if we put $n=1$ in \eqref{eq904}, we get $b(p)=\lambda(p).$ Observe that $b(p)=0$ for all $p \not \equiv 1 \pmod 3$ this implies that $\lambda(p)=0$ for all $p \not \equiv 1 \pmod 3.$ From \eqref{eq904} we get that for all $p \not \equiv 1 \pmod 3$, we have 
		\begin{equation}\label{eq905}
			b(pn)+ p^3 \cdot b\left(\frac{n}{p}\right) =0.
		\end{equation}
		Now, we consider two cases here. If $p \not| n,$ then 
		replacing $n$ by $pn+r$ with $\gcd(r,p)=1$ in \eqref{eq905}, we get
		\begin{equation}\label{eq906}
			b(p^2n+ pr)=0 .
		\end{equation}
			Further, replacing $n$ by $3n-pr+1$ in \eqref{eq906} and using \eqref{eq903}, we get 
		\begin{equation}\label{eq907}
			B_{5}\left(p^2n +pr \frac{(1- p^2)}{3} +  \frac{p^2-1}{3}\right) \equiv 0 \pmod5.
		\end{equation}
		Since $p \equiv 2 \pmod 3,$ we have $3| (1-p^2)$ and  $\gcd\left( \frac{(1- p^2)}{3} , p\right)=1.$ When $r$ runs over a residue system excluding the multiples of $p$, so does $ r \frac{(1- p^2)}{3}.$
		Thus for $p \nmid j,$ \eqref{eq907} can be written as
		\begin{equation}\label{eq908}
			B_{5}\left(p^2n+pj+ \frac{p^2-1}{3}\right) \equiv 0 \pmod5.
		\end{equation}
	Now, we consider the second case, when $p |n.$ Here replacing $n$ by $pn$ in \eqref{eq905}, we get
		\begin{equation}\label{eq909}
			b(p^2n) =  -p^3 \cdot b\left(n\right).
		\end{equation}
		Further, substituting $n$ by $3n+1$ in \eqref{eq909} we obtain
		\begin{equation}\label{eq910}
			b(3p^2n+ p^2) = -p^3 \cdot  b\left(3n+1\right).
		\end{equation}
		Combining \eqref{eq903} and \eqref{eq910}, we get
		\begin{equation}\label{eq911}
			B_{5}\left(p^2n+ \frac{p^2-1}{3} \right) \equiv - p^3 \cdot B_{5} \left(n\right) \pmod5.
		\end{equation}
		Let $p_i$ be primes such that $p_i \equiv 2 \pmod 3.$ Further notice that
		$$ p_1^2 p_2^2 \cdots p_k^2 n + \frac{p_1^2 p_2^2 \cdots p_k^2-1}{3}= p_1^2 \left(p_2^2 \cdots p_k^2 n + \frac{ p_2^2 \cdots p_k^2 -1}{3} \right)+ \frac{p_1^2-1}{3} .$$
		Repeatedly using \eqref{eq911} and \eqref{eq908}, we get
		\begin{align*}
			&	B_{5} \left( p_1^2 p_2^2 \cdots p_k^2 p_{k+1}^2 n + \frac{p_1^2 p_2^2 \cdots p_k^2 p_{k+1}\left(3j+ p_{k+1}\right)-1}{3} \right) \\
			& \equiv (- p_1^3) \cdot  B_{5} \left(p_2^2 \cdots p_k^2 p_{k+1}^2 n + \frac{ p_2^2 \cdots p_k^2 p_{k+1}\left(3j+ p_{k+1}\right) -1}{3} \right) \equiv \cdots\\
			& \equiv (- 1)^k (p_1p_2\ldots p_k)^3 \cdot B_{5} \left(p_{k+1}^2 n + p_{k+1} j + \frac{p^2_{k+1}-1}{3} \right) \equiv 0 \pmod5
		\end{align*}
		when $j \not \equiv 0 \pmod{ p_{k+1}}.$ Thus, the theorem is proved.
	\end{proof}
	
		\begin{proof}[Proof of Theorem \ref{thm11}] From \eqref{eq905}, we get that for any prime $p \equiv 2 \pmod 3$
		\begin{equation}\label{eq912}
			b(pn)= -p^3 \cdot b\left(\frac{n}{p}\right).
		\end{equation}  Replacing $n$ by $3n+2,$ we obtain
		\begin{equation}\label{eq913}
			b(3pn+2p)= - p^3 \cdot b\left(\frac{3n+2}{p}\right).
		\end{equation}
		Further replacing $n$ by $p^kn+r$ with $p \nmid r$ in \eqref{eq913}, we obtain
		\begin{equation}\label{eq914}
			b\left(3 \left(p^{k+1}n+pr+ \frac{2p-1}{3} \right)+1\right)= (-p^3) \cdot  b\left(3\left(p^{k-1}n+ \frac{3r+2-p}{3p}\right)+1\right).
		\end{equation}
		Note that $\frac{2p-1}{3} $ and $\frac{3r+2-p}{3p}$ are integers. Combining \eqref{eq914} and \eqref{eq903}, we get
		\begin{equation}\label{eq915}
			B_{5}\left(p^{k+1}n+pr+ \frac{2p-1}{3}\right)\equiv  (-p^3)  B_{5}\left(p^{k-1}n+ \frac{3r+2-p}{3p} \right) \pmod5.
		\end{equation}
	\end{proof}
	
	\begin{proof}[Proof of Corollary \ref{coro6}]Let $p$ be a prime such that $p \equiv 2 \pmod 3.$ Choose a non negative integer $r$ such that $3r+2=p^{2k-1}.$ Replacing $ k$ by $2k-1$ in \eqref{eq915}, we obtain
		\begin{align*}
			B_{5}\left(p^{2k}n+ \frac{p^{2k}-1}{3} \right)&\equiv  (-p^3)  B_{5}\left(p^{2k-2}n+ \frac{p^{2k-2}-1}{3} \right)\\
			& \equiv \cdots \equiv \left(-p^3\right)^k  B_{5}(n) \pmod5.
		\end{align*}
	\end{proof}

\begin{proof}[Proof of Theorem \ref{Newmann2}]
Observe that
\begin{equation}\label{eq1800b*}
	\sum_{n=0}^{\infty} B_{5}(n)q^{n} \equiv  \frac{(q^5;q^5)_{\infty} (q^{5};q^{5})_{\infty}}{(q;q)^2_{\infty}} \equiv (q;q)^3_{\infty} (q^{5};q^{5})_{\infty} \pmod 5 .
\end{equation}
Define 
\begin{equation}\label{eq150}
	\sum_{n=0}^{\infty} t(n) q^{n}:= f_1^3f_5= (q;q)^3_{\infty} (q^{5};q^{5})_{\infty}.
\end{equation}
Using  a result of Newman [\cite{Newmann1959},Theorem $3$] we get that if $p$ is a prime with $p \equiv 1 \pmod 6,$ then
\begin{equation}\label{eq151}
	t\left(pn+ \frac{p-1}{3}\right) = t\left(\frac{p-1}{3}\right)t(n)- \left( \frac{5}{p} \right) \cdot p \cdot  t\left(\frac{n}{p}- \frac{p-1}{3p}\right) .
\end{equation}

Notice that, if $p \nmid (3n+1), $ then $\left(\frac{n- \frac{p-1}{3}}{p} \right)$ is not an integer and hence
\begin{equation}\label{eq152}
	t\left(\frac{n}{p}- \frac{p-1}{3p}\right) =0 .
\end{equation}
Combining \eqref{eq151} and \eqref{eq152}, we see that if $p \nmid (3n+1), $ then
\begin{equation}\label{eq153}
	t\left(pn+ \frac{p-1}{3}\right) = t\left(\frac{p-1}{3}\right)t(n) .
\end{equation}
Thus, if $p \nmid (3n+1) $ and $t\left(\frac{p-1}{3}\right) \equiv 0 \pmod 5,$ then for $n \geq 0,$
\begin{equation}\label{eq154}
	t\left(pn+ \frac{p-1}{3}\right) \equiv 0 \pmod 5 .
\end{equation}
Replacing $n$ by $ pn+ \frac{p-1}{3}$ in \eqref{eq151}, we get

\begin{equation}\label{eq155}
	t\left(p^2n+ \frac{p^2-1}{3}\right) = t\left(\frac{p-1}{3}\right)t\left(pn+ \frac{p-1}{3}\right)- \left( \frac{5}{p} \right) \cdot p \cdot t\left(n\right) .
\end{equation}
Notice that from \eqref{eq155} if $t\left(\frac{p-1}{3}\right) \equiv 0 \pmod 3, $ then for $n \geq 0,$ we get
\begin{equation}\label{eq156}
	t\left(p^2n+ \frac{p^2-1}{3}\right) \equiv - \left( \frac{5}{p} \right) \cdot p \cdot t\left(n\right) \pmod 5 .
\end{equation}
From \eqref{eq156} and by using mathematical induction, we get that if $t\left(\frac{p-1}{3}\right) \equiv 0 \pmod 5, $ then for $n \geq 0, $ and $k \geq 0, $ we get 
\begin{equation}\label{eq157}
	t\left(p^{2 k}n+ \frac{p^{2k}-1}{3}\right) \equiv \left( -  \left( \frac{5}{p}  \right)\cdot p \right)^{k} t\left(n\right) \pmod 5 .
\end{equation}
Replacing $n$ by $pn+ \frac{p-1}{3}$ in \eqref{eq157} and using \eqref{eq154}, we obtain that if $p \nmid (3n+1)$ and $t\left(\frac{p-1}{3}\right) \equiv 0 \pmod 3,$ then for $n \geq 0$ and $k \geq 0,$ we get
\begin{equation}\label{eq158}
	t\left(p^{2 k +1}n+ \frac{p^{2k+1}-1}{3}\right) \equiv 0 \pmod 5 .
\end{equation}
From \eqref{eq150} and \eqref{eq1800b*}, we obtain that for $n \geq 0,$
\begin{equation}\label{eq159}
	B_{5}(n) \equiv t(n) \pmod 5 .
\end{equation}
Thus, combining \eqref{eq159} and \eqref{eq158}, we get the result of Theorem \ref{Newmann2}.
\end{proof}

\section{Proof of Theorems \ref{mainthmRaduSellers} [Radu Seller's Method]}
\subsection{An algorithmic approach by Radu and Sellers}
We begin with recalling an algorithm developed by Radu and Sellers \cite{Radu2011} that will be used to prove Theorem \ref{mainthmRaduSellers}. Let $M$ be a positive integer and let $R(M)$ denote the set of integers sequences $r=(r_\delta)_{\delta|M}$ indexed by the positive divisors of $M$. For $r \in R(M)$ and the positive divisors $1=\delta_1<\delta_2<\cdots<\delta_{i_M}=M$ of $M$,  we set $r=(r_{\delta_1},r_{\delta_2},\dots , r_{\delta_{i_M}})$. We define $c_r(n)$  by
\begin{align*}
	\sum\limits_{n=0}^{\infty} c_r(n)q^n:=\prod\limits_{\delta|M}(q^{\delta};q^{\delta})_{\infty}^{r_{\delta}}=\prod\limits_{\delta|M}\prod\limits_{n=1}^{\infty}(1-q^{n\delta})^{r_{\delta}}.
\end{align*}
Radu and Sellers \cite{Radu2011} approach to prove congruences for $c_r(n)$ modulo a positive integer reduced the number of cases that we need to check as compared with the classical method which uses Sturm's bound alone.

Let $m\geq0$ and $s$ be integers. We denote by $[s]_m$ the residue class of $s$ in $\mathbb{Z}_m$ and we denote by $\mathbb{S}_m$ the set of squares in $\mathbb{Z}_m^{*}$. For $t \in \{0,1, \dots, m-1\}$ and $r\in R(M)$, the subset $P_{m,r}(t)\subseteq \{0,1, \dots, m-1\}$ is defined as
\begin{align*}
	P_{m,r}(t):=\left\{t^{'}: \exists [s]_{24m} \textrm{ such that } t^{'}\equiv ts+\frac{s-1}{24} \sum\limits_{\delta|M} \delta r_{\delta} \pmod m\right\}.
\end{align*}

\begin{definition}
	For positive integers $m$, $M$ and $N$,  let $r=(r_\delta)\in R(M)$ and $t \in \{0,1, \dots, m-1\}$. Let $\kappa=\kappa(m):=\gcd(m^2-1,24)$ and write
	\begin{align*}
		\prod\limits_{\delta|M} \delta^{|r_{\delta}|}=2^s\cdot j,
	\end{align*}
	where $s$ and $j$ are non-negative integers with $j$ odd. The set $\Delta^{*}$ is the collection of all tuples $(m, M, N, (r_{\delta}), t)$ satisfying the following conditions.
	\begin{itemize}
		\item[(a)] Every prime divisor of $m$ is also a divisor of $N$.
		\item[(b)] If $\delta|M,$ then $\delta|mN$ for every $\delta\geqslant 1$ such that $r_{\delta} \ne 0$.
		\item[(c)] $\kappa N \sum\limits_{\delta|M} r_{\delta}mN/\delta \equiv 0 \pmod {24}$.
		\item[(d)] $\kappa N \sum\limits_{\delta|M} r_{\delta} \equiv 0 \pmod {8}$.
		\item[(e)] $\frac{24m}{\gcd(-24\kappa t-\kappa\sum\limits_{\delta|M} \delta r_{\delta}, 24m)}$divides $N$.
		\item[(f)] If $2|m$, then either ($4|\kappa N$ and $8|sN$) or ($2|s$ and $8|(1-j)N$).
	\end{itemize}
\end{definition}

For positive integers $m$, $M$ and $N$, $\gamma= \begin{bmatrix}
	a &b\\ c&d	\end{bmatrix} \in \Gamma$, $r\in R(M)$ and $a = (a_{\delta})\in R(N)$, we define
\begin{align*}
	p_{m,r}(\gamma):= \min \limits_{\lambda\in\{0,1,\dots,m-1\}} \frac{1}{24} \sum \limits_{\delta|M} r_{\delta}\frac{\gcd^2(\delta a + \delta \kappa \lambda c,mc)}{\delta m}
\end{align*}
and
\begin{align*}
	p^{*}_{a}(\gamma):=\frac{1}{24} \sum \limits_{\delta|N} a_{\delta}\frac{\gcd^2(\delta,c)}{\delta}.
\end{align*}

The following lemma is given by Radu \cite[Lemma $4.5$]{radu2009}.
\begin{lemma}\label{finite check}
	Let $u$ be a positive integer, $(m, M, N, (r_{\delta}), t)\in \Delta^{*}$ and $a=(a_{\delta})\in R(N)$. Let $\{\gamma_1, \gamma_2, \dots, \gamma_n\} \subseteq \Gamma$ denote a complete set of representatives of the double cosets of $\Gamma_0(N)\backslash \Gamma / \Gamma_{\infty}$. Assume that $p_{m,r}(\gamma_i) +p^{*}_{a}(\gamma_i)\geq 0$ for all $1 \leq i \leq n$. Let $t_{\min}= \min_{t^{'} \in P_{m,r}(t)} t^{'}$ and
	\begin{align*}
		\nu:= \frac{1}{24} \left\{\left(\sum \limits_{\delta|M} r_{\delta}+\sum \limits_{\delta|N} a_{\delta}\right) [\Gamma: \Gamma_0(N)]-\sum \limits_{\delta|N} \delta a_{\delta} \right\} - \frac{1}{24m}\sum \limits_{\delta|M} \delta r_{\delta} -\frac{t_{\min}}{m}.
	\end{align*}
	If the congruence $c_r(mn+t^{'}) \equiv 0 \pmod u$ holds for all $t^{'} \in P_{m,r}(t)$ and $ 0\leq n \leq \lfloor \nu \rfloor$, then $c_r(mn+t^{'}) \equiv 0 \pmod u$ holds for all $t^{'} \in P_{m,r}(t)$ and $n \geq 0$.
\end{lemma}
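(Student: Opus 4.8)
The plan is to deduce the statement from a single application of Sturm's bound to a holomorphic modular form attached to the sieved generating series, so that the vanishing of finitely many coefficients forces the congruence for all $n$. First I would build the candidate form. Beginning from $\prod_{\delta|M}\eta(\delta z)^{r_\delta}=\sum_{n\ge0}c_r(n)\,q^{\,n+\frac{1}{24}\sum_{\delta|M}\delta r_\delta}$, I would sieve onto the residue classes collected in $P_{m,r}(t)$ by averaging over $m$-th roots of unity, rescale so that the uniformizing variable is again $q$, and multiply by the auxiliary eta-quotient $\prod_{\delta|N}\eta(\delta z)^{a_\delta}$. Call the result $g(z)$; by construction its $q$-expansion has the shape $\sum_{n\ge0}\big(\sum_{t'\in P_{m,r}(t)}c_r(mn+t')\big)\,q^{\,\mathrm{ord}_\infty(g)+n}$, where $\mathrm{ord}_\infty(g)=\frac{1}{24}\sum_{\delta|N}\delta a_\delta+\frac{1}{24m}\sum_{\delta|M}\delta r_\delta+\frac{t_{\min}}{m}$ is exactly the quantity subtracted from the Sturm bound in the definition of $\nu$.

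The heart of the proof is showing $g\in M_k(\Gamma_0(N),\chi)$ with $k=\tfrac12\big(\sum_{\delta|M}r_\delta+\sum_{\delta|N}a_\delta\big)$, which I would split into three pieces. (i) \emph{Transformation under $\Gamma_0(N)$}: conditions $(c)$ and $(d)$ of $\Delta^{*}$ are precisely the mod-$24$ and mod-$8$ congruences of Theorem \ref{thm2.3} (with the factors $\kappa$ and $N$ absorbed), which make the eta-quotient parts transform with a Nebentypus character, while $(a)$ and $(b)$ provide the divisibilities that keep the levels compatible. (ii) \emph{Stability of the sieve}: the definition of $P_{m,r}(t)$ as the set of $t'\equiv ts+\frac{s-1}{24}\sum_{\delta|M}\delta r_\delta\pmod m$ arising from residues $[s]_{24m}$ (whose relevant values come from the squares $\mathbb{S}_{24m}$) is engineered so that a matrix of $\Gamma_0(N)$ permutes the sieved progressions among themselves; summing $c_r(mn+t')$ over the entire orbit $P_{m,r}(t)$ is therefore invariant, which is exactly why the lemma treats the whole orbit rather than a single class, with $(f)$ handling the $2$-adic subtleties when $2\mid m$. (iii) \emph{Holomorphy at the cusps}: by Theorem \ref{thm2.4} the order of $\prod_{\delta|N}\eta(\delta z)^{a_\delta}$ at a cusp represented by $\gamma$ is $p^{*}_{a}(\gamma)$, while $p_{m,r}(\gamma)$ is a lower bound for the order of the sieved factor — the minimum over $\lambda\in\{0,\dots,m-1\}$ records that the sieve produces $m$ translates and that the cusp order of their average is the least among them, which is the source of the $\gcd(\delta a+\delta\kappa\lambda c,mc)$ term, and $(e)$ forces $N$ to be a multiple of the level at which these orders are well defined. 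Thus the hypothesis $p_{m,r}(\gamma_i)+p^{*}_{a}(\gamma_i)\ge0$ over a full set of double-coset representatives of $\Gamma_0(N)\backslash\Gamma/\Gamma_\infty$ says exactly that $g$ is holomorphic at every cusp, so $g$ is a holomorphic modular form.

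With modularity established, the rest is bookkeeping. By Sturm's theorem (see \cite{Ono2004}), a holomorphic form of weight $k$ on $\Gamma_0(N)$ with integral $q$-expansion whose coefficients all vanish modulo $u$ up to index $B:=\big\lfloor\frac{k}{12}[\Gamma:\Gamma_0(N)]\big\rfloor$ vanishes modulo $u$ identically. Since $\frac{1}{24}\big(\sum_{\delta|M}r_\delta+\sum_{\delta|N}a_\delta\big)[\Gamma:\Gamma_0(N)]=\frac{k}{12}[\Gamma:\Gamma_0(N)]=B$, and since $c_r(mn+t')$ occurs in the coefficient of $q^{\,\mathrm{ord}_\infty(g)+n}$ of $g$, the condition that such a coefficient lie at or below the Sturm bound reads $\mathrm{ord}_\infty(g)+n\le B$, that is $n\le B-\mathrm{ord}_\infty(g)=\nu$. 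Hence verifying $c_r(mn+t')\equiv0\pmod u$ for all $t'\in P_{m,r}(t)$ and $0\le n\le\lfloor\nu\rfloor$ makes every coefficient of $g$ up to the Sturm bound vanish modulo $u$, forcing $g\equiv0\pmod u$ and therefore $c_r(mn+t')\equiv0\pmod u$ for all $n\ge0$.

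The main obstacle I anticipate is the combination of 2(ii) and 2(iii): rigorously proving that the sieved sum over $P_{m,r}(t)$ is $\Gamma_0(N)$-invariant, and converting the cusp-order formula of Theorem \ref{thm2.4} into the explicit expression $p_{m,r}(\gamma)$ with its minimum over $\lambda$. Both require tracking precisely how $\Gamma_0(N)$ acts on the fractional $q$-powers introduced by $\frac{1}{24}\sum_{\delta|M}\delta r_\delta$ and by the sieve, and checking that the $\Delta^{*}$ conditions are exactly the integrality hypotheses needed for that action to close up. This careful accounting — not any single deep idea — is where the work concentrates, and it is exactly the general machinery supplied by Radu in \cite{radu2009}.
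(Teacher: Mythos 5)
The paper never proves this lemma --- it is imported verbatim from Radu \cite[Lemma 4.5]{radu2009} --- so the only fair comparison is with Radu's original argument, and your sketch reconstructs that argument's route faithfully: sieve $f_r(z)=\prod_{\delta|M}\eta(\delta z)^{r_\delta}$ onto arithmetic progressions via the $m$ translates $z\mapsto (z+\kappa\lambda)/m$ (which is exactly where the terms $\gcd(\delta a+\delta\kappa\lambda c,\,mc)$ and the minimum over $\lambda$ in $p_{m,r}(\gamma)$ come from), observe that the weight-$k$ action of $\Gamma_0(N)$ permutes the sieved pieces within the orbit indexed by $P_{m,r}(t)$ (this is where the $\Delta^{*}$ conditions and the squares $[s]_{24m}$ do their work), multiply by the auxiliary quotient $\prod_{\delta|N}\eta(\delta z)^{a_\delta}$, verify cusp holomorphy from $p_{m,r}(\gamma_i)+p^{*}_{a}(\gamma_i)\geq 0$ over the double-coset representatives, and finish with a Sturm-type bound. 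Your bookkeeping even recovers $\nu$ exactly as (Sturm bound) minus (order at infinity), which is the right consistency check that this is the intended mechanism.

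There is, however, one genuine error, and it sits at the load-bearing final step: your claimed expansion $g(z)=\sum_{n\geq 0}\bigl(\sum_{t'\in P_{m,r}(t)}c_r(mn+t')\bigr)q^{\mathrm{ord}_\infty(g)+n}$ is wrong, and if it were right your proof would not prove the lemma. Distinct $t'\in P_{m,r}(t)$ are distinct residues mod $m$, so after rescaling their terms sit at exponents whose fractional parts $\frac{24t'+\sum_{\delta|M}\delta r_\delta}{24m}\bmod 1$ are pairwise distinct; the terms never collapse onto a common power of $q$, so each coefficient of the sieved object is a single $c_r(mn+t')$, not the sum over the orbit. This matters for the logic, not just the notation: had the coefficients been the sums you wrote, Sturm would only yield $\sum_{t'\in P_{m,r}(t)}c_r(mn+t')\equiv 0\pmod u$ for all $n$, which is strictly weaker than the lemma's conclusion that each individual $c_r(mn+t')$ vanishes mod $u$. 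The repair is exactly Radu's: either track the individual components $g_{m,t'}$, whose set is permuted by the action so that a Sturm bound in the uniformizer $q^{1/24m}$ kills each coefficient separately, or note the distinct fractional exponents in the orbit-sum and read off individual vanishing. Two smaller points a full write-up must address: Sturm's theorem has to be invoked in a version valid for expansions in fractional powers of $q$ (clear denominators or cite Radu's variant), and for composite modulus $u$ rather than a prime; neither is fatal, but neither is automatic from the statement of Sturm you quote.
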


The next lemma is given by Wang \cite[Lemma $4.3$]{wang2017}. This result gives the complete set of representatives of the double cosets in  $\Gamma_0(N)\backslash \Gamma / \Gamma_{\infty}$ when $N$ or $\frac{N}{2}$ is a square-free integer.

\begin{lemma}\label{square-free}
	If $N$ or $\frac{N}{2}$ is a square-free integer, then
	\begin{align*}
		\bigcup_{\delta|N} \Gamma_0(N) \begin{bmatrix}
			1 &0\\ \delta &1	\end{bmatrix} \Gamma_{\infty} =\Gamma.
	\end{align*}
\end{lemma}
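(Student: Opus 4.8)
The plan is to translate this matrix identity into a statement about cusps and then reduce it to an elementary divisibility computation governed by the hypothesis. First I would recall that $\mathrm{SL}_2(\Z)$ acts transitively on $\mathbb{P}^1(\Q)=\Q\cup\{\infty\}$ and that the stabilizer of $\infty$ is $\pm\Gamma_\infty$. Since $-I\in\Gamma_0(N)$, the double cosets in $\Gamma_0(N)\backslash\Gamma/\Gamma_\infty$ are in natural bijection with the $\Gamma_0(N)$-orbits on $\mathbb{P}^1(\Q)$ (the cusps of $\Gamma_0(N)$) via $\gamma\mapsto\gamma\infty$. Because $\begin{bmatrix}1&0\\\delta&1\end{bmatrix}\infty=\tfrac1\delta$, the asserted equality $\bigcup_{\delta\mid N}\Gamma_0(N)\begin{bmatrix}1&0\\\delta&1\end{bmatrix}\Gamma_\infty=\Gamma$ is equivalent to the assertion that $\{1/\delta:\delta\mid N\}$ is a complete set of representatives for the cusps of $\Gamma_0(N)$; so it suffices to prove this.

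Next I would isolate an invariant separating these candidate cusps. For a cusp $a/c$ written in lowest terms, left multiplication by $\begin{bmatrix}\alpha&\beta\\N\gamma&\delta_0\end{bmatrix}\in\Gamma_0(N)$ replaces the lower entry $c$ by $N\gamma a+\delta_0 c\equiv \delta_0 c\pmod N$, where $\gcd(\delta_0,N)=1$; since multiplication by the unit $\delta_0$ is a bijection of $\Z/N$, the quantity $\gcd(c,N)$ is constant on each $\Gamma_0(N)$-orbit. For $\delta\mid N$ the cusp $1/\delta$ has invariant $\gcd(\delta,N)=\delta$, so the cusps $1/\delta$ attached to distinct divisors are pairwise $\Gamma_0(N)$-inequivalent. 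This already produces $\tau(N)$ pairwise-inequivalent cusps, where $\tau(N)$ denotes the number of positive divisors of $N$.

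To finish I would compare this with the known count of all cusps, namely $\sum_{d\mid N}\phi\!\left(\gcd(d,N/d)\right)$. The crux of the argument --- and the step I expect to be the main obstacle --- is the purely arithmetic claim that under the hypothesis that $N$ or $N/2$ is squarefree one has $\gcd(d,N/d)\in\{1,2\}$ for every $d\mid N$, whence $\phi(\gcd(d,N/d))=1$ and the total cusp count collapses to exactly $\tau(N)$. When $N$ is squarefree this is immediate, since $d$ and $N/d$ are then coprime. When $N/2$ is squarefree I would write $N=2m$ with $m$ squarefree: any odd prime divides $N$ at most once and hence cannot divide both $d$ and $N/d$, while for the prime $2$ we have $v_2(d)+v_2(N/d)=v_2(N)\le 2$, forcing the $2$-part of $\gcd(d,N/d)$ to be $1$ or $2$. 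Granting this, we have exhibited $\tau(N)$ inequivalent cusps $1/\delta$ and there are only $\tau(N)$ cusps in all, so $\{1/\delta:\delta\mid N\}$ is complete; by the first paragraph this is exactly the claimed decomposition of $\Gamma$.

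If one prefers to avoid quoting the cusp count, the same conclusion can be reached directly: given any primitive column $\binom{a}{c}$ one sets $\delta=\gcd(c,N)$ and reduces the lower entry to $\delta$ by a constrained Euclidean algorithm using the generators $\begin{bmatrix}1&k\\0&1\end{bmatrix},\begin{bmatrix}1&0\\Nk&1\end{bmatrix}\in\Gamma_0(N)$ (note $\gcd(c,Na)=\gcd(c,N)$ because $\gcd(a,c)=1$), and then adjusts the numerator to $1$; the numerator adjustment is solvable precisely because $\phi(\gcd(\delta,N/\delta))=1$, which is again exactly where the squarefree hypothesis enters. Either way, the whole proof funnels through the bound $\gcd(d,N/d)\le 2$.
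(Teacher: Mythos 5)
Your proof is correct, but note that the paper itself contains no proof of this lemma: it is imported verbatim from Wang \cite{wang2017} (Lemma~4.3 there, with the squarefree case going back to Radu and Sellers \cite{radu2011a}), so there is no in-paper argument to compare against line by line. Judged on its own merits, your route is the standard one in this circle of ideas and is complete: the identification of $\Gamma_0(N)\backslash\Gamma/\Gamma_{\infty}$ with the cusps of $\Gamma_0(N)$ via $\gamma\mapsto\gamma\infty$ is legitimate precisely because $-I\in\Gamma_0(N)$ (which you correctly flag); the invariance of $\gcd(c,N)$ along $\Gamma_0(N)$-orbits is sound, since $\alpha\delta_0-\beta N\gamma=1$ forces $\delta_0$ to be a unit modulo $N$; the cusps $1/\delta$ then realize the $\tau(N)$ distinct values of the invariant; and the classical count $\sum_{d\mid N}\phi\left(\gcd(d,N/d)\right)$ collapses to $\tau(N)$ exactly under the hypothesis, since $N$ squarefree gives $\gcd(d,N/d)=1$ while $N/2$ squarefree gives $v_2(N)\le 2$ and $v_p(N)\le 1$ for odd $p$, so $\gcd(d,N/d)\in\{1,2\}$ and $\phi(\gcd(d,N/d))=1$ in all cases. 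This isolation of the bound $\gcd(d,N/d)\le 2$ is exactly where the squarefree hypothesis enters in the cited literature as well, so your argument is a faithful, self-contained reconstruction rather than a genuinely divergent one. The only soft spot is in your optional second route: the assertion that the numerator of $a/\delta$ can be adjusted to $1$ when $\phi(\gcd(\delta,N/\delta))=1$ silently invokes the equivalence criterion for cusps with equal denominator ($a_1/\delta\sim a_2/\delta$ under $\Gamma_0(N)$ if and only if $a_1\equiv a_2$ modulo $\gcd(\delta,N/\delta)$, up to a unit), which you should either prove or cite; since your main counting argument does not rely on this sketch, the proof as a whole stands.
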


\subsection{Proof of Theorem \ref{mainthmRaduSellers}}
For an integer $m$, a prime $p\geqslant 5$ and $t\in \{0,1,\dots,m-1\}$, we recall that
\begin{align*}
	\kappa:=&\kappa(m)=\gcd(m^2-1,24),\\
	\hat {p}:=&\frac{p^2-1}{24},\\
	A_t:=&A_t(m,p)=\frac{12m}{\gcd(-\kappa(12 t+p-1),12m)}
	\\
	\epsilon_p:=&\epsilon_p(m,p)=\begin{cases}
		1 & \textrm{ if } p\nmid m,\\
		0 & \textrm{ if } p| m.
	\end{cases}
\end{align*}

We now prove the following three results specific to the proof of Theorem \ref{mainthmRaduSellers}.

\begin{lemma} \label{lemdelta}
	Let $p\geq 3$ be a prime number. For a positive integer $g$, let $e_1, \ldots, e_g$ be positive integers and let $p_1,p_2, \ldots, p_{g}$ be prime numbers. Let 
	$$(m,M,N,r,t)=(p_1^{e_1} p_2^{e_2} \cdots p_g^{e_g}, p,  p^{\epsilon_p} p_1 p_2 \cdots p_{g},r=(r_1=-2,r_p=2),t)$$ 
	where $t \in \{0,1,\ldots, m-1\}$ is such that $A_t|N$. Then $(m,M,N,r,t) \in \Delta^{*}$.
\end{lemma}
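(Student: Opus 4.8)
The plan is to verify directly that the tuple
$$(m,M,N,r,t)=\left(p_1^{e_1}\cdots p_g^{e_g},\,p,\,p^{\epsilon_p}p_1\cdots p_g,\,(r_1=-2,\,r_p=2),\,t\right)$$
satisfies each of the six conditions (a)--(f) in the definition of $\Delta^{*}$. Since this is a checklist, the proof is a sequence of arithmetic verifications rather than a single clever argument; the work is in handling the cases cleanly. First I would record the basic data: $M=p$ has divisors $1$ and $p$, so $r=(r_1,r_p)=(-2,2)$ means $\sum_{\delta\mid M}r_\delta=0$ and $\sum_{\delta\mid M}\delta r_\delta = -2+2p=2(p-1)$. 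Also $\prod_{\delta\mid M}\delta^{|r_\delta|}=p^2$, so in the notation of the definition $s=0$ and $j=p^2$ (odd), which is relevant for condition (f).

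Next I would dispatch the easy conditions. Condition (a) holds because every prime divisor of $m=p_1^{e_1}\cdots p_g^{e_g}$ is one of the $p_i$, each of which divides $N=p^{\epsilon_p}p_1\cdots p_g$. Condition (b) requires that whenever $r_\delta\neq0$ we have $\delta\mid mN$; here the only nonzero entries are $\delta=1$ (trivial) and $\delta=p$, and $p\mid mN$ since $p\mid N$ (when $\epsilon_p=1$, i.e.\ $p\nmid m$) or $p\mid m$ (when $\epsilon_p=0$), so in either case $p\mid mN$. Conditions (c) and (d) use $\sum_{\delta\mid M}r_\delta=0$: condition (d) reads $\kappa N\sum r_\delta\equiv0\pmod 8$, which is immediate, and for (c) the sum $\kappa N\sum_{\delta\mid M}r_\delta\,mN/\delta = \kappa N\,mN(r_1/1 + r_p/p)=\kappa N\,mN(-2+2/p)=-2\kappa mN^2(p-1)/p$; since $p\mid mN$ this is an integer, and I would check it is divisible by $24$ by noting $\kappa=\gcd(m^2-1,24)$ together with the factor $2(p-1)$ and the structure of $mN^2/p$ supplies the needed factors of $8$ and $3$.

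The genuinely substantive conditions are (e) and (f). For condition (e) the quantity to control is
$$\frac{24m}{\gcd\!\left(-24\kappa t-\kappa\sum_{\delta\mid M}\delta r_\delta,\,24m\right)}=\frac{24m}{\gcd(-24\kappa t-2\kappa(p-1),\,24m)}=\frac{12m}{\gcd(-\kappa(12t+p-1),\,12m)}=A_t,$$
where I pull a factor of $2$ out of numerator and the gcd argument; this is exactly the quantity $A_t$ defined in the statement, so condition (e) becomes precisely the hypothesis $A_t\mid N$ that we are given. This identification is the conceptual heart of the lemma and explains why $A_t$ was defined as it was. For condition (f), which only applies when $2\mid m$, I observe that all the $p_i\geq 5$ are odd, so $m=p_1^{e_1}\cdots p_g^{e_g}$ is odd and the hypothesis $2\mid m$ never occurs; hence (f) is vacuous.

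The main obstacle I anticipate is the divisibility-by-$24$ bookkeeping in condition (c): one must argue carefully that $-2\kappa mN^2(p-1)/p$ is divisible by $24$, splitting into the cases $\epsilon_p=1$ and $\epsilon_p=0$ and using that for a prime $p\geq 5$ one has $24\mid(p^2-1)$, together with the definition $\kappa=\gcd(m^2-1,24)$ to track the power of $2$ and the factor of $3$. Everything else is either immediate or reduces to the clean algebraic identity exhibited above for (e), so once the $24$-divisibility is secured the lemma follows.
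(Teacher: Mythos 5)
Your proposal follows the same checklist route as the paper, and conditions (a), (b), (d) and the identification of condition (e) with the hypothesis $A_t\mid N$ are handled correctly (the paper makes exactly the same observation for (e), only less explicitly). But two of the six conditions are not actually established. For condition (c), you leave the divisibility of $-2\kappa m N^2(p-1)/p$ by $24$ as an ``anticipated obstacle,'' and the tool you propose---$24\mid p^2-1$ for $p\geq 5$---cannot close the gap: the lemma permits $p=3$, where $24\nmid p^2-1$, and in any case the expression contains the factor $(p-1)$ but no factor $(p+1)$, so $p^2-1$ never enters. The missing ingredient is a statement about $\kappa m$, not about $p$: writing the quantity as $2(1-p)\cdot\kappa m\cdot N\cdot(N/p)$ (note $p\mid N$ in both cases $\epsilon_p=0$ and $\epsilon_p=1$), oddness of $p$ gives $4\mid 2(1-p)$, and a four-case computation of $\kappa=\gcd(m^2-1,24)$ according to $\gcd(m,6)$ (namely $\kappa=24,8,3,1$ when $\gcd(m,6)=1,3,2,6$ respectively) shows $6\mid\kappa m$ in every case, whence $24$ divides the product. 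This is the paper's argument; your remark about using $\kappa$ ``to track the power of $2$ and the factor of $3$'' points in this direction, but the key fact $6\mid \kappa m$ is never formulated, let alone proved.

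For condition (f), your claim that it is vacuous rests on ``all the $p_i\geq 5$,'' which is not a hypothesis of this lemma: here the $p_i$ are arbitrary primes, so $m$ can be even (some $p_i$ may equal $2$), and (f) then has real content. (The restriction $p_i\geq 5$ appears only in Theorem \ref{mainthmRaduSellers}; the lemma is stated---and proved in the paper---without it.) The repair is immediate from data you already wrote down: since $\prod_{\delta\mid M}\delta^{|r_\delta|}=p^2$, one has $s=0$ and $j=p^2$, so $2\mid s$ holds trivially and $8\mid(1-j)N$ holds because $p$ is odd, i.e. $p^2\equiv 1\pmod 8$; hence the second alternative in (f) is satisfied unconditionally, whether or not $2\mid m$. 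That one-line check, rather than an appeal to oddness of $m$, is what the paper does and what your write-up needs.
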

\begin{proof}
	It is immediate that the conditions (a) and (b) in the definition of $\Delta^{*}$ are satisfied. Since $M=p$ and $r=(r_1=-2,r_p=2)$, we see that 
	\begin{align*}
		\kappa N \sum_{\delta|M} r_{\delta}mN/\delta=2m\kappa (1-p) \frac {N^2}{p}
	\end{align*}
 where $\frac {N}{p}$ is an integer. Since $p$ is odd, we have $ 2(1-p)\equiv 0 \pmod {4}$. To prove that (c) holds, it is enough to show that $ \kappa m\equiv 0 \pmod {6}$. Indeed, we note that
	\begin{align*}
		\gcd(m^2-1,8)=\begin{cases}
			8 & \textrm{ if } $m$ \textrm{ is odd,}\\ 
			1 & \textrm{ if } $m$ \textrm{ is even}\\ 
		\end{cases}
		\quad\textrm{and}\quad
		\gcd(m^2-1,3)=\begin{cases}
			3 & \textrm{ if } 3 \nmid m,\\ 
			1 & \textrm{ if } 3|m.\\ 
		\end{cases}
	\end{align*}
	This implies that
	\begin{align*}
		\kappa=\gcd(m^2-1,24)=\begin{cases}
			24& \textrm{ if } \gcd(m,6)=1,\\ 
			8 & \textrm{ if } \gcd(m,6)=3,\\ 
			3 & \textrm{ if } \gcd(m,6)=2,\\ 
			1 & \textrm{ if } \gcd(m,6)=6.\\ 
		\end{cases}
	\end{align*}
	Therefore, it can be easily seen that $\kappa m \equiv 0 \pmod 6$ and that (c) holds. 
	
	 Next, we note that $\sum_{\delta|M} r_{\delta} =0$ and therefore (d) is also satisfied. From the conditions on $t$, we see that (e) is also satisfied. For (f), we see that $\prod_{\delta|M} \delta^{|r_{\delta}|}=p^2$ gives $s=0$ and $j=p^2$. Now $(f)$ follows quickly since $p^2\equiv 1 \pmod 8$. This completes the proof. 
\end{proof}

\begin{lemma} \label{pmrpositive}
	Let $p\geq 3$ be a prime number. For a positive integer $m$, $M=p$, $r=(r_1=-2,r_p=2)$ and $\gamma= \begin{bmatrix}
		a &b\\ c&d	\end{bmatrix} \in \Gamma$, we have
	\begin{align*}
		p_{m,r}(\gamma)&= \min \limits_{\lambda\in\{0,1,\dots,m-1\}} \frac{1}{24} \sum \limits_{\delta|M} r_{\delta}\frac{\gcd^2(\delta a + \delta \kappa \lambda c,mc)}{\delta m}\geqslant	-\frac{(p-1)m}{12p}\\
	\end{align*}
\end{lemma}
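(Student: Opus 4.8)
The plan is to expand the defining sum explicitly and reduce the estimate to a comparison of two gcd's. Since $M=p$, its only divisors are $1$ and $p$, and with $r_1=-2$, $r_p=2$ the inner expression collapses. Writing $x=x_\lambda:=a+\kappa\lambda c$, so that $\delta a+\delta\kappa\lambda c=\delta x$ (giving $\gcd(x,mc)$ for $\delta=1$ and $\gcd(px,mc)$ for $\delta=p$), I would rewrite
\[
p_{m,r}(\gamma)=\min_{0\leq\lambda<m}\ \frac{1}{12m}\left(\frac{\gcd^2(px,mc)}{p}-\gcd^2(x,mc)\right).
\]
Because a minimum is bounded below by a constant exactly when every summand is, it suffices to prove the asserted bound for each fixed $\lambda$.

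The first key step is to control $\gcd(px,mc)$ in terms of $g:=\gcd(x,mc)$. I would argue prime by prime: the two gcd's agree at every prime $q\neq p$, while at $q=p$ one compares $v_p(\gcd(px,mc))=\min(v_p(x)+1,\,v_p(mc))$ against $v_p(g)=\min(v_p(x),\,v_p(mc))$. A short case check on whether $v_p(x)<v_p(mc)$ or not shows that $\gcd(px,mc)$ equals either $pg$ (when $v_p(x)<v_p(mc)$) or $g$ (otherwise); crucially, no intermediate value occurs.

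The second key step, and the one that makes the estimate go through, is to bound $g$ itself. A priori $\gcd(x,mc)$ could be as large as $mc$, which would be useless; the point is to exploit $\gamma\in \mathrm{SL_2}(\mathbb{Z})$. From $ad-bc=1$ we get $\gcd(a,c)=1$, hence $\gcd(x,c)=\gcd(a+\kappa\lambda c,c)=\gcd(a,c)=1$. Coprimality of $x$ and $c$ then collapses $\gcd(x,mc)=\gcd(x,m)$, so $g\leq m$ (with the usual convention $\gcd(n,0)=|n|$ covering the degenerate case $c=0$).

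Finally I would combine the two cases for each $\lambda$. When $\gcd(px,mc)=pg$ the bracket equals $pg^2-g^2=(p-1)g^2\geq 0$, so the summand is nonnegative and the bound is immediate. When $\gcd(px,mc)=g$ the bracket equals $-\frac{p-1}{p}g^2$, giving summand $-\frac{(p-1)g^2}{12mp}$, which by $g\leq m$ is at least $-\frac{(p-1)m^2}{12mp}=-\frac{(p-1)m}{12p}$. Either way every summand meets the bound, and taking the minimum over $\lambda$ completes the argument. The main obstacle is conceptual rather than computational: recognizing that the otherwise-unbounded $\gcd(x,mc)$ must be tamed via the relation $\gcd(a,c)=1$ coming from $\gamma\in\mathrm{SL_2}(\mathbb{Z})$; once that is in hand, the $p$-adic bookkeeping and the two-case comparison are routine.
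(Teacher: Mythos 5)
Your proof is correct and takes essentially the same route as the paper's: reduce to a per-$\lambda$ estimate, use $\gcd(a,c)=1$ (from $\det\gamma=1$) to collapse $\gcd(x,mc)=\gcd(x,m)\leq m$, and compare the two gcd's through a $p$-divisibility case check. The only difference is organizational: the paper runs nested cases ($p\mid c$ versus $p\nmid c$, then $p\mid m$ versus $p\nmid m$, then a case on $\mathrm{ord}_p$ of the gcd), whereas your valuation argument packages all of this into the single dichotomy $\gcd(px,mc)\in\{g,pg\}$ --- identical content with tidier bookkeeping.
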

\begin{proof} We note that
	\begin{align*}
		\sum \limits_{\delta|M} r_{\delta}\frac{\gcd^2(\delta a + \delta \kappa \lambda c,mc)}{\delta m}&=2\frac{\gcd^2(p a + p \kappa \lambda c,mc)}{p m}
		-2\frac{\gcd^2( a + \kappa \lambda c,mc)}{ m}\\
		&=\frac{2( \gcd^{2} (p a + p \kappa \lambda c,mc)
			-p \gcd^2( a + \kappa \lambda c,mc))}{pm} 
	\end{align*}
	Since $\gamma \in \Gamma=SL_2(\Z)$, we have $ad-bc=1$ which implies that $\gcd(a,c)=1$. Thus it follows that $\gcd( a +  \kappa \lambda c,c)=1$. Therefore it is enough to prove that
	\begin{align*}
		G:={\gcd}^2(p a + p \kappa \lambda c,mc)
		-p{\gcd}^2( a + \kappa \lambda c,m)\geq -(p-1)m^2
	\end{align*}
	for each $\lambda\in\{0,1,\dots,m-1\}$. 
	
	Let $\lambda\in\{0,1,\dots,m-1\}$ be fixed. We consider the two cases $p|c$ and $p\nmid c$ separately.
	
	\noindent\textbf{Case 1: $p|c$.} We set $c_p=\frac{c}{p}$. We observe that $\gcd(a,c)=1$ implies that $\gcd(a,c_p)=1$ which in turn implies $\gcd( a +  \kappa \lambda c,c_p)=\gcd( a +  \kappa \lambda p c_p,c_p)=1$. Hence we have 
	\begin{align*}
		G&=p^2{\gcd}^2( a +  \kappa \lambda c,mc_p)
		-p{\gcd}^2( a + \kappa \lambda c,m)\\
		&=p^2{\gcd}^2( a +  \kappa \lambda c,m)
		-p{\gcd}^2( a + \kappa \lambda c,m)>0.
	\end{align*}
	\textbf{Case 2: $p\nmid c$.} In this case, we have
	\begin{align*}
		G&={\gcd}^2( p(a +  \kappa \lambda c),m)
		-p{\gcd}^2( a + \kappa \lambda c,m).
	\end{align*}
	If $p\nmid m$, then $\gcd( p(a +  \kappa \lambda c),m)=\gcd( a +  \kappa \lambda c,m)$ and thus $$G=-(p-1)\gcd^2( a +  \kappa \lambda c,m)\geq -(p-1)m^2.$$
	We now assume that $p|m$. We set $m_p=\frac{m}{p}\in \Z$. Then
	\begin{align*}
		G&=p^2{\gcd}^2( a +  \kappa \lambda c,m_p)
		-p{\gcd}^2( a + \kappa \lambda c,m).
	\end{align*}
	Let $d=\gcd( a + \kappa \lambda c,m)$. Let ord$_p(n)$ denote the highest exponent of $p$ dividing a positive integer $n$. It is clear that ord$_p(d)\leq$ord$_p(m)=$ord$_p(m_p)+1$. If ord$_p(d)\leq$ord$_p(m_p)$, then $G=d^2(p^2-p)>0$. If ord$_p(d)=$ord$_p(m_p)+1$, then $\gcd( a + \kappa \lambda c,m_p)=\frac{d}{p}$ and therefore $G=p^2(\frac{d}{p})^2-pd^2=(1-p)d^2\geq (1-p)m^2$. This completes the proof.
\end{proof}
\begin{lemma} \label{pmrpapositive}
	Let $p\geq 3$ be a prime number. Let $p\geq 3$ be a prime number. For a positive integer $g$, let $e_1, \ldots, e_g$ be positive integers and let $p_1,p_2, \ldots, p_{g}$ be prime numbers. Let  
	$$(m,M,N,r)=(p_1^{e_1} p_2^{e_2} \cdots p_g^{e_g}, p,  p^{\epsilon_p} p_1 p_2 \cdots p_{g},r=(r_1=-2,r_p=2)).$$
	Let $a=(a_{\delta})\in R(N)$ such that 
	\begin{align*}
		a_{\delta}=\begin{cases}
			2m(p-1) & \textrm{ if } \delta=p\\
			0 & \textrm{ if } \delta|N \textrm{ with } \delta\ne p.
		\end{cases}
	\end{align*}
	Let $\gamma_{\delta}= \begin{bmatrix}
		1 & 0 \\ \delta&1	\end{bmatrix}$ for $\delta|N$. Then we have
	\begin{align*}
		p_{m,r}(\gamma_{\delta})+p^{*}_{a}(\gamma_{\delta})\geqslant 0.
	\end{align*}
\end{lemma}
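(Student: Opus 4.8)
The plan is to deduce this directly from the explicit shape of the sequence $a$ together with the uniform lower bound already established in Lemma \ref{pmrpositive}. The point is that for the matrices $\gamma_\delta$ of this special lower-triangular form, the quantity $p^{*}_a(\gamma_\delta)$ can be evaluated in closed form because $a_{\delta'}$ is supported on the single divisor $\delta'=p$, so the defining sum collapses to one term. Once that term is written down, the inequality $p_{m,r}(\gamma_\delta)+p^{*}_a(\gamma_\delta)\geq 0$ will follow by adding it to the bound $p_{m,r}(\gamma_\delta)\geq -\frac{(p-1)m}{12p}$ coming from Lemma \ref{pmrpositive}, and checking that the two constants fit together.

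Concretely, I would fix a divisor $\delta\mid N$ and evaluate $p^{*}_a$ at the matrix $\gamma_\delta=\begin{bmatrix}1 & 0\\ \delta & 1\end{bmatrix}$, whose lower-left entry is $c=\delta$. Renaming the summation index to $\delta'$ to avoid clashing with the fixed $\delta$, the definition gives
\begin{align*}
	p^{*}_a(\gamma_\delta)=\frac{1}{24}\sum_{\delta'\mid N} a_{\delta'}\frac{\gcd^2(\delta',\delta)}{\delta'}
	=\frac{1}{24}\,a_p\,\frac{\gcd^2(p,\delta)}{p}
	=\frac{m(p-1)\gcd^2(p,\delta)}{12p},
\end{align*}
where I have used $a_p=2m(p-1)$ and $a_{\delta'}=0$ for $\delta'\ne p$. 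Combining this with Lemma \ref{pmrpositive}, I would then estimate
\begin{align*}
	p_{m,r}(\gamma_\delta)+p^{*}_a(\gamma_\delta)
	\geq -\frac{(p-1)m}{12p}+\frac{m(p-1)\gcd^2(p,\delta)}{12p}
	=\frac{(p-1)m}{12p}\bigl(\gcd^2(p,\delta)-1\bigr)\geq 0,
\end{align*}
the last inequality being immediate since $\gcd(p,\delta)\geq 1$ forces $\gcd^2(p,\delta)\geq 1$. This closes the argument for every $\delta\mid N$.

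There is really no serious obstacle here: the work has already been front-loaded into Lemma \ref{pmrpositive}, whose lower bound $-\frac{(p-1)m}{12p}$ is exactly the negative of the smallest possible value of $p^{*}_a(\gamma_\delta)$ (attained when $p\nmid \delta$, so that $\gcd(p,\delta)=1$). The only thing worth emphasising in the write-up is that the choice $a_p=2m(p-1)$ is not arbitrary: it is precisely tuned so that the minimal contribution of $p^{*}_a$ cancels the bound of Lemma \ref{pmrpositive}, and when $p\mid\delta$ the term $\gcd^2(p,\delta)=p^2$ only makes the sum larger. Hence the main care is bookkeeping of the constants and keeping the fixed divisor $\delta$ in $\gamma_\delta$ notationally distinct from the summation variable in the definition of $p^{*}_a$.
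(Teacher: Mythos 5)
Your proposal is correct and follows exactly the paper's own argument: evaluate $p^{*}_{a}(\gamma_\delta)$ explicitly (the sum collapses to the single term $\frac{m(p-1)\gcd^2(p,\delta)}{12p}$ since $a$ is supported at $\delta'=p$), then add the lower bound $p_{m,r}(\gamma_\delta)\geq -\frac{(p-1)m}{12p}$ from Lemma \ref{pmrpositive} to get $\frac{m(p-1)}{12p}\left(\gcd^2(p,\delta)-1\right)\geq 0$. Your additional remark that $a_p=2m(p-1)$ is tuned precisely to cancel the worst case of Lemma \ref{pmrpositive} is a nice observation, but the substance of the proof is the same as the paper's.
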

\begin{proof} 
	For $\delta|N$, using Lemma \ref{pmrpositive} and the definition of $a=(a_{\delta})$, we see that
	\begin{align*}
		p_{m,r}(\gamma_{\delta})+p^{*}_{a}(\gamma_{\delta}) &\geqslant -\frac{m(p-1)}{12p}+ \frac{m(p-1){\gcd}^2(p,\delta)}{12p}\\
		&=\frac{m(p-1)}{12p}\left( {\gcd}^2(p,\delta)- 1 \right)\\
		&\geq 0.
	\end{align*}
\end{proof}

\begin{lemma}\label{lembound}
	Let $p\geq 3$ be a prime and let $u$ be an integer. Let $(m,M,N,r,t)$ be as defined in Lemma \ref{lemdelta}. Let $t_{\min}= \min_{t^{'} \in P_{m,r}(t)} t^{'}$.	If the congruence $a_p(mn+t^{'}) \equiv 0 \pmod u$ holds for all $t^{'} \in P_{m,r}(t)$ and $ 0\leq n \leq \left \lfloor \frac{(p-1)m\left[(p+1)^{\epsilon_p}(p_1+1)(p_2+1)\cdots (p_{g}+1)-p\right]}{12} - \frac{p-1}{12m} \right \rfloor$, then $a_p(mn+t^{'}) \equiv 0 \pmod u$ holds for all $t^{'} \in P_{m,r}(t)$ and $n \geq 0$.
\end{lemma}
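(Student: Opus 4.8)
The plan is to read off Lemma \ref{lembound} as a direct specialization of the general finiteness criterion in Lemma \ref{finite check}, using the data assembled in Lemmas \ref{lemdelta} and \ref{pmrpapositive}, and then to compute the bound $\nu$ explicitly. First I would observe that with $M=p$ and $r=(r_1=-2,\,r_p=2)$ the generating function of Lemma \ref{finite check} is $\sum_n c_r(n)q^n = (q^p;q^p)_\infty^{2}/(q;q)_\infty^{2}$, which by \eqref{eq501} is exactly $\sum_n B_p(n)q^n$; hence $c_r(n)=a_p(n)$ and the two lemmas concern the same coefficient sequence. By Lemma \ref{lemdelta} the tuple $(m,M,N,r,t)$ belongs to $\Delta^{*}$, so Lemma \ref{finite check} applies once an auxiliary $a=(a_\delta)\in R(N)$ and a set of double-coset representatives are in place.

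Next I would take $a=(a_\delta)$ to be the sequence of Lemma \ref{pmrpapositive}, namely $a_p=2m(p-1)$ and $a_\delta=0$ otherwise; this is a legitimate element of $R(N)$ because $p\mid N$ whether $\epsilon_p=1$ (then $p$ is a prime factor of $N$) or $\epsilon_p=0$ (then $p=p_j$ for some $j$). Since $N=p^{\epsilon_p}p_1\cdots p_g$ is a product of distinct odd primes it is square-free, so Lemma \ref{square-free} furnishes $\{\gamma_\delta=\begin{bmatrix}1&0\\\delta&1\end{bmatrix}:\delta\mid N\}$ as a complete set of representatives of $\Gamma_0(N)\backslash\Gamma/\Gamma_\infty$, and the required holomorphy condition $p_{m,r}(\gamma_\delta)+p^{*}_{a}(\gamma_\delta)\geq 0$ is precisely Lemma \ref{pmrpapositive}. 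At this point every hypothesis of Lemma \ref{finite check} has been verified.

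The substantive step is the evaluation of $\nu$. Using $\sum_{\delta\mid M}r_\delta=0$, $\sum_{\delta\mid N}a_\delta=2m(p-1)$, $\sum_{\delta\mid N}\delta a_\delta=2mp(p-1)$ and $\sum_{\delta\mid M}\delta r_\delta=2(p-1)$, together with the index identity $[\Gamma:\Gamma_0(N)]=\prod_{q\mid N}(q+1)=(p+1)^{\epsilon_p}(p_1+1)\cdots(p_g+1)$ (which holds because $N$ is square-free), substitution into the definition of $\nu$ gives
\begin{align*}
\nu=\frac{(p-1)m\big[(p+1)^{\epsilon_p}(p_1+1)\cdots(p_g+1)-p\big]}{12}-\frac{p-1}{12m}-\frac{t_{\min}}{m}.
\end{align*}
Thus the quantity appearing in the statement of Lemma \ref{lembound} is exactly $\nu+t_{\min}/m$.

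Finally, since every $t'\in P_{m,r}(t)$ lies in $\{0,1,\dots,m-1\}$ we have $t_{\min}\geq 0$, so the stated bound is $\geq\nu$ and its floor is $\geq\lfloor\nu\rfloor$. Checking the congruence for all $n$ up to $\lfloor\nu+t_{\min}/m\rfloor$ therefore a fortiori covers the range $0\leq n\leq\lfloor\nu\rfloor$ demanded by Lemma \ref{finite check}, and the conclusion of that lemma gives the result for all $n\geq 0$. I expect the only delicate points to be the bookkeeping in the computation of $\nu$ — in particular getting the index formula correct for both values of $\epsilon_p$ — and confirming that dropping $t_{\min}$ enlarges, rather than shrinks, the verification range; all remaining input is quoted verbatim from the preceding lemmas.
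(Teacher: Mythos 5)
Your proof is correct and follows essentially the same route as the paper: specialize Lemma \ref{finite check} to the tuple of Lemma \ref{lemdelta}, take the double coset representatives from Lemma \ref{square-free} and the positivity $p_{m,r}(\gamma_\delta)+p^{*}_{a}(\gamma_\delta)\geq 0$ from Lemma \ref{pmrpapositive}, compute $\nu$ via the index formula $[\Gamma:\Gamma_0(N)]=(p+1)^{\epsilon_p}(p_1+1)\cdots(p_g+1)$, and discard $t_{\min}/m\geq 0$. In fact your write-up is more internally consistent than the paper's: the paper's proof asserts it takes $a_\delta=0$ for all $\delta|N$ (which would make $\nu$ negative and would require the false inequality $p_{m,r}(\gamma_1)\geq 0$), yet its displayed evaluation of $\nu$ uses $\sum_{\delta|N}a_\delta=2m(p-1)$ and $\sum_{\delta|N}\delta a_\delta=2mp(p-1)$, i.e.\ exactly the nonzero $a$ of Lemma \ref{pmrpapositive} that you correctly use throughout.
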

\begin{proof}
	It is enough to show that the assumptions of Lemma \ref{finite check} are satisfied and that the upper bound in Lemma \ref{finite check} is less than or equal to $\left \lfloor \frac{(p-1)m\left[(p+1)^{\epsilon_p}(p_1+1)(p_2+1)\cdots (p_{g}+1)-p\right]}{12} - \frac{p-1}{12m} \right \rfloor$. For $\delta|N$, we set $\gamma_{\delta}=\begin{bmatrix} 1 &0\\ \delta&1 \end{bmatrix}$. Since $\epsilon_2= 0$ or $1$, $N$ or $\frac{N}{2 }$ is a square-free integer. Thus Lemma \ref{square-free} implies that $\{\gamma_{\delta}: \delta|N\}$ forms a complete set of double coset representatives of $\Gamma_0(N)\backslash \Gamma / \Gamma_{\infty}$. Lemma \ref{pmrpapositive} implies that $p_{m,r}(\gamma_{\delta})\geq 0$ for each $\delta|N$.	Therefore we take $a_{\delta}=0$ for each $\delta|N$, that is, $a=(0,0, \ldots, 0)\in R(N)$ and hence $p_{m,r}(\gamma_{\delta}) +p_{a}^{*}(\gamma_{\delta})\geq0$ for each $\delta|N$. Since $t_{\min}\geq 0$, we have
	\begin{align*}
		\lfloor \nu \rfloor&= \left \lfloor \frac{1}{24}\left(2(p-1)m[\Gamma:\Gamma_0(N)] - 2p(p-1)m \right) - \frac{p-1}{12m} -\frac{t_{\min}}{m} \right \rfloor\\
		& \leqslant \left \lfloor \frac{(p-1)m}{12}\left\{[\Gamma:\Gamma_0(N)] - p\right\} -\frac{p-1}{12m} \right \rfloor\\
		&= \left \lfloor \frac{(p-1)m\left[(p+1)^{\epsilon_p}(p_1+1)(p_2+1)\cdots (p_{g}+1)-p\right]}{12} - \frac{p-1}{12m} \right \rfloor.
	\end{align*}
\end{proof}

\begin{proof}[Proof of Theorem \ref{mainthmRaduSellers}]
	We get from \eqref{eq501} that
	\begin{align*}
		\sum\limits_{n=0}^{\infty} B_p(n) q^n= \frac{(q^p;q^p)^{2}_{\infty}}{(q;q)^2_{\infty}}.
	\end{align*}
	Let $(m,M,N,r,t)=(p_1^{e_1} p_2^{e_2} \cdots p_g^{e_g},p, p^{\epsilon_p} p_1 p_2 \cdots p_{g},r=(r_1=-2,r_p=2),t)$ be such that $t \in \{0,1,\ldots, m-1\}$ and $A_t|N$. Then Lemma \ref{lemdelta} implies $(m,M,N,r,t) \in \Delta^{*}$. Since $\sum_{\delta|M}\delta r_{\delta}=2(p-1)$, we see that $P_{m,r}(t)=P(t)$. Now the assertion follows from Lemma \ref{lembound}.
\end{proof}

\section{Conclusion}
	By Radu and Seller's method, for a prime $p\geq3$, we obtain an algorithm in Theorem \ref{mainthmRaduSellers} for congruences of the type $B_p(n) \equiv 0 \pmod u$ when $m=p_1^{e_1} p_2^{e_2}\cdots p_g^{e_g}$ where $p_i\geq 5$ are prime numbers and $e_1, e_2, \cdots, e_g$ are positive integers. Further, we obtain several infinite families of congruences for $B_3(n)$ and $B_5(n)$ by using Hecke eigen forms and Newman Identity. Since we have obtained the density results for divisibility of $B_{\ell}(n)$ by powers of primes dividing $\ell$,
	it will be interesting to see more algebraic results in the same direction.

%
	\noindent{\bf Data availability statement:} There is no data associated to our manuscript.
	
	\noindent{\bf Conflict of interest:} There is no conflict of interest between the two authors.
	
	\noindent{\bf Acknowledgement.} The second author is grateful to the National Board for Higher Mathematics, Department of Atomic Energy, India for postdoctoral fellowship.

\end{document}